   \def\sH{{\mathfrak H}}   
   \def\sK{{\mathfrak K}}   
   \def\sN{{\mathfrak N}}
\def\st{{\mathfrak t}}
\def\ss{{\mathfrak s}}
\def\sr{{\mathfrak r}}
      \def\dC{{\mathbb C}}
      \def\dR{{\mathbb R}}
      \def\cR{{\mathcal R}}
   \def\cW{{\mathcal W}}
\def\bB{\mathbf{B}}
\def\bL{\mathbf{L}}
\def\bF{\mathbf{F}}
\def\bLR{{\mathbf{L}}}
\def\wt#1{{{\widetilde #1} }}
\def\wh#1{{{\widehat #1} }}
\def\bm\chi{\mbox{\boldmath$\chi$}}
\def\half{{\frac{1}{2}}}
\def\ker{{\rm ker\,}}
\def\cker{{\rm \overline{ker}\,}}
\def\ran{{\rm ran\,}}
\def\cran{{\rm \overline{ran}\,}}
\def\dom{{\rm dom\,}}
\def\mul{{\rm mul\,}}
\def\cmul{{\rm \overline{mul}\,}}
\def\cdom{{\rm \overline{dom}\,}}
\def\clos{{\rm clos\,}}
\let\xker=\ker \def\ker{{\xker\,}}
\def\uphar{{\upharpoonright\,}}
\DeclareMathOperator{\hplus}{\, \widehat + \,}
\newtheorem{theorem}{Theorem}[section]
\newtheorem{proposition}[theorem]{Proposition}
\newtheorem{corollary}[theorem]{Corollary}
\newtheorem{lemma}[theorem]{Lemma}
\newtheorem{definition}[theorem]{Definition}
\theoremstyle{definition}
\newtheorem{example}[theorem]{Example}
\newtheorem{remark}[theorem]{Remark}
\numberwithin{equation}{section}
\begin{document}

\title[Friedrichs and Kre\u{\i}n type extensions]
{Friedrichs and Kre\u{\i}n type extensions\\ in terms of representing maps}
\author[S.~Hassi]{S.~Hassi}
\author[H.S.V.~de~Snoo]{H.S.V.~de~Snoo}

\address{Department of Mathematics and Statistics \\
University of Vaasa \\
P.O. Box 700, 65101 Vaasa \\
Finland} \email{sha@uwasa.fi}

\address{Johann Bernoulli Institute for Mathematics and Computer Science\\
University of Groningen \\
P.O. Box 407, 9700 AK Groningen \\
Nederland}
\email{h.s.v.de.snoo@rug.nl}

%\date{}
\thanks{}
%
%%\translator{}
%

\dedicatory{Dedicated to Professor~Fedor Sukochev}

\date{\today}%; Filename: \jobname.}
\keywords{Sesquilinear form, representing map, Friedrichs extension, Kre\u{\i}n type extension,
extremal extension}
\subjclass{Primary 47A07, 47B25, 47A67; Secondary 47A06, 47B65}

%
%\keywords{Sectorial relation, Friedrichs extension, Kre\u{\i}n
%extension, extremal maximal sectorial extension} \subjclass{Primary
%47A57, 47B25; Secondary 47A55, 47B65}

\begin{abstract}
A semibounded operator or relation $S$ in a Hilbert space
with lower bound $m \in \dR$ has a symmetric extension
$S_{\rm f}=S \hplus (\{0\} \times \mul S^*)$,
the weak Friedrichs extension of $S$,
and a selfadjoint extension $S_{\rm F}$, the Friedrichs extension of $S$,
that satisfy $S \subset S_{\rm f} \subset S_{\rm F}$.
The Friedrichs extension $S_{\rm F}$ has lower bound $\gamma$
and it is the largest semibounded  selfadjoint extension of $S$.
Likewise, for each $c \leq \gamma$, the relation $S$ has a
weak Kre\u{\i}n type extension $S_{{\rm k},c}=S \hplus (\ker (S^*-c) \times \{0\})$
and Kre\u{\i}n type extension $S_{{\rm K},c}$ of $S$,
that satisfy $S \subset S_{{\rm k},c} \subset S_{{\rm K},c}$.
The Kre\u{\i}n type extension $S_{{\rm K},c}$ has lower bound $c$
and it is the smallest semibounded selfadjoint
extension of $S$ which is bounded below by $c$.
In this paper these special extensions and, more generally, all extremal
extensions of $S$ are constructed in terms of
a representing map for $\st(S)-c$ and their properties are being considered.
\end{abstract}

\maketitle

\section{Introduction}%\label{intro}

Let $S \in \bL(\sH)$ be linear operator or relation in a Hilbert space $\sH$
which has lower bound $m(S)=\gamma \in \dR$, i.e.,  $\gamma$
is the supremum of all $c \in \dR$ for which
\begin{equation}\label{semi}
(\varphi', \varphi) \geq c (\varphi, \varphi),  \quad \{\varphi, \varphi' \} \in S.
\end{equation}
In what follows the notation $\bL(\sH, \sK)$ stands for the class of all linear relations
from $\sH$ to the Hilbert space $\sK$; this is abbreviated to $\bL(\sH)$ when $\sK=\sH$.
It is clear from \eqref{semi} that a semibounded relation is automatically symmetric.
Among all semibounded selfadjoint extensions $H \in \bL(\sH)$ of $S$ there exists a largest
semibounded selfadjoint extension $S_{\rm F} \in \bL(\sH)$, the Friedrichs extension,
with the same lower bound as $S$,
such that $H \leq S_{\rm F}$.
Likewise, for any choice of $c \leq \gamma$, there exists
among all semibounded selfadjoint extensions $H \in \bL(\sH)$ of $S$
a smallest semibounded selfadjoint extension $S_{{\rm K},c} \in \bL(\sH)$
the so-called Kre\u{\i}n type extension, with lower bound $c$,
such that for any selfadjoint extension $H \in \bL(\sH)$ of $S$ with $H \geq c$
one has  $H \geq S_{{\rm K},c}$.
Thus, for any semibounded selfadjoint extension
$H$ of $S$ one has the equivalence
\begin{equation}\label{coddin0}
 c \leq H \quad \Leftrightarrow \quad S_{{\rm K},c} \leq H \leq S_{\rm F}.
\end{equation}
For the ordering of semibounded selfadjoint operators
and relations as in \eqref{coddin0}, see \cite{BHS}.
For each $c \leq \gamma$ the Friedrichs extension $S_{\rm F} \in \bL(\sH)$
and the Kre\u{\i}n type extension $S_{{\rm K},c} \in \bL(\sH)$ are connected
as linear relations by the identity
\begin{equation}\label{coddin}
 S_{{\rm K},c}= c+(((S-c)^{-1})_{\rm F})^{-1}.
\end{equation}
In addition, there are also weak versions of the Friedrichs
and Kre\u{\i}n type extensions, which are connected as in \eqref{coddin},
and they play an important role in applications.
Note that for $\gamma \geq 0$ and $c=0$ these results (in the context of
densely defined linear operators) go back to
J. von Neumann \cite{Neu} and M.G. Kre\u{\i}n \cite{Kr1, Kr2}; and then
one usually speaks of the Kre\u{\i}n-von Neumann extension
instead of the Kre\u{\i}n type extension.
For a recent treatment of all the above extensions
in the context of linear relations, see \cite{BHS},
but one may also consult  \cite{HMS} for various other approaches.

\medskip

The purpose of the present paper is to view the construction
of the above semibounded selfadjoint extensions
systematically from the point of view of semibounded sesquilinear forms.
In what follows $\bF(\sH)$ stands for the set of all sesquilinear forms
whose domains belong to the Hilbert space $\sH$.
Let $\st \in \bF(\sH)$ be any semibounded form with lower bound $\gamma \in \dR$.
For each $c \leq \gamma$ there is a linear operator $Q_c \in \bL(\sH, \sK)$,
where $\sK$ is a Hilbert space, the so-called representing map, such that
the nonnegative form $\st-c$ can be written as
\[
 (\st-c)[\varphi, \psi]=(Q_c \varphi, Q_c \psi), \quad \varphi, \psi \in \dom \st(S)=\dom Q_c.
\]
The specific choice of $Q_c$ does not play a central role in studying the form $\st$.
The properties of representing maps for general nonnegative forms have been
studied in detail in \cite{HS2023a, HS2023b}; see also \cite{Szy87}.
Now return to the semibounded linear relation
$S \in \bL(\sH)$ and assume that it has lower bound $\gamma \in \dR$.
Then there is a natural well-defined
sesquilinear form $\st(S) \in \bF(\sH)$,
defined by the left-hand side of \eqref{semi}, with $\dom \st(S)=\dom S$
and such that $\st(S)$ has the same lower bound as $S$.  In general,
this form $\st(S)$ is not closed; see, for instance, in \cite{C}, \cite{DS2}.
By using the specific properties of the representing map for
the form $\st(S)-c$, $c \leq \gamma$, it follows that there exists
another semibounded form $\ss(S) \in \bF(\sH)$ which is closed with
lower bound $c$, so that
\[
 \st(S) \subset \ss(S).
\]
In particular, this shows that the original form $\st(S)$ is closable.
The present paper gives an overview of the construction of
the semibounded selfadjoint extensions of the semibounded relation $S \in \bL(\sH)$
by means of appropriate semibounded forms.
The Friedrichs extension is constructed via the form $\st(S)$,
the Kre\u{\i}n type extensions via the form $\ss(S)$,
and the closely related extremal extensions via intermediate forms.
All these constructions are based on the representing map of $\st(S)-c$.
The arguments are much influenced by the work of Sebesty\'en and Stochel;
see \cite{PS, SS, SSi, ST2021, ST2022}.  It should be stressed that the treatment
in this paper is general in the sense that the possibly multivalued operator $S$
need not be closed or densely defined.

 \medskip

The contents of the paper are now described.
The forms $\st(S)$ and $\ss(S)$, induced by the semibounded relation $S \in \bL(\sH)$,
are constructed in Section \ref{sect2}, where also their relevant properties are discussed.
In Section \ref{symm} there is a further discussion of the forms $\st(S)$ and $\ss(S)$;
in particular, some attention will be paid to the (non)uniqueness of their representing maps.
Here also the connection with the construction of Sebesty\'en and Stochel
is discussed; see  Remark \ref{sebsto}.
The Friedrichs extension  $S_{\rm F}$ of $S$ and its weak version are treated
in Section \ref{Fried}.
A similar reasoning involving the corresponding inverse $(S-c)^{-1}$
with $c \leq \gamma$ is given Section \ref{dual}.
This leads to analogous extensions of $(S-c)^{-1}$ in terms of
appropriate representing maps and companion relations.
The Kre\u{\i}n type extension $S_{{\rm K},c}$ and its weak version
are introduced in Section \ref{Kreintype}. The identity in \eqref{coddin}
is a straightforward consequence of the approach
via the representing maps for the forms associated with $S$ and $(S-c)^{-1}$;
cf. Section \ref{dual}.
As an application of the above constructions,
also the notion of extremal extensions is treated in this context,
see Section \ref{eextr}. By way of illustration  Section \ref{ortho}
is devoted to symmetric relations whose domain and range are
orthogonal; see \cite{HLS, RC}.
In Section \ref{converse} attention is paid to the representation in terms of a
closable representing map of a symmetric relation generated by a
semibounded form which itself is not necessarily closed.

\section{Semibounded relations and representing maps}\label{sect2}

In this section representing maps are introduced for semibounded forms
which are generated by semibounded relations.
Also a couple of their key properties are derived for later purposes.

\begin{definition}\label{defsss}
Let $S \in \bL(\sH)$  be a semibounded relation
with lower bound $\gamma \in \dR$.
Then the semibounded sesquilinear form $\st(S) \in \bF(\sH)$
is defined by
\begin{equation}\label{s-ip}
 \st(S)[\varphi , \psi] =   (\varphi' , \psi)=(\varphi, \psi'),
 \quad \{\varphi,\varphi'\}, \, \{\psi,\psi'\} \in S,
\end{equation}
so that $\dom \st(S)=\dom S$.
\end{definition}

Since $\mul S \subset \mul S^{*}=(\dom S)^{\perp}$
the form $\st(S)$ is well-defined and, in fact,
it is semibounded with lower bound $\gamma \in \dR$.

\medskip

The new ingredient in the present treatment is the notion of the representing map.
Let $c \leq \gamma$, then for the form $\st(S)-c$
there exists a so-called \textit{representing map} $Q_c$ such that
\begin{equation}\label{forrep}
\st(S)[\varphi, \psi]=c(\varphi, \psi)+(Q_c \varphi, Q_c \psi),
\quad \varphi, \psi \in \dom \st(S)=\dom S=\dom Q_c,
\end{equation}
where $Q_c$ is a linear operator in $\bL(\sH, \sK_c)$ with a Hilbert space $\sK_c$;
see \cite[Lemma 4.1]{HS2023seq}.
Due to the definition of $\st(S)$ in \eqref{s-ip} the
representing map $Q_c$ has a so-called
 \textit{companion relation}  $J_c \in \bL(\sK_c, \sH)$, which is introduced by
\begin{equation}\label{s-J}
J_c=\big\{\, \{ Q_c \varphi, \varphi'-c \varphi\} :\, \{\varphi, \varphi'\} \in S  \,\big\}.
\end{equation}
The operator $Q_c$ and the relation $J_c$ form a dual pair in the sense that
\begin{equation}\label{s-JQ}
 Q_c \subset J_c^{\ast} \quad \mbox{or, equivalently,}
 \quad J_c \subset Q_c^*,
\end{equation}
as follows from a combination of the identities \eqref{s-ip} and \eqref{forrep}:
\begin{equation}\label{nuk}
(\varphi' -c \varphi, \psi)=(\varphi, \psi'-c \psi)=(Q_c \varphi, Q_c \psi),
\quad \{\varphi, \varphi'\}, \{\psi, \psi'\} \in S.
\end{equation}
The identity \eqref{s-J} shows that the relation $c+J_cQ_c$ is an extension of $S$:
\begin{equation}\label{s-JQQ}
 S \subset c+J_cQ_c.
\end{equation}
Taking into account \eqref{s-JQ} one gets immediately two nonnegative extensions for $S-c$:
\[%begin{equation}\label{JJQQ}
 S-c \subset Q_c^*Q_c \quad \text{and} \quad S-c \subset J_cJ_c^*.
\]%end{equation}
The adjoint $J_c^* \in \bL(\sH, \sK_c)$ of the companion relation $J_c$
is a closed relation by definition.
Let $\pi$ be the orthogonal projection from $\sK_c$ onto
\begin{equation}\label{jjqc}
\mul J_c^*=(\dom J_c)^\perp=(\ran Q_c)^\perp.
\end{equation}
Then it follows from \eqref{s-JQ} and \eqref{jjqc} that,
with the orthogonal operator part $(J_c^*)_{\rm reg}$,
one also has
\begin{equation}\label{foot}
 Q_c \subset (J_c^*)_{\rm reg}, \quad \mbox{where} \quad (J_c^*)_{\rm reg}= (I-\pi) \,J_c^*.
\end{equation}

\begin{definition}\label{ssDef}
Let $S \in \bL(\sH)$  be a semibounded relation
with lower bound $\gamma \in \dR$ and let $c \leq \gamma$.
Let $Q_c \in \bL(\sH, \sK_c)$ be a representing map for $\st(S)-c$
with companion relation $J_c \in \bL(\sK_c, \sH)$.
Then the sesquilinear form $\ss(S) \in \bF(\sH)$ is defined by
\begin{equation}\label{ss}
 \ss(S)[\varphi , \psi] = c(\varphi, \psi)+((J_c^*)_{\rm reg} \varphi, (J_c^*)_{\rm reg} \psi),
 \quad \varphi, \psi \in \dom \ss(S)=\dom J_c^*.
\end{equation}
Consequently, the form $\ss(S)$ is semibounded and closed.
\end{definition}

By definition $(J_c^*)_{\rm reg}$ is a closed operator
and it is clear from \eqref{foot} that
\[
 (Q_c)^{**}  \subset (J_c^*)_{\rm reg}.
\]
Hence, it follows from \eqref{forrep} and \eqref{ss} that
the closed form $\ss(S)$ extends the form $\st(S)$.
Therefore the form $\st(S)$ is closable; the representation
of the closure is immediate from \eqref{forrep};
cf. \cite{HS2023b}.
Observe, that the form $\ss(S)$ is bounded below by $c$.
In fact, $\ss(S)$ has lower bound $c$:
For $c<\gamma$ this follows from
\[%begin{equation}\label{kerJ*}
 \ker (J_c^*)_{\rm reg}=\ker J_c^* =(\ran J_c)^\perp=(\ran (S-c))^\perp=\ker (S^*-c)\neq \{0\},
\]%end{equation}
while for $c=\gamma$ this is holds by the inclusion $\st(S) \subset \ss(S)$.
These observations are summarized in the next proposition.

\begin{proposition}\label{cloclo}
The closed form $\ss(S) \in \bF(\sH)$ defined  in \eqref{ss}
extends the form $\st(S) \in \bF(\sH)$ in \eqref{s-ip}
and its lower bound is $c$.
Consequently, the form $\st(S)$ is closable and
\[%begin{equation}\label{sss}
 \st(S) \subset \bar{\st}(S) \subset \ss(S).
\]%end{equation}
Furthermore, the closure $\bar{\st}(s) \in \bF(\sH)$ is given by
\begin{equation}\label{cst}
{\bar \st}(S)[\varphi, \psi]=c(\varphi, \psi)+(Q_c^{**} \varphi, Q_c^{**} \psi),
\quad \varphi, \psi \in \dom {\bar \st}(S)=\dom Q_c^{**}.
\end{equation}
\end{proposition}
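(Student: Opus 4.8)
The plan is to treat the four assertions in the order they appear: the extension $\st(S)\subset\ss(S)$, closability of $\st(S)$, the value of the lower bound, and the explicit closure formula \eqref{cst}. Since the preceding discussion has already supplied the identities \eqref{forrep}, \eqref{ss}, \eqref{foot} together with the inclusion $Q_c\subset(J_c^*)_{\rm reg}$, the proof is essentially a matter of assembling these pieces; the one genuinely substantive point will be the sharpness of the lower bound.

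First I would establish the extension. By \eqref{foot} one has $Q_c\subset(J_c^*)_{\rm reg}$, so $\dom Q_c\subset\dom(J_c^*)_{\rm reg}=\dom J_c^*=\dom\ss(S)$, and on $\dom Q_c=\dom\st(S)$ the two maps agree, $(J_c^*)_{\rm reg}\varphi=Q_c\varphi$. Inserting this into \eqref{ss} and comparing with \eqref{forrep} gives $\ss(S)[\varphi,\psi]=\st(S)[\varphi,\psi]$ for all $\varphi,\psi\in\dom\st(S)$, hence $\st(S)\subset\ss(S)$. Because $\ss(S)$ is closed by Definition \ref{ssDef}, the form $\st(S)$ is dominated by a closed form and is therefore closable. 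For the closure, I would use that $Q_c^{**}$ is the operator closure of $Q_c$: the form on the right of \eqref{cst} is then closed and extends $\st(S)$, so by minimality of the form closure it equals $\bar\st(S)$; the inclusion $Q_c^{**}\subset(J_c^*)_{\rm reg}$ additionally yields $\bar\st(S)\subset\ss(S)$, completing the chain $\st(S)\subset\bar\st(S)\subset\ss(S)$.

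It remains to determine the lower bound. The inequality $\ss(S)\geq c$ is immediate from the nonnegativity of $((J_c^*)_{\rm reg}\varphi,(J_c^*)_{\rm reg}\varphi)$ in \eqref{ss}, so only sharpness has to be shown, and here I would split into two cases. For $c<\gamma$ I would compute
\[
 \ker(J_c^*)_{\rm reg}=\ker J_c^*=(\ran J_c)^\perp=(\ran(S-c))^\perp=\ker(S^*-c),
\]
where $\ran J_c=\ran(S-c)$ follows directly from the definition \eqref{s-J} of the companion relation; any nonzero $\varphi$ in this kernel then satisfies $\ss(S)[\varphi,\varphi]=c(\varphi,\varphi)$, forcing the lower bound to be exactly $c$. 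For $c=\gamma$ the inclusion $\st(S)\subset\ss(S)$ already established shows that the lower bound of $\ss(S)$ cannot exceed that of $\st(S)$, namely $\gamma=c$, which combined with $\ss(S)\geq c$ gives the claim.

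The main obstacle is the sharpness of the lower bound when $c<\gamma$, that is, the nontriviality $\ker(S^*-c)\neq\{0\}$ appearing at the end of the displayed chain. The intermediate equalities are routine: $\ran J_c=\ran(S-c)$ is read off from \eqref{s-J}, and $(\ran(S-c))^\perp=\ker(S^*-c)$ is the standard duality between range and adjoint kernel for relations. It is precisely the strict inequality $c<\gamma$ that guarantees the defect space is nonzero and thereby prevents the lower bound of $\ss(S)$ from rising above $c$. The identification of the closure with $Q_c^{**}$ likewise relies on the general closure theory for forms represented by a linear map, for which I would invoke \cite{HS2023b}.
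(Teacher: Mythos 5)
Your proposal is correct and follows essentially the same route as the paper: the extension property via $Q_c\subset(J_c^*)_{\rm reg}$ (hence $Q_c^{**}\subset(J_c^*)_{\rm reg}$), closability and the formula \eqref{cst} from the representing-map closure theory of \cite{HS2023b}, and the lower bound via $\ker (J_c^*)_{\rm reg}=\ker(S^*-c)$ for $c<\gamma$ together with the inclusion $\st(S)\subset\ss(S)$ for $c=\gamma$. The paper's argument is exactly the discussion preceding the proposition and matches yours step for step, including the bare assertion $\ker(S^*-c)\neq\{0\}$ for $c<\gamma$.
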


As a corollary of Proposition \ref{cloclo} it is shown that the formula \eqref{s-ip}
in Definition \ref{defsss} holds under more general circumstances.

\begin{corollary}\label{lemmon}
Assume that  $\{\varphi, \varphi'\} \in S^{*}$
and that $\varphi \in \dom \bar{\st}(S)$. Then
\begin{equation}\label{newww}
\bar{\st}(S)[\varphi, \psi] =(\varphi', \psi)
\quad \mbox{for all}
 \quad \psi \in \dom \bar{\st}(S).
\end{equation}
\end{corollary}

\begin{proof}
Since $Q_c$ is closable with $\dom \bar \st(S)=\dom Q_c^{**}$ and
$Q_c^{**}\subset J_c^*$, one has
\begin{equation}\label{new-}
 (\varphi, \psi'-c \psi)=(Q_c^{**} \varphi, Q_c \psi),
\quad \varphi \in \dom Q_c^{**}, \,\, \{\psi, \psi'\} \in S.
\end{equation}
Now assume that in \eqref{new-} one also has  $\{\varphi, \varphi'\}  \in S^{*}$.
Then for all $\{\psi, \psi'\} \in S$ one sees by definition
$(\varphi', \psi)=(\varphi, \psi')$. Thus \eqref{new-}, with $\dom S=\dom Q_c$, reads as
\begin{equation}\label{newwww}
 (\varphi'-c \varphi, \psi)=(Q_c^{**} \varphi, Q_c \psi),
\quad \{\varphi, \varphi'\} \in S^*, \,\varphi \in \dom Q_c^{**}, \,\, \psi  \in \dom Q_c.
\end{equation}
Taking limits $\{\psi_n,Q_c \psi_n\}\to \{\psi,Q_c^{**}\psi\}$ in \eqref{newwww} one obtains
\[%begin{equation*}\label{new--}
(\varphi'-c \varphi, \psi)=(Q_c^{**} \varphi, Q_c^{**} \psi),
\quad \{\varphi, \varphi'\} \in S^*, \;\varphi,\, \psi \in \dom Q_c^{**},
\]%end{equation*}
which leads to \eqref{newww}.
\end{proof}

The next result concerns the description of the (form) domain of $\bar{\st}(S)$
which is easily derived from the associated representing map $Q_c$.
This type of description goes back to Freudenthal \cite{F}.
There is also a useful description of $\ran Q_c^*$ by means of $S$
which determines the form $\st(S)$; cf.  \cite{H, HSS2010, Seb83}.
The contents of this theorem will be interpreted in terms
of Friedrichs extensions later in Section~\ref{Fried}.

\begin{theorem}\label{kerFr}
Let $S \in \bL(\sH)$ be semibounded with lower bound $\gamma \in \dR$,
let $c \leq \gamma$, and let $Q_c\in\bL(\sH,\sK')$ be a representing map for $\st(S)-c$.
Then $\varphi\in \dom {\bar \st}(S)$
if and only if there is a sequence $\{\varphi_n,\varphi_n'\}\in S$ such that
\[
  \varphi_n \to \varphi \quad \text{and} \quad (\varphi_n'-\varphi_m',\varphi_n-\varphi_m)\to 0.
\]
In addition, $\varphi\in \ker ({\bar \st}(S)-c)$
if and only if there exists
$\{\varphi_n,\varphi_n'\}\in S$ such that
\[%begin{equation}\label{kerQ**}
 \varphi_n \to \varphi \quad \text{and}\quad (\varphi_n'-c\varphi_n,\varphi_n)\to 0.
\]%end{equation}
Furthermore, $\varphi\in \ran Q_c^*$ if and only if there exists $C_\varphi <\infty$ such that
\begin{equation}\label{halfFr}
  |(\psi, \varphi)|^2 \leq  C_\varphi(\psi' , \psi) \quad \text{ for all }\; \{\psi,\psi'\}\in S-c.
\end{equation}
\end{theorem}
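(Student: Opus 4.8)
The plan is to prove the three equivalences separately, each by translating a statement about the closed form $\bar\st(S)$ or about the range of $Q_c^*$ into a statement about sequences in $S$ via the representing map. The unifying principle is Proposition \ref{cloclo}: the closure $\bar\st(S)$ is represented by $Q_c^{**}$, and $\dom\bar\st(S)=\dom Q_c^{**}$, so membership questions about the form domain become questions about the domain of the double adjoint (equivalently, the closure) of the operator $Q_c$.

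For the first equivalence, I would use the standard characterization of $\dom Q_c^{**}$: since $Q_c^{**}=\overline{Q_c}$, a vector $\varphi$ lies in $\dom Q_c^{**}$ precisely when there is a sequence $\varphi_n\in\dom Q_c=\dom S$ with $\varphi_n\to\varphi$ and $Q_c\varphi_n$ convergent in $\sK'$. Choosing $\{\varphi_n,\varphi_n'\}\in S$ with domain entry $\varphi_n$, the convergence of $Q_c\varphi_n$ is a Cauchy condition, and by \eqref{forrep} (or directly by \eqref{nuk}) we have $\|Q_c\varphi_n-Q_c\varphi_m\|^2=(\varphi_n'-\varphi_m'-c(\varphi_n-\varphi_m),\varphi_n-\varphi_m)=(\varphi_n'-\varphi_m',\varphi_n-\varphi_m)-c\|\varphi_n-\varphi_m\|^2$. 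Since $\varphi_n\to\varphi$ is already Cauchy, the term $c\|\varphi_n-\varphi_m\|^2\to 0$, so $Q_c\varphi_n$ is Cauchy if and only if $(\varphi_n'-\varphi_m',\varphi_n-\varphi_m)\to 0$. This gives the stated condition directly. The second equivalence is the same argument specialized: $\varphi\in\ker(\bar\st(S)-c)$ means $\bar\st(S)[\varphi,\varphi]=c\|\varphi\|^2$, i.e. $\|Q_c^{**}\varphi\|^2=0$ by \eqref{cst}; using $Q_c^{**}\varphi=\lim Q_c\varphi_n$ and the identity $\|Q_c\varphi_n\|^2=(\varphi_n'-c\varphi_n,\varphi_n)$ from \eqref{forrep}, one extracts a sequence with $\varphi_n\to\varphi$ and $(\varphi_n'-c\varphi_n,\varphi_n)\to 0$, and conversely such a sequence forces $Q_c^{**}\varphi=0$.

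For the third equivalence I would characterize $\ran Q_c^*$ by the closed-graph/bounded-functional principle. The inequality \eqref{halfFr} says exactly that the linear functional $\psi\mapsto(\psi,\varphi)$ on $\dom S$ is bounded with respect to the seminorm $\psi\mapsto(\psi'-c\psi,\psi)^{1/2}=\|Q_c\psi\|$, i.e. there is $C_\varphi$ with $|(\psi,\varphi)|^2\le C_\varphi\|Q_c\psi\|^2$ for all $\psi\in\dom Q_c$. Such a bounded functional factors through $\ran Q_c$: there exists $\eta\in\overline{\ran Q_c}$ with $(\psi,\varphi)=(Q_c\psi,\eta)$ for all $\psi$, which is precisely the statement $\varphi\in\dom Q_c^*$ with $Q_c^*\eta=\varphi$, hence $\varphi\in\ran Q_c^*$. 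Conversely, if $\varphi=Q_c^*\eta$ then $(\psi,\varphi)=(Q_c\psi,\eta)$ and Cauchy--Schwarz gives \eqref{halfFr} with $C_\varphi=\|\eta\|^2$.

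The main obstacle is bookkeeping the multivalued parts of $S$, since $S$ need not be an operator. The clean resolution is that $\st(S)$ and $Q_c$ depend only on the domain components $\varphi_n$ (the form is well-defined modulo $\mul S\subset(\dom S)^\perp$, as noted after Definition \ref{defsss}), so the representing map $Q_c$ is a genuine operator on $\dom S$ and all the sequence arguments proceed as in the operator case. The only point requiring care is that in the first two equivalences one must select, for a given $\varphi_n\in\dom S$, \emph{some} admissible $\varphi_n'$ with $\{\varphi_n,\varphi_n'\}\in S$; since the quantities $(\varphi_n'-\varphi_m',\varphi_n-\varphi_m)$ and $(\varphi_n'-c\varphi_n,\varphi_n)$ are independent of the choice of representative (again by orthogonality of $\mul S$ to $\dom S$), this selection is harmless.
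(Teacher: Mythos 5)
Your proposal is correct and follows essentially the same route as the paper: reduce the first two statements to $\dom Q_c^{**}$ and $\ker Q_c^{**}$ via Proposition \ref{cloclo} and the identity \eqref{nuk}, and reduce the third to the standard boundedness characterization of $\ran Q_c^*$ (which the paper cites rather than reproves). The only blemish is the slip ``$\varphi\in\dom Q_c^*$'' where you mean $\eta\in\dom Q_c^*$ with $\{\eta,\varphi\}\in Q_c^*$; the conclusion $\varphi\in\ran Q_c^*$ is unaffected.
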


\begin{proof}
By Proposition~\ref{cloclo} one sees that $\dom {\bar \st}(S)=\dom Q_c^{**}$ and hence
the condition that $\varphi\in \dom{\bar \st}(S)$ is equivalent to
\[
 \varphi_n \to \varphi \quad \text{and} \quad \|Q_c(\varphi_n-\varphi_m)\|^2\to 0.
\]
It remains to apply \eqref{nuk} to get the description of $\dom {\bar \st}(S)$
in the first statement.

As to the second statement, again by Proposition~\ref{cloclo}
one has similarly that $\ker ({\bar \st}(S)-c)=\ker Q_c^{**}$.
Hence, $\varphi\in \ker Q_c^{**}$ if and only if the exists a sequence
$\{\varphi_n,\varphi_n'\}\in S$ such that $\varphi_n\to \varphi$ and $Q_c\varphi_n\to 0$.
Using \eqref{nuk} this second condition can be rewritten as
\[
 (\varphi_n'-c\varphi_n,\varphi_n)= \|Q_c \varphi_n\|^2 \to 0.
\]

Finally, to prove the last statement recall a general description of $\ran T^*$ of $T\in\bL(\sH,\sK)$,
see e.g. \cite{H, HSS2010, Seb83}: $\varphi\in\ran Q_c^*$ if and only if for some $c_\varphi<\infty$,
\begin{equation}\label{ranQ*}
  |(f,\varphi)| \leq c_\varphi \|Q_cf\| \;  \text{ for all }  f \in \dom Q_c.
\end{equation}
Now using \eqref{nuk} the estimate in \eqref{ranQ*} can be rewritten as
\begin{equation}\label{ranQ*2}
 |(f,\varphi)|^2 \leq c_\varphi^2\, \|Q_cf\|^2 = c_\varphi^2\, (f'-cf,f)
 \quad \text{ for all }\; \{f,f'\}\in S.
\end{equation}
Take $C_\varphi=c_\varphi^2$ and observe that $\{f,f'\}\in S$
is equivalent to $\{f,f'-cf\}\in S-c$.
It remains to replace $f$ by $\psi$ and $f'-cf$ by $\psi'$
to conclude that \eqref{ranQ*2} is equivalent to \eqref{halfFr}.
This completes the proof.
\end{proof}

The interest in the closure $\bar \st(S)$  of the form $\st(S)$ lies in the possibility
to uniquely associate a semibounded selfadjoint relation which represents $\bar \st(S)$
and extends the semibounded relation $S$.
To be more specific,  recall that for every closed semibounded
form $\st \in \bF(\sH)$ there exists a unique semibounded selfadjoint
relation $H \in \bL(\sH)$, such that
\[
\st[\varphi, \psi]=(\varphi', \psi), \quad \{\varphi, \varphi'\} \in H, \quad \psi \in \dom \st,
\]
see \cite{BHS,HS2023b}.
Moreover, any semibounded selfadjoint relation $H \in \bL(\sH)$ is obtained in this way;
cf. \cite[Lemma 6.3]{HS2023b}.
Hence, there is a one-to-one correspondence between the semibounded closed forms $\st \in \bF(\sH)$
and the semibounded selfadjoint relations  $H \in \bL(\sH)$ which is indicated by
\begin{equation}\label{conven}
\st=\st_H \quad \mbox{or} \quad H=H_\st,
\end{equation}
depending on the point of view.
Furthermore, it should be remembered that for any
semibounded selfadjoint relation $H \in \bL(\sH)$
one may define the form $\st(H) \in \bF(\sH)$ as in \eqref{s-ip} by
\[%begin{equation}\label{conven1}
\st(H)=(\varphi', \psi), \quad \{\varphi, \varphi'\}, \{\psi, \psi'\}\in H.
\]%end{equation}
Consequently, the form $\st(H)$ is semibounded and closable by Proposition \ref{cloclo},
so that its closure $\bar \st(H)$ is a closed semibounded form.
Since the forms $\bar{\st}(H)$ and $\st_H$ coincide on $\dom H$
it follows that for any semibounded selfadjoint relation $H \in \bL(\sH)$ one has
\begin{equation}\label{hihi}
 \st(H) \subset \bar \st(H)=\st_H,
\end{equation}
see  \eqref{conven} and \cite{BHS}.
As a consequence it is clear that for semibounded selfadjoint relations $H,K \in \bL(\sH)$
one has the equivalent statements
\begin{equation}\label{hihihi}
\st(H) \leq \st(K) \quad \Leftrightarrow  \quad \st_{H} \leq \st_{K} \quad \Leftrightarrow \quad H \leq K,
\end{equation}
see \cite{BHS}. These results will be tacitly used in the rest of this paper.
Note that for a semibounded relation $S \in \bL(\sH)$
the form $\st(S) \in \bF(\sH)$ in \eqref{s-ip} was denoted by $\st_S$ in \cite{BHS};
however, this last notation will not be used in this paper (unless $S$ is selfadjoint) as it may be confusing
with the above conventions \eqref{conven} and \eqref{hihi}.
At this stage it should be mentioned that for general semibounded forms one can also introduce
uniquely determined symmetric relations, see \cite[Theorem 6.1]{HS2023b} and, for special cases
in the present context, Section \ref{Fried} and
Section \ref{Kreintype}.

\medskip

Now return to the semibounded form $\st(S) \in \bF(\sH)$ from \eqref{s-ip}.
The closure of the form $\st(S)$ in \eqref{cst} induces a unique
semibounded selfadjoint relation denoted by $S_{\rm F}$
and the closed form $\ss(S)$ in Definition~\ref{ssDef} induces another unique
semibounded selfadjoint extension denoted by $S_{{\rm K},c}$.
It follows from  \eqref{s-JQQ} that $S_{\rm F}$ and $S_{{\rm K},c}$ are extensions of $S$.
They are usually called the Friedrichs extension and the Kre\u{\i}n type extension of $S$,
respectively. In the notation as discussed in \eqref{conven} one may write
\[
\st_{S_{\rm F}} =  {\bar \st}(S) \subset \ss(S)=\st_{S_{{\rm K},c}}
\quad \mbox{and} \quad \st_{S_{{\rm K},c}} \leq \st_{S_{\rm F}}.
\]
Since dealing with relations allows taking inverses,
 it is easy to show that $J_c^{-1}$ is a representing map for the nonnegative form $\st((S-c)^{-1})$
and that $Q_c^{-1}$ is its companion relation. This observation leads immediately to the connecting formula
\eqref{coddin}.  The following sections will be devoted to the details of these observations.

\section{Semibounded forms induced by semibounded relations}
 \label{symm}

First consider a general semibounded sesquilinear form with lower bound $\gamma$
and let $c \leq \gamma$ as discussed in \cite{HS2023b}.
If $Q_c \in \bL(\sH, \sK_c)$ and $Q_c' \in \bL(\sH,\sK_{c'})$
are representing maps for $\st-c$, then clearly there exists
a partial isometry $V \in \bB(\sK_c, \sK_c')$
with initial space $\cran Q_c$ and final space $\cran Q_c'$, such that
\begin{equation}\label{vnul+}
 Q_c'=V Q_c.
\end{equation}
Here the notation $\bB(\sH, \sK)$ stands for the class of all bounded
everywhere defined operators from the Hilbert space $\sH$ to the Hilbert space $\sK$.
Since $V \in \bB(\sK_c, \sK_c')$, it follows
that the corresponding adjoints are connected via
\[%begin{equation}\label{jqjq22}
(Q_c')^*=(Q_c)^*V^*;
\]%end{equation}
see e.g. \cite[Proposition~1.3.9]{BHS}.
Therefore the representation of the form $\st$
in terms of semibounded symmetric
or selfadjoint relations involving a representing map $Q_c$
is not affected by the nonuniqueness:
 \begin{equation}\label{kkt}
(Q_c')^*Q_c'=Q_c^*Q_c ,\quad
(Q_c')^*(Q_c')^{**}=Q_c^*(Q_c)^{**},
\end{equation}
for these results see \cite{HS2023b}.

\medskip

In the particular case that $\ran Q_c$ and $\ran Q_c'$ are dense subspaces,
the isometry $V$ in \eqref{vnul+} becomes a unitary operator in $\bB(\sK_c, \sK_c')$.
This \textit{minimality of the representing map} in \cite[Definition~2.3]{HS2023b}
can be obtained by replacing the spaces $\sK$ and $\sK'$ by
the closed subspaces $\cran Q_c$ and $\cran Q_c'$, respectively.
However, in the present work this assumption is not used and
the constructions are carried out without any minimality assumption.
This simplifies the treatment when a representing map is to be constructed
for a new form from forms whose representing maps are already given:
%\marginpar{\CR text expanded, references added}
for instance, to define a representing map for a restriction of a given form,
see e.g. Section~\ref{eextr} (proof of Lemma~\ref{superheni}),
or when constructing representing maps involving the functional calculus of given forms;
cf. \cite[Sections~4,~5]{HS2023b} (sum decompositions of forms, mutual singularity of pairs of forms, etc.).

\medskip

Now let $S \in \bL(\sH)$ be semibounded with lower bound $\gamma \in \dR$
and let $c \leq \gamma$. The associated semibounded form $\st(S)$ in \eqref{s-ip}
leads to a representing map $Q_c$ via \eqref{nuk} and its companion relation $J_c$ in \eqref{s-J}
whose basic properties are now established.
The following identities are straightforward consequences from the definition:
\begin{equation}\label{s-J1}
\left\{
\begin{array}{ll}
\dom J_c=\ran Q_c, & \ker J_c=\{0\}, \\
\ran J_c=\ran (S-c), &
\mul J_c=\{ \varphi'-c \varphi :\, \{\varphi, \varphi'\} \in S, \,Q_c \varphi=0\}.
\end{array}
\right.
\end{equation}
In the next result the identity involving $\mul J_c$ will be expressed differently,
independent of the representing map $Q_c$. In particular, it follows that
$J_c$ is an operator when $S$ is densely defined, in which case $S$ is an operator.
The contents of the next theorem will be interpreted for
the Kre\u{\i}n type extensions later in Section~\ref{Kreintype}.

\begin{theorem}\label{lemin}
Let $S \in \bL(\sH)$ be semibounded with lower bound $\gamma \in \dR$
and let $c \leq \gamma$. Then the multivalued part of $J_c$ in \eqref{s-J} is given by
\begin{equation}\label{oold}
 \mul J_c= \ran (S-c) \cap \mul S^*.
\end{equation}
The element $\psi'\in \mul J_c^{**}$ if and only if there exists
$\{\varphi_n,\varphi_n'\}\in S$ such that
\begin{equation}\label{mulJ**}
 \varphi_n'-c\varphi_n \to \psi' \quad \text{and}\quad (\varphi_n'-c\varphi_n,\varphi_n)\to 0.
\end{equation}
Furthermore, $\psi'\in \dom J_c^*$ if and only if there exists $C_{\psi'} <\infty$ such that
\begin{equation}\label{domJ*}
  |(\varphi',\psi')|^2 \leq  C_{\psi'}(\varphi' , \varphi) \quad \text{ for all }\; \{\varphi,\varphi'\}\in S-c.
\end{equation}
\end{theorem}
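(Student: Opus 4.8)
The plan is to prove the three assertions in turn, each time reducing a statement about the companion relation $J_c$ to a statement about $S$ by invoking the key identity \eqref{nuk}, which converts $Q_c$-norms and $Q_c$-inner products into quantities involving only $S$ and $c$.

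For the identity \eqref{oold} I would start from the description of $\mul J_c$ already recorded in \eqref{s-J1}, namely $\mul J_c=\{\varphi'-c\varphi:\{\varphi,\varphi'\}\in S,\ Q_c\varphi=0\}$, and re-express the side condition $Q_c\varphi=0$ without reference to the representing map. Since $Q_c\varphi\in\ran Q_c$, one has $Q_c\varphi=0$ if and only if $(Q_c\varphi,Q_c\psi)=0$ for all $\psi\in\dom S$ (orthogonality to $\cran Q_c$ of a vector already lying in $\ran Q_c$ forces it to vanish). By \eqref{nuk} this is the same as $(\varphi'-c\varphi,\psi)=0$ for all $\psi\in\dom S$, that is, $\varphi'-c\varphi\perp\dom S$. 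Using $\mul S^*=(\dom S)^\perp$, the condition becomes $\varphi'-c\varphi\in\mul S^*$. Since $\{\varphi'-c\varphi:\{\varphi,\varphi'\}\in S\}=\ran(S-c)$, collecting those elements of $\ran(S-c)$ that additionally lie in $\mul S^*$ yields exactly \eqref{oold}.

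For the characterization \eqref{mulJ**} of $\mul J_c^{**}$, I would use that $J_c^{**}$ is the closure of $J_c$, so that $\psi'\in\mul J_c^{**}$ means $\{0,\psi'\}$ is a limit of elements of $J_c$. Writing a generic element of $J_c$ as $\{Q_c\varphi,\varphi'-c\varphi\}$ with $\{\varphi,\varphi'\}\in S$, this says precisely that there is a sequence $\{\varphi_n,\varphi_n'\}\in S$ with $Q_c\varphi_n\to 0$ and $\varphi_n'-c\varphi_n\to\psi'$. Finally, by \eqref{nuk} one has $\|Q_c\varphi_n\|^2=(\varphi_n'-c\varphi_n,\varphi_n)$, so $Q_c\varphi_n\to 0$ is equivalent to $(\varphi_n'-c\varphi_n,\varphi_n)\to 0$, which gives \eqref{mulJ**}.

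The description \eqref{domJ*} of $\dom J_c^*$ is the most delicate step, and it uses the same bounded-functional mechanism employed for the range description of $Q_c^*$ recalled in Theorem~\ref{kerFr}. I would invoke the general characterization of the domain of the adjoint of a relation $T$: a vector $\psi'$ lies in $\dom T^*$ if and only if there is $C<\infty$ with $|(g,\psi')|^2\leq C\|f\|^2$ for all $\{f,g\}\in T$. Such a bound automatically forces well-definedness of the induced functional (taking $f=0$, so $g\in\mul T$, gives $(g,\psi')=0$), and Riesz representation then produces the partner $k$ with $\{\psi',k\}\in T^*$. Applying this with $T=J_c$, whose elements are $\{Q_c\varphi,\varphi'-c\varphi\}$, the criterion reads $|(\varphi'-c\varphi,\psi')|^2\leq C\|Q_c\varphi\|^2$ for all $\{\varphi,\varphi'\}\in S$. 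Rewriting $\|Q_c\varphi\|^2=(\varphi'-c\varphi,\varphi)$ via \eqref{nuk} and reparametrizing through $\{\varphi,\varphi'-c\varphi\}\in S-c$ then turns this into \eqref{domJ*}, with $C_{\psi'}=C$. The main obstacle I anticipate is handling the well-definedness of the induced functional on $\ran Q_c=\dom J_c$ in the possibly multivalued and non-densely-defined setting; the argument must therefore be organized so that the quadratic estimate is established first, since it is exactly this estimate that simultaneously guarantees well-definedness and boundedness.
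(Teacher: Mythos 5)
Your proposal is correct and follows essentially the same route as the paper: all three parts are reduced to statements about $S$ via the identity \eqref{nuk}, with the sequence characterization of $\mul J_c^{**}$ argued identically. The only cosmetic differences are that for \eqref{oold} the paper deduces $\varphi'-c\varphi\perp\dom S$ from the Cauchy--Schwarz inequality for $\st(S)-c$ where you use orthogonality of $Q_c\varphi$ to $\ran Q_c$ (the same fact in disguise), and for \eqref{domJ*} the paper passes to $J_c^{-1}$ and the range description $\dom J_c^*=\ran (J_c^{-1})^*$ where you apply the equivalent bounded-functional criterion for $\dom J_c^*$ directly, correctly handling well-definedness on $\mul J_c$.
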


\begin{proof}
The following equivalence is a consequence of \eqref{nuk}:
\[%begin{equation}\label{nukk}
 (\varphi' -c \varphi, \varphi)=0 \quad \Leftrightarrow \quad Q_c \varphi=0
\]%end{equation}
for all $\{\varphi, \varphi'\} \in S$. In other words,
\begin{equation}\label{src}
\mul J_c=\big\{ \varphi'-c\varphi \in \sH:\,
(\varphi'-c \varphi,\varphi)=0 \,\, \mbox{for some} \,\, \{\varphi,\varphi'\} \in S \big\}.
\end{equation}
Now \eqref{oold} can be easily proved; cf. \cite[Lemma~3.1]{HSSW2007}.

Assume that $\varphi'-c\varphi \in \ran (S-c) \cap \mul S^*$
for some $\{\varphi, \varphi'\} \in S$.
This implies that $(\varphi'-c \varphi, \varphi) = 0$
and therefore, by \eqref{src},  $\varphi'-c \varphi \in \mul J_c$.

Conversely, assume that $\varphi'-c\varphi \in \mul J_c$.
Therefore, by \eqref{src}, one sees that
$\{\varphi, \varphi'\} \in S$
and $(\varphi'-c \varphi, \varphi) = 0$.
Clearly with $\{\psi, \psi'\} \in S$ the Cauchy-Schwarz inequality
for the nonnegative form $\st(S)-c$ gives
\[
 |(\varphi'-c \varphi, \psi)|^2 \leq (\varphi'-c \varphi, \varphi)(\psi'-c\psi, \psi).
\]
Hence $(\varphi'-c \varphi, \psi)=0$ for all $\psi \in \dom S$, so that
$\varphi' -c \varphi \in (\dom S)^\perp =\mul S^*$.
Therefore, it follows that $\varphi' -c \varphi \in \ran (S-c) \cap \mul S^*$.

To prove \eqref{mulJ**} let $\psi'\in \mul J_c^{**}$. Using \eqref{s-J} this means that
there exists a sequence $\{\varphi_n,\varphi_n'\} \in S$ such that $Q_c\varphi_n\to 0$ and
$\varphi_n'-c\varphi_n \to \psi'$. Now apply \eqref{nuk} to see that
\[
 (\varphi_n'-c\varphi_n,\varphi_n)=\|Q_c \varphi_n\|^2\to 0.
\]
The converse statement is shown similarly.

To prove \eqref{domJ*} observe that by \eqref{s-J1} $J_c^{-1}$ is an operator and that
$\dom J_c^*=\ran (J_c^{-1})^*$.
Now proceeding as in the proof of Theorem~\ref{kerFr} one concludes that $\psi'\in \dom J_c^*$
if and only if there exists $c_{\psi'} <\infty$ such that
\[%begin{equation}\label{domJ*2}
  |(f'-cf,\psi')| \leq c_{\psi'} \|J_c^{-1}(f'-cf)\|= c_{\psi'} \|Q_cf\| \;  \text{ for all }  \{f,f'\}\in S.
\]%end{equation}
Again squaring this, using \eqref{nuk}, and then replacing $f$ by $\varphi$ and $f'-cf$ by $\varphi'$ yields
\[%begin{equation}\label{domJ*2}
 |(\varphi',\psi')|^2 \leq c_{\psi'}^2\, (\varphi,\varphi')
 \quad \text{ for all }\; \{\varphi,\varphi'\}\in S-c.
\]%end{equation}
Thus with $C_{\psi'}=c_{\psi'}^2$ one gets \eqref{domJ*}.
This completes the proof.
\end{proof}

Since the representing map $Q_c \in \bL(\sH, \sK_c)$ in \eqref{forrep}
is only determined up to a partial isometry,
its choice will influence the companion relation $J_c \in \bL(\sK_c, \sH)$ in \eqref{s-J}.
Thus it will have to be checked whether the choice of $Q_c$ influences
the relation $c +J_cQ_c$ in \eqref{s-JQQ} and the form $\ss(S)$ in \eqref{ss}.

\begin{lemma}\label{jjjj}
Let $S \in \bLR(\sH)$ be semibounded with lower bound $\gamma \in \dR$
and let $c \leq \gamma$. Assume that  $Q_c \in \bL(\sH, \sK_c)$
and $Q_c' \in \bL(\sH,\sK_{c'})$ are representing
maps for $\st(S)-c$ which are connected via $Q_c'=VQ_c$
by a partial isometry $V \in \bB(\sK_c, \sK_c')$ as in \eqref{vnul+}.
Let $J_c \in \bL(\sK_c, \sH)$ and $J_{c}' \in \bL(\sK_{c}', \sH)$
be the companion relations of $Q_c$ and
$Q_c'$, respectively. Then $J_c$ and $J_c'$ are connected by
 \begin{equation}\label{jqjq}
J_c' \,(V \uphar \ran Q_c) =J_c.
\end{equation}
In particular, one has
\begin{equation}\label{jqjqq}
S \subset c+J_c'Q_c'=c+J_c Q_c;
\end{equation}
here the inclusion is an equality if and only if $\ran (S-c) \cap \mul S^*=\mul S$.
Moreover, the adjoints are connected via
\begin{equation}\label{jqjq2}
 ((J_c')^{\ast})_{\rm reg} =V\, (J_c^{\ast})_{\rm reg}.
\end{equation}
\end{lemma}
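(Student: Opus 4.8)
The plan is to establish the three identities in turn, extracting \eqref{jqjqq} and \eqref{jqjq2} from \eqref{jqjq} and from the representation-independent descriptions already proved in Theorem~\ref{lemin}. I begin with \eqref{jqjq}, arguing directly from the definition \eqref{s-J}. Substituting $Q_c'=VQ_c$ gives $J_c'=\{\,\{VQ_c\varphi,\varphi'-c\varphi\}:\{\varphi,\varphi'\}\in S\,\}$. A pair $\{x,z\}$ lies in $J_c'\,(V\uphar\ran Q_c)$ exactly when $x\in\ran Q_c$ and $\{Vx,z\}\in J_c'$, i.e. $Vx=VQ_c\varphi$ and $z=\varphi'-c\varphi$ for some $\{\varphi,\varphi'\}\in S$. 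The decisive point is that $V$ is isometric on its initial space $\cran Q_c\supset\ran Q_c$, so $Vx=VQ_c\varphi$ forces $x=Q_c\varphi$, whence $\{x,z\}=\{Q_c\varphi,\varphi'-c\varphi\}\in J_c$. The reverse inclusion is immediate, since $\{Q_c\varphi,VQ_c\varphi\}\in V\uphar\ran Q_c$ and $\{VQ_c\varphi,\varphi'-c\varphi\}=\{Q_c'\varphi,\varphi'-c\varphi\}\in J_c'$.

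For \eqref{jqjqq} the inclusion $S\subset c+J_cQ_c$ is precisely \eqref{s-JQQ} (and likewise with the primed objects). The equality $J_cQ_c=J_c'Q_c'$ then follows by composing \eqref{jqjq} on the right with $Q_c$: since $(V\uphar\ran Q_c)Q_c=VQ_c=Q_c'$, associativity of relation products yields $J_cQ_c=J_c'\big((V\uphar\ran Q_c)Q_c\big)=J_c'Q_c'$. For the equality criterion I would note that $S$ and $c+J_cQ_c$ share the domain $\dom S=\dom Q_c$, because $\ran Q_c=\dom J_c$; an inclusion of relations with equal domains is an equality if and only if the multivalued parts agree. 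A direct computation from the composition gives $\mul(c+J_cQ_c)=\mul(J_cQ_c)=\mul J_c$, and \eqref{oold} of Theorem~\ref{lemin} identifies $\mul J_c=\ran(S-c)\cap\mul S^*$. As $\mul S\subset\ran(S-c)\cap\mul S^*$ always holds, the condition $\mul S=\mul J_c$ reduces to the stated identity $\ran(S-c)\cap\mul S^*=\mul S$.

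The identity \eqref{jqjq2} is the most delicate step, and I expect it to be the main obstacle, since it entangles the adjoints with the multivalued parts encoded by the projections $\pi,\pi'$ onto $(\ran Q_c)^\perp$ and $(\ran Q_c')^\perp$. First I would show $VJ_c^*\subset(J_c')^*$ by a direct adjoint computation: for $\{f,g\}\in J_c^*$ and any $\{\varphi,\varphi'\}\in S$, using $V^*V=I-\pi$ (the projection onto the initial space $\cran Q_c$) and $Q_c\varphi\in\cran Q_c$, one has $(Vg,Q_c'\varphi)=(Vg,VQ_c\varphi)=(V^*Vg,Q_c\varphi)=(g,Q_c\varphi)=(f,\varphi'-c\varphi)$, so that $\{f,Vg\}\in(J_c')^*$. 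To pass to regular parts, recall $(J_c^*)_{\rm reg}=(I-\pi)J_c^*$ and $((J_c')^*)_{\rm reg}=(I-\pi')(J_c')^*$, where $I-\pi'=VV^*$ projects onto the final space $\cran Q_c'$. For $f\in\dom J_c^*$ with $\{f,g\}\in J_c^*$, the partial-isometry relations give $V(J_c^*)_{\rm reg}f=V(I-\pi)g=VV^*Vg=Vg$, while $\{f,Vg\}\in(J_c')^*$ together with $Vg\in\cran Q_c'$ yields $((J_c')^*)_{\rm reg}f=(I-\pi')Vg=Vg$; hence the two operators act identically. Finally their domains coincide, because by \eqref{domJ*} the set $\dom J_c^*=\dom(J_c')^*$ is characterized solely through $S$, independently of the choice of representing map. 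This proves \eqref{jqjq2} and completes the lemma.
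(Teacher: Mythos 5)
Your proof is correct and follows essentially the same route as the paper: \eqref{jqjq} is read off from the definition \eqref{s-J} together with the injectivity of $V$ on $\cran Q_c$, the equality criterion in \eqref{jqjqq} comes from $\dom (J_cQ_c)=\dom S$ and $\mul (J_cQ_c)=\mul J_c$ combined with \eqref{oold}, and \eqref{jqjq2} rests on the partial-isometry identities $V^*V=I-\pi$ and $VV^*=I-\pi'$. The only (harmless) deviation is at the very end: the paper proves the reverse inclusion $((J_c')^{\ast})_{\rm reg}\subset V(J_c^{\ast})_{\rm reg}$ by retracing the adjoint computation symmetrically, whereas you deduce $\dom J_c^*=\dom (J_c')^*$ from the representation-independent description \eqref{domJ*} in Theorem~\ref{lemin}; both arguments are valid.
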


\begin{proof}
Let $V$ be the indicated partial isometry.
Then the identity \eqref{jqjq} is obtained from \eqref{vnul+}
by means of the formula \eqref{s-J}.

The identity in \eqref{jqjqq} is a straightforward consequence of \eqref{vnul+} and \eqref{jqjq}.
The inclusion in \eqref{jqjqq} was already stated in \eqref{s-JQQ}.
Next observe that $\dom J_cQ_c=\dom S$.
Hence, the inclusion in \eqref{jqjqq} is an equality if and only if
$\mul S=\mul J_c Q_c=\mul J_c$.
Now the assertion follows from \eqref{oold} in Theorem~\ref{lemin}.

In order to prove the identity \eqref{jqjq2} note that
\begin{equation}\label{jqjq3}
(\mul (J_c')^*)^\perp =\cran Q_c'
=V \, \cran Q_c=V\, (\mul J_c^*)^\perp.
\end{equation}
Next assume that $\{f,f'\} \in V\,(J_c^*)_{\rm reg}$.
Then $\{f,h\} \in (J_c^*)_{\rm reg}$ and $f'= Vh$.
Since $J_c^*$ is closed $\{f,h\} \in (J_c^*)_{\rm reg}$ implies that
$\{f,h\} \in J_c^*$ and $h \in \cran Q_c$. Hence, $f' \in \cran Q_c'$,
and using \eqref{s-J} for all $\{\varphi, \varphi'\} \in S$ one gets
\[
\begin{split}
 0&=(h, Q_c \varphi)-(f, \varphi'-c \varphi)\\
 &=(V h, V Q_c \varphi)-(f, \varphi'-c \varphi) \\
&=(f', Q_c' \varphi)-(f, \varphi'-c \varphi),
\end{split}
\]
which gives $\{f, f'\} \in (J_c')^*$. Hence
the inclusion $V\, (J_c^*)_{\rm reg}  \subset ((J_c')^*)_{\rm reg}$
has been shown; cf. \eqref{jqjq3}.
For the reverse inclusion, let $\{f, f'\} \in ((J_c')^*)_{\rm reg}$.
In other words, $\{f,f'\}  \in (J_c')^*$ and $f'=Vh$
for some $h \in (\mul (J_c^*)^\perp$ by \eqref{jqjq3}.
Therefore, retracing the above steps shows $\{f,h\} \in (J_c^*)_{\rm reg}$,
and hence $\{f,f'\} \in V(J_c^*)_{\rm reg}$.
\end{proof}

\begin{corollary}\label{indep}
Under the circumstances of Lemma {\rm \ref{jjjj}} one has
\[%begin{equation}\label{kt3}
\left\{
\begin{array}{l}
 J_c' (J_c')^{*}=J_c' ((J_c')^{*})_{\rm reg}= J_c (J_c^{*})_{\rm reg}=J_c J_c^{*}, \\
  (J_c')^{\ast \ast} (J_c')^{\ast}=( ((J_c')^{\ast})_{\rm reg})^* ((J_c')^{\ast})_{\rm reg}
= ((J_c^{\ast})_{\rm reg})^*(J_c^{\ast})_{\rm reg}=J_c^{\ast \ast}J_c^{\ast} .
\end{array}
\right.
\]%end{equation}
\end{corollary}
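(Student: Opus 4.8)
The plan is to handle both chains uniformly: in each, the two \emph{outer} equalities are instances of identities valid for an arbitrary relation $M$, namely
\[
 MM^*=M(M^*)_{\rm reg} \quad\text{and}\quad M^{**}M^*=((M^*)_{\rm reg})^*(M^*)_{\rm reg},
\]
while only the middle equality carries genuine content and is where Lemma~\ref{jjjj} is used. I would first record these two identities (neither needs closedness or minimality). The first holds because $\mul M^*=(\dom M)^\perp$ and $\dom M\cap(\dom M)^\perp=\{0\}$, so left composition by $M$ annihilates the multivalued part of $M^*$; applying it with $M=J_c'$ and with $M=J_c$ gives $J_c'(J_c')^*=J_c'((J_c')^*)_{\rm reg}$ and $J_c(J_c^*)_{\rm reg}=J_cJ_c^*$. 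The second identity follows from the first applied to $M^{**}$ (so $M^{**}M^*=M^{**}(M^*)_{\rm reg}$) together with $((M^*)_{\rm reg})^*=M^{**}(I-\pi)$, where $\pi$ is the orthogonal projection onto $\mul M^*$, using that $(I-\pi)$ acts as the identity on $\ran(M^*)_{\rm reg}$; with $M=J_c'$ and $M=J_c$ this yields the two outer equalities of the second chain.

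For the middle equality of the first chain, $J_c'((J_c')^*)_{\rm reg}=J_c(J_c^*)_{\rm reg}$, I write $A=(J_c^*)_{\rm reg}$. By \eqref{jqjq2} one has $((J_c')^*)_{\rm reg}=VA$, and by \eqref{jqjq} one has $J_c=J_c'(V\uphar\ran Q_c)$, so the two sides become $J_c'(VA)$ and $J_c'((V\uphar\ran Q_c)A)$. The inclusion $J_c'((V\uphar\ran Q_c)A)\subset J_c'(VA)$ is trivial. For the reverse I would use that composing on the left with $J_c'$ forces any intermediate vector into $\dom J_c'=\ran Q_c'=V(\ran Q_c)$: if $\{x,y\}\in A$ and $Vy\in\dom J_c'$, then injectivity of $V$ on its initial space $\cran Q_c\supset\ran A$ gives $y\in\ran Q_c$, so the relevant pair already lies in $(V\uphar\ran Q_c)A$. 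This reconciliation of $V$ with its restriction $V\uphar\ran Q_c$, against the possibly non-closed range $\ran Q_c$, is the one delicate point and the main obstacle in the proof.

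For the middle equality of the second chain the argument is cleaner, since $V\in\bB(\sK_c,\sK_c')$ is bounded and everywhere defined. With $B=((J_c')^*)_{\rm reg}=VA$ as before, boundedness of $V$ gives $B^*=A^*V^*$, hence $B^*B=A^*V^*VA$. Since $V^*V$ is the orthogonal projection onto the initial space $\cran Q_c$ and $\ran A\subset(\mul J_c^*)^\perp=\cran Q_c$ by \eqref{jjqc}, one has $V^*VA=A$ and therefore $B^*B=A^*A$, which is the middle equality $(((J_c')^*)_{\rm reg})^*((J_c')^*)_{\rm reg}=((J_c^*)_{\rm reg})^*(J_c^*)_{\rm reg}$. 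Concatenating the three equalities in each chain then finishes the proof.
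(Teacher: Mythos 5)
Your proposal is correct and follows essentially the same route as the paper: reduce via the general identities $TT^*=T(T^*)_{\rm reg}$ and $T^{**}T^*=((T^*)_{\rm reg})^*(T^*)_{\rm reg}$ to the two middle equalities, and then derive those from \eqref{jqjq} and \eqref{jqjq2}. The paper simply asserts that the middle equalities ``follow immediately'' from those two formulas, so your careful handling of the restriction $V\uphar\ran Q_c$ versus $V$ (using injectivity of $V$ on $\cran Q_c\supset\ran (J_c^*)_{\rm reg}$) supplies detail the paper leaves implicit.
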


\begin{proof}
 Recall that for any linear relation $T \in \bL(\sH, \sK)$ one has
\begin{equation}\label{teetee}
 TT^*=T(T^*)_{\rm reg} \quad \mbox{and}
 \quad T^{**}T^*=((T^*)_{\rm reg})^* (T^*)_{\rm reg},
\end{equation}
see \cite{HS2023seq} for similar results.
Therefore, due to the identies in \eqref{teetee} it suffices to verify that
\[
 J_c' ((J_c')^{*})_{\rm reg}= J_c (J_c^{*})_{\rm reg}
\quad \mbox{and} \quad
( ((J_c')^{\ast})_{\rm reg})^* ((J_c')^{\ast})_{\rm reg}
= ((J_c^{\ast})_{\rm reg})^*(J_c^{\ast})_{\rm reg}.
\]
However, these identities follow immediately from \eqref{jqjq} and \eqref{jqjq2}.
\end{proof}

\begin{remark}\label{sebsto}
Let $S \in \bL(\sH)$ be a semibounded relation in a Hilbert space $\sH$
with lower bound $\gamma \in \dR$  and $c \leq \gamma$.
Then $S-c \geq0$ and this implies that
\begin{equation}\label{SS0}
 (\varphi'-c\varphi, \psi'-c \psi)_{S-c}
 :=(\varphi'-c \varphi, \psi)=(\varphi, \psi' - c \psi),
 \quad
 \{\varphi,\varphi'\}, \, \{\psi,\psi'\} \in S,
\end{equation}
defines a semi-inner product on the linear subspace $\ran (S-c)$.
It is immediately clear that the linear subspace of null elements
of $\ran (S-c)$, provided with this semi-inner product, is given by
$\ran (S-c) \cap \mul S^*$; see \eqref{src} and Theorem \ref{lemin}.
The Hilbert space $\sH_{S-c}$ is defined as the completion of
\[%begin{equation}\label{equiv}
 \ran (S-c) / (\ran (S-c) \cap \mul S^*)
\]%end{equation}
with the inner product
\[
 ( [\varphi'-c \varphi],  [\psi'-c \psi] )_{S-c}
 := (\varphi'-c \varphi,  \psi'-c \psi )_{S-c},
 \quad \{\varphi,\varphi'\}, \, \{\psi,\psi'\} \in S,
\]
where $[\varphi'-c \varphi]$ and $[\psi'-c \psi]$
denote the relevant equivalence classes.
Then, clearly, the operator $q_c \in \bL(\sH, \sH_{S-c})$ given by
\[
 q_c \varphi = [ \varphi'-c \varphi ], \quad \{\varphi, \varphi'\} \in S,
\]
is well defined and, automatically, one has that $\ran q_c$
is dense in $\sH_{S-c}$.
It follows from the above identity \eqref{SS0}  that
\[
  (q_c \varphi,  q_c \psi)_{S-c}=(\st(S)-c)[\varphi, \psi],
  \quad \{\varphi,\varphi'\}, \, \{\psi,\psi'\} \in S.
\]
Thus the operator $q_c \in \bL(\sH, \sH_{S-c})$
acts as a minimal representing map for the form $\st(S)-c$.
This construction has been used for nonnegative relations
in \cite{HSSW2007}, see also \cite{HSSW2017, San} for the sectorial case.
In the case when $S$ is densely defined one has $\mul S^*=\{0\}$,
i.e., $S^*$ and, therefore, also $J_c$ is an operator.
When $S$ is a closed densely defined nonnegative operator, this construction
reduces to that of Sebesty\'en and Stochel appearing in \cite{SS}: in other words,
their construction produced a minimal representing map for the closable form $\st(S) \geq 0$.
\end{remark}

\section{A characterization of the Friedrichs extensions} \label{Fried}

Let $S \in \bLR(\sH)$ be a semibounded relation
with lower bound $\gamma \in \dR$.
Let $\st(S)$ be the corresponding closable form
with representing map $Q_c$ in \eqref{forrep}
and ${\bar \st}(S)$ be its closure with representing map $Q_c^{**}$ in \eqref{cst}.
Then the semibounded selfadjoint relation $S_{\rm F}
\in \bL(\sH)$ which is
uniquely determined by the semibounded closed form ${\bar \st}(S) \in \bF(\sH)$, is given by
\begin{equation}\label{fri}
S_{\rm F}=c+Q_c^* Q_c^{**},
\end{equation}
while the symmetric semibounded relation $S_{\rm f}
\in \bL(\sH)$ which is uniquely
determined by the closable form $\st(S) \in \bF(\sH)$, is given by
\begin{equation}\label{friw}
S_{\rm f}=c+Q_c^*Q_c,
\end{equation}
see \cite[Theorem 6.1 and Corollary 6.4]{HS2023b} for a characterization and details.
Moreover,  the relations $S_{\rm F}$ and $S_{\rm f}$
do not depend on the particular choice of the representing map $Q_c$;
see \eqref{kkt}.
It is a direct consequence of  \eqref{s-JQQ}
(using $J_c \subset Q_c^*$) and Lemma \ref{jjjj}
that $S_{\rm f}$ and  $S_{\rm F}$ are extensions of $S$, such that
 \begin{equation}\label{hardf+}
 S \subset S_{\rm f}=c+Q_c^*Q_c  \subset S_{\rm F}=c+Q_c^{*}Q_c^{**}.
\end{equation}
The multivalued parts $\mul S_{\rm f}=\mul S_{\rm F}=\mul Q_c^*=(\dom Q_c)^\perp$
satisfy
\begin{equation}\label{hardf++}
\mul S_{\rm f}=\mul S_{\rm F}=\mul S^*.
\end{equation}
Moreover, it follows from \cite[Corollary 6.4]{HS2023b}
that the lower bounds of $S_{\rm f}$ and $S_{\rm F}$
are given by
\[%begin{equation}\label{lowerb}
 m(S_{\rm f})=m(S_{\rm F})=\gamma.
\]%end{equation}

The semibounded selfadjoint extension $S_{\rm F}$ in \eqref{fri}
is the well-known \textit{Friedrichs extension},
while the semibounded symmetric extension $S_{\rm f}$ in \eqref{friw}
will be called the \textit{weak Friedrichs extension}.
It is clear that for any linear extension $T$ of $S$ with $S \subset T \subset S_{\rm F}$
it follows that also  the inclusions $S \subset \overline{T} \subset S_{\rm F}$ hold,
where $\overline{T}$ stands for the closure of $T$.  Hence it follows that
$\st(S) \subset \st(\overline{T})\subset \st(S_{\rm F}) \subset \st_{S_{\rm F}}={\bar \st}(S)$,
which gives
\begin{equation}\label{foot1}
 {\bar \st}(\overline{T})={\bar \st}(S)
 \quad \mbox{and} \quad
 (\overline{T})_{\rm F}=S_{\rm F},
\end{equation}
so that the Friedrichs extension of such a
semibounded extension $T$ of $S$ remains the same.
In particular, one has $ (\overline{S})_{\rm F}=S_{\rm F}$.
From the definition the following translation invariance property
of $S_{\rm F}$ is immediate
\[%begin{equation}\label{invarFr}
 (S-c)_{\rm F}=S_{\rm F}-c.
\]%end{equation}
The extensions $S_{\rm f}$ and $S_{\rm F}$ are intrinsically
characterized in the following theorem.

\begin{theorem}\label{friedext}
Let $S \in \bLR(\sH)$ be a semibounded  linear relation with lower bound
$\gamma \in \dR$ and let $\st(S)$ be the corresponding semibounded form in \eqref{s-ip}.
Then the following statements hold:
\begin{enumerate} \def\labelenumi{\rm(\alph{enumi})}
\item
The Friedrichs extension $S_{\rm F}
\in \bL(\sH)$ of $S$  in \eqref{fri}
is given by
\begin{equation}\label{kurtplus}
S_{\rm F}=\big\{ \{\varphi, \varphi'\} \in S^* :\, \varphi \in \dom \bar{\st}(S) \,\big\}.
\end{equation}
 If $H \in \bL(\sH)$ is a selfadjoint extension of $S$ with
\begin{equation}\label{kurt}
\dom H \subset \dom \bar \st(S),
\end{equation}
then $H=S_{\rm F}$.

\item
The weak Friedrichs extension
$S_{\rm f} \in \bL(\sH)$ of $S$  in \eqref{friw}
is  given by
\begin{equation}\label{kurtplus0}
S_{\rm f}=S \hplus (\{0\} \times \mul S^*).
\end{equation}
 If $H \in \bL(\sH)$ is a symmetric extension of $S$ with
\begin{equation}\label{kurt0}
\dom H \subset \dom S,
\end{equation}
then $H \subset S_{\rm f}$.
\item
The equality $S_{\rm f} = S_{\rm F}$ holds if and only if
 \begin{equation}\label{kurt1}
 \dom S=\cdom S \cap \dom S^*.
\end{equation}
In particular, \eqref{kurt1} holds if $\dom S$ is closed.
\end{enumerate}
\end{theorem}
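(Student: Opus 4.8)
The plan is to prove (a), (b), (c) in turn, using throughout the representations $S_{\rm f}=c+Q_c^*Q_c$ and $S_{\rm F}=c+Q_c^*Q_c^{**}$ from \eqref{fri}--\eqref{friw}, the inclusions $S\subset S_{\rm f}\subset S_{\rm F}\subset S^*$, and the identities $\mul S_{\rm f}=\mul S_{\rm F}=\mul S^*$ from \eqref{hardf+}--\eqref{hardf++}. For part (a) I would first note that, as a selfadjoint extension of the symmetric relation $S$, one has $S_{\rm F}\subset S^*$, while $\dom S_{\rm F}=\dom Q_c^*Q_c^{**}\subset\dom Q_c^{**}=\dom\bar\st(S)$; together these give the inclusion $\subset$ in \eqref{kurtplus}. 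For the reverse inclusion, take $\{\varphi,\varphi'\}\in S^*$ with $\varphi\in\dom\bar\st(S)$; by Corollary~\ref{lemmon} one has $\bar\st(S)[\varphi,\psi]=(\varphi',\psi)$ for all $\psi\in\dom\bar\st(S)$, and this is precisely the condition that $\{\varphi,\varphi'\}$ belongs to the selfadjoint relation $S_{\rm F}$ representing the closed form $\bar\st(S)$. This proves \eqref{kurtplus}. The uniqueness statement under \eqref{kurt} then follows because a selfadjoint extension $H$ of $S$ satisfies $H\subset S^*$, so $\dom H\subset\dom\bar\st(S)$ and \eqref{kurtplus} force $H\subset S_{\rm F}$; since both relations are selfadjoint, hence maximal symmetric, this inclusion is an equality.

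For part (b) the key observation is that $\dom S_{\rm f}=\dom S$, which follows from $\dom S\subset\dom S_{\rm f}$ (because $S\subset S_{\rm f}$) together with $\dom S_{\rm f}=\dom Q_c^*Q_c\subset\dom Q_c=\dom S$. Given this, the inclusion $S\hplus(\{0\}\times\mul S^*)\subset S_{\rm f}$ is immediate from $S\subset S_{\rm f}$ and $\mul S_{\rm f}=\mul S^*$. Conversely, for $\{\varphi,\varphi'\}\in S_{\rm f}$ one has $\varphi\in\dom S_{\rm f}=\dom S$, so there is $\{\varphi,\varphi''\}\in S\subset S_{\rm f}$; then $\{0,\varphi'-\varphi''\}\in S_{\rm f}$ forces $\varphi'-\varphi''\in\mul S^*$, whence $\{\varphi,\varphi'\}\in S\hplus(\{0\}\times\mul S^*)$, giving \eqref{kurtplus0}. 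The last assertion of (b) is the same subtraction argument applied to a symmetric extension $H$ with $\dom H\subset\dom S$: since $H\subset S^*$, for $\{\varphi,\varphi'\}\in H$ one subtracts some $\{\varphi,\varphi''\}\in S\subset H$ to obtain $\varphi'-\varphi''\in\mul H\subset\mul S^*$, so that $H\subset S_{\rm f}$.

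For part (c) I would first reduce $S_{\rm f}=S_{\rm F}$ to the single domain equality $\dom S_{\rm F}=\dom S$: two nested relations sharing the same multivalued part coincide as soon as their domains coincide, and since $S_{\rm f}\subset S_{\rm F}$, $\dom S_{\rm f}=\dom S$, $\mul S_{\rm f}=\mul S_{\rm F}$, equality holds exactly when $\dom S_{\rm F}=\dom S$. One always has the chain $\dom S\subset\dom S_{\rm F}\subset\cdom S\cap\dom S^*$, the second inclusion because $S_{\rm F}\subset S^*$ and $\dom S_{\rm F}\subset\dom Q_c^{**}=\dom\bar\st(S)\subset\cdom S$ (the last step by Theorem~\ref{kerFr}). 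If \eqref{kurt1} holds, the two ends of this chain agree, forcing $\dom S_{\rm F}=\dom S$ and hence $S_{\rm f}=S_{\rm F}$. Conversely, assume $\dom S_{\rm F}=\dom S$; then, exactly as in (b) applied to $S_{\rm F}$, one gets $S_{\rm F}=S\hplus(\{0\}\times\mul S^*)$. Taking adjoints, using $(R_1\hplus R_2)^*=R_1^*\cap R_2^*$ (valid for the subspace sum of any two relations) and $(\{0\}\times\mul S^*)^*=(\mul S^*)^\perp\times\sH=\cdom S\times\sH$, one obtains
\[
 S_{\rm F}^*=S^*\cap(\cdom S\times\sH), \qquad \dom S_{\rm F}^*=\cdom S\cap\dom S^*.
\]
Since $S_{\rm F}$ is selfadjoint, $\dom S=\dom S_{\rm F}=\dom S_{\rm F}^*=\cdom S\cap\dom S^*$, which is \eqref{kurt1}; and if $\dom S$ is closed, then $\cdom S=\dom S$ together with $\dom S\subset\dom S^*$ give \eqref{kurt1} at once.

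The step I expect to be the main obstacle is the converse direction of (c). A direct attempt to prove $\cdom S\cap\dom S^*\subset\dom S$, for instance through the decomposition $\dom S^*=\dom S_{\rm F}\hplus\ker(S^*-\lambda)$ with $\lambda<\gamma$, founders on the fact that elements of $\cdom S\cap\dom S^*$ need not lie in the form domain $\dom\bar\st(S)$, so they cannot be captured through the Friedrichs form. The adjoint computation above sidesteps this difficulty entirely by trading the analytic description of $\dom S_{\rm F}$ for the algebraic identity $S_{\rm F}=S\hplus(\{0\}\times\mul S^*)$ combined with selfadjointness of $S_{\rm F}$; recognizing that this is the efficient route is the crux of the argument.
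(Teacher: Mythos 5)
Your proof is correct, and parts (a) and (b) follow essentially the same route as the paper: the inclusion $S_{\rm F}\subset\{\{\varphi,\varphi'\}\in S^*:\varphi\in\dom\bar\st(S)\}$ from \eqref{fri}, the reverse inclusion via Corollary~\ref{lemmon} together with maximality of selfadjoint relations, and for (b) the observation that $\{\{\varphi,\varphi'\}\in S^*:\varphi\in\dom S\}=S\hplus(\{0\}\times\mul S^*)$, which you phrase equivalently as a subtraction argument using $\mul S_{\rm f}=\mul S^*$.

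The one genuine divergence is in (c). The paper disposes of it in one line: $S_{\rm f}=S_{\rm F}$ iff $S_{\rm f}$ is selfadjoint (both being sandwiched as in \eqref{hardf+}), and then cites \cite[Lemma 1.5.7]{BHS} for the equivalence of selfadjointness of $S\hplus(\{0\}\times\mul S^*)$ with \eqref{kurt1}. You instead give a self-contained argument: the forward direction via the chain $\dom S\subset\dom S_{\rm F}\subset\cdom S\cap\dom S^*$ and the elementary fact that nested relations with equal domains and equal multivalued parts coincide; the converse via $(R_1\hplus R_2)^*=R_1^*\cap R_2^*$ and $(\{0\}\times\mul S^*)^*=\cdom S\times\sH$, yielding $\dom S_{\rm F}^*=\cdom S\cap\dom S^*$. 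This is in effect a proof of the cited lemma, and it is correct; what it buys is independence from the external reference, at the cost of a slightly longer argument. Your closing remark that a ``direct'' analytic attack on $\cdom S\cap\dom S^*\subset\dom S$ would founder is well taken, and the adjoint computation is indeed the efficient route -- it is the one underlying the lemma the paper invokes.
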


\begin{proof}
(a) It is clear from $S \subset S_{\rm F}$ that $S_{\rm F} \subset S^*$.
Moreover,  it follows from
\eqref{fri} that $\dom S_{\rm F} \subset \dom Q_c^{**}=\dom \bar{\st}(S)$.
Thus $S_{\rm F}$ belongs to the right-hand side of \eqref{kurtplus}.
However, observe that
\[
 (\varphi', \varphi)=\bar{\st}(S)[\varphi, \varphi] \in \dR,
 \quad \{\varphi, \varphi'\} \in S^*,
 \quad \varphi \in \dom \bar{\st}(S),
\]
cf. Corollary \ref{lemmon}, so that the right-hand side of \eqref{kurtplus}
is a symmetric relation. Therefore, \eqref{kurtplus} has been shown.

Let $H\in \bL(\sH)$ be a selfadjoint extension of $S$ with  \eqref{kurt}. Then it follows from
\eqref{kurtplus} that $H \subset S_{\rm F}$. Since both relations are selfadjoint, one
concludes that $H=S_{\rm F}$.

(b)  It is clear from $S \subset S_{\rm f}$ that $S_{\rm f} \subset S^*$.
Moreover, it follows from  \eqref{friw}
that $\dom S_{\rm f} \subset \dom Q_c =\dom S$.
Hence one obtains
\begin{equation}\label{ja}
 S_{\rm f} \subset \big\{ \{\varphi, \varphi'\} \in S^* :\, \varphi \in \dom S \,\big\}.
\end{equation}
Now observe that
\[
\big\{ \{\varphi, \varphi'\} \in S^* :\, \varphi \in
\dom S \,\big\}=S \hplus (\{0\} \times \mul S^*) \subset S_{\rm f},
\]
so that \eqref{ja} gives \eqref{kurtplus0}.

Let $H \in \bL(\sH)$ be a symmetric extension of $S$ with  \eqref{kurt0}.
Then it follows from \eqref{ja} that $H \subset S_{\rm f}$.

(c) Thanks to \eqref{hardf+}, it is clear that $S_{\rm f} = S_{\rm F}$
if and only if $S_{\rm f}$ is selfadjoint,
which is equivalent to $\dom S=\cdom S \cap \dom S^*$; cf. \cite[Lemma 1.5.7]{BHS}.
\end{proof}

The linear subspace $\ran (S_{\rm F}-c)^{\half}$ can be characterized
in terms of the representing map $Q_c$ for $\st(S) \in \bF(\sH)$.
By \eqref{fri} one has $S_{\rm F}-c=Q_c^*Q_c^{**}$ and therefore
\[
\ran (S_{\rm F}-c)^{\half}=\ran Q_c^*.
\]
Thus the following result is clear from Theorem~\ref{kerFr}; cf. \cite{H,HSS2010, PS,SSi}.

\begin{corollary}\label{SFhalf}
Let $S\in \bLR(\sH)$ be as in Theorem {\rm \ref{friedext}} and let $S_{\rm F}$ be its Friedrichs extension.
Then $\varphi\in \ran (S_{\rm F}-c)^{\half}$ if and only if there exists $C_\varphi <\infty$ such that
\[%begin{equation}\label{halfFr2}
  |(\psi, \varphi)|^2 \leq  C_\varphi(\psi' , \psi) \text{ for all } \{\psi,\psi'\}\in S-c.
\]%end{equation}
\end{corollary}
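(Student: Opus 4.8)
The plan is to reduce Corollary~\ref{SFhalf} to Theorem~\ref{kerFr} by identifying the range of the square root $(S_{\rm F}-c)^{\half}$ with the range of $Q_c^*$. The essential observation, already recorded in the excerpt, is that $S_{\rm F}-c = Q_c^* Q_c^{**}$ by \eqref{fri}. Since $Q_c^{**}=(Q_c^*)^*$ is the adjoint of the closed operator $Q_c^*$, the product $Q_c^* Q_c^{**}$ is a nonnegative selfadjoint relation, and the standard polar-decomposition identity for such products gives $\ran (Q_c^* Q_c^{**})^{\half}=\ran Q_c^*$. First I would invoke exactly this identity to obtain
\[
 \ran (S_{\rm F}-c)^{\half}=\ran (Q_c^* Q_c^{**})^{\half}=\ran Q_c^*.
\]

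Once this equality is in hand, the proof is immediate: Theorem~\ref{kerFr} already characterizes membership in $\ran Q_c^*$ by precisely the inequality
\[
 |(\psi, \varphi)|^2 \leq C_\varphi(\psi',\psi), \quad \{\psi,\psi'\}\in S-c,
\]
appearing in \eqref{halfFr}. So I would simply combine the range identity with the last statement of Theorem~\ref{kerFr} to conclude that $\varphi\in\ran(S_{\rm F}-c)^{\half}$ if and only if such a finite constant $C_\varphi$ exists. No further computation is needed beyond these two citations.

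The one point that deserves care, and which I expect to be the main (if modest) obstacle, is justifying the identity $\ran (Q_c^* Q_c^{**})^{\half}=\ran Q_c^*$ in the relation setting, where $Q_c$ need not be closed, densely defined, or single-valued, and $Q_c^*$ may be multivalued. In the classical Hilbert-space setting this is the familiar fact that $\ran (T^*T)^{\half}=\ran T^*$ for a closed densely defined operator $T$; here one applies it with $T=Q_c^{**}=(Q_c^*)^*$, so that $T^*T=Q_c^* Q_c^{**}$ is exactly $S_{\rm F}-c$. For linear relations the correct formulation of this polar-type identity, together with the fact that $Q_c^* Q_c^{**}$ is a nonnegative selfadjoint relation whose square root has the stated range, is available in the references already cited in the excerpt (see \cite{HS2023seq}, and compare \eqref{teetee}). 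Since the statement of Corollary~\ref{SFhalf} only asserts membership in $\ran (S_{\rm F}-c)^{\half}$ and this is the content the earlier theory supplies, I would keep this verification brief and refer to those sources rather than reprove the square-root range identity from scratch.
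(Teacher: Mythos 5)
Your proposal is correct and follows exactly the route the paper takes: the paper also passes from $S_{\rm F}-c=Q_c^*Q_c^{**}$ (formula \eqref{fri}) to the range identity $\ran (S_{\rm F}-c)^{\half}=\ran Q_c^*$ and then invokes the last statement of Theorem~\ref{kerFr}. Your additional remarks on justifying the square-root range identity in the relation setting only make explicit what the paper leaves as a one-line assertion.
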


Similarly, one has in terms of the representing map $Q_c$
\[
\dom (S_{\rm F}-c)^{\half}=\dom {\bar \st}(S)=\dom Q_c^{**},
\]
and this domain was  described already in Theorem \ref{kerFr};
cf. \cite[Remark~4.6]{HMS}, \cite[eq.~(7)]{H}, \cite[Corollary~5.3.4]{BHS}.
Additionaly, one has
\[
\ker (S_{\rm F}-c)=\ker ({\bar \st}(S)-c)=\ker Q_c^{**},
\]
which was also described in Theorem \ref{kerFr}.
In particular, if $S$ is nonnegative, i.e. $\gamma\geq 0$,
then one can take $c=0$;  cf. \cite[Proposition~4.10]{HMS}.

\medskip

The Friedrichs extension $S_{\rm F} \in \bL(\sH)$ guarantees the existence of at least
one semibounded selfadjoint extension of a semibounded relation $S \in \bL(\sH)$ and, in fact,
$S_{\rm F}$ has the same lower bound as $S$.
It is a consequence of  \eqref{hardf++} that $S_{\rm F}$ is an operator if and only if
$S$ is densely defined. In this case $S$ is automatically an operator and all
selfadjoint extensions of $S$ are also operators.
In the general nondensely defined case, $S_{\rm F}$
is the largest semibounded selfadjoint extension of $S$.
In particular, one should note that if $S \in \bL(\sH)$ is semibounded and selfadjoint
then $S_{\rm F}=S$.

\begin{corollary}\label{Frried}
Let $S \in \bLR(\sH)$ be a semibounded linear relation
with lower bound $\gamma \in \dR$.
Let $H \in \bL(\sH)$ be any  semibounded selfadjoint extension of $S$, then
\begin{equation}\label{INeq}
H \leq S_{\rm F}.
\end{equation}
 \end{corollary}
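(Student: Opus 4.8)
The plan is to reduce the relation inequality \eqref{INeq} to a form inequality through the dictionary \eqref{hihihi} and then to verify that form inequality by a monotonicity argument for closures. Recall that, by construction, the Friedrichs extension is the semibounded selfadjoint relation associated with the closed form $\bar{\st}(S)$, that is, $\st_{S_{\rm F}}=\bar{\st}(S)$ in the notation of \eqref{conven}. By the equivalence \eqref{hihihi}, the desired inequality $H \leq S_{\rm F}$ is the same as the form inequality $\st_H \leq \st_{S_{\rm F}}=\bar{\st}(S)$, so it suffices to establish the latter.

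First I would note that the inclusion $S \subset H$ forces the associated forms of \eqref{s-ip} to be nested. Indeed $\dom \st(S)=\dom S \subset \dom H=\dom \st(H)$, and for $\{\varphi,\varphi'\},\{\psi,\psi'\}\in S \subset H$ one has $\st(S)[\varphi,\psi]=(\varphi',\psi)=\st(H)[\varphi,\psi]$; hence $\st(S)$ is a restriction of $\st(H)$, i.e. $\st(S)\subset \st(H)$ as forms. Both forms are semibounded and closable by Proposition~\ref{cloclo}.

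Next I would pass to closures. Since $\bar{\st}(H)$ is a closed form extending $\st(H)$ and therefore extending $\st(S)$, while $\bar{\st}(S)$ is the smallest closed extension of $\st(S)$, monotonicity of the closure gives $\bar{\st}(S)\subset \bar{\st}(H)$. Invoking \eqref{hihi} for the selfadjoint relation $H$ yields $\bar{\st}(H)=\st_H$, so this inclusion reads $\st_{S_{\rm F}}=\bar{\st}(S)\subset \st_H$; in other words $\st_H$ restricts to $\st_{S_{\rm F}}$ on the smaller form domain $\dom \st_{S_{\rm F}}$.

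Finally I would translate this form inclusion into the ordering demanded by \eqref{hihihi}. The inclusion $\st_{S_{\rm F}}\subset \st_H$ means exactly that $\dom \st_{S_{\rm F}}\subset \dom \st_H$ and that $\st_H[\varphi]=\st_{S_{\rm F}}[\varphi]$ for every $\varphi\in \dom \st_{S_{\rm F}}$; in particular $\st_H[\varphi]\leq \st_{S_{\rm F}}[\varphi]$ there, which is precisely $\st_H \leq \st_{S_{\rm F}}$. Applying \eqref{hihihi} once more gives $H \leq S_{\rm F}$, as required. The only step needing care is the monotonicity of the closure in the passage $\st(S)\subset \st(H)\Rightarrow \bar{\st}(S)\subset \bar{\st}(H)$, where the minimality of $\bar{\st}(S)$ among closed extensions of $\st(S)$ is used; this is the crux of the argument, but it is routine given the results cited from \cite{BHS, HS2023b}.
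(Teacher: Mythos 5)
Your argument is correct and follows essentially the same route as the paper's own proof: both pass from $S\subset H$ to the form inclusion $\st(S)\subset\st(H)$, take closures to get $\st_{S_{\rm F}}=\bar\st(S)\subset\bar\st(H)=\st_H$, and then convert the inclusion of closed semibounded forms into the reverse ordering $\st_H\leq\st_{S_{\rm F}}$, hence $H\leq S_{\rm F}$ via \eqref{hihihi}. Your version merely spells out the minimality-of-the-closure step that the paper leaves implicit.
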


\begin{proof}
First recall that for any semibounded selfadjoint extension $H$ the corresponding
form $\st(H)$ is semibounded and that $\st(S) \subset \st(H)$. Since $H$ is selfadjoint
$\st(H)$ is closable and its closure is denoted by $\st_H$; see \eqref{conven}.
Thus it is clear that
\[
\st_{S_{\rm F}} ={\bar \st}(S) \subset {\bar \st}(H)
=\st_H \quad \mbox{and} \quad \st_H \leq \st_{S_{\rm F}},
\]
which gives the inequality \eqref{INeq}; see \cite{BHS}.
\end{proof}

\section{Forms and representing maps induced by inverses of semibounded relations}\label{dual}

In Section \ref{Fried} the explicit formula \eqref{fri} for the Friedrichs extension
of a semibounded relations $S$ and its extremality property in Corollary \ref{Frried}
was established. This was done  by means of the representing map $Q_c$ of the form $\st(S)-c$
using the fact that $\st_{S_{\rm F}}$ is the closure of the form $\st(S)$ and that
$\st_{S_{\rm F}}\subset \st_H$ for every semibounded selfadjoint extension $H$ of $S$.
 Among all semibounded selfadjoint extensions $H$ of $S$ the lower bound $m(H)$
can be any number below $\gamma$.
After fixing a real number $c\leq \gamma$ a natural question is whether there is
a minimal semibounded selfadjoint extension among all semibounded selfadjoint extensions
of $S$ satisfying $m(H) \geq c$. The most interesting case here is the case where $c=\gamma$.
This problem has a long history in the case of nonnegative operators $S$ and an affirmative answer
to the problem was given by M.G. Kre\u{\i}n in his famous papers \cite{Kr1,Kr2},
where all nonnegative selfadjoint extensions $H\geq 0$ in the densely defined case
were characterized via the operator interval
\[
 S_{\rm K} \leq H \leq S_{\rm F}.
\]
The minimal solution $S_{\rm K}$ is nowadays called the Kre\u{\i}n extension or the
Kre\u{i}n-von Neumann extension of $S$.
These results were extended to the case of nonnegative linear relations in \cite{CS}
using the key connection
\begin{equation}\label{CdeS}
  S_{\rm K}=((S^{-1})_{\rm F})^{-1}.
\end{equation}
The general semibounded situation has been treated in \cite{BHS} using the form $\st(S)$
and its closure to define $S_{\rm F}$ and the analog \eqref{coddin} of the formula \eqref{CdeS}
for the nonnegative relation $S-c$; see \cite[Sections~5.3 \& 5.4]{BHS}.
The new ingredient in the present approach is to use representing maps
for the nonnegative forms induced by $S-c$ and its inverse $(S-c)^{-1}$
with a fixed $c\leq \gamma$.
For the form $\st((S-c)^{-1})$ this leads to an explicit formula for the corresponding minimal extension
$S_{{\rm K},c}$ of $S$ in \eqref{coddin0} as will be seen in Section \ref{Kreintype}.

\medskip

Now let $S \in \bLR(\sH)$ be a semibounded linear relation with lower bound
$\gamma \in \dR$ and let $c \leq \gamma$ be fixed.
The relation $S-c$ is nonnegative and so is its inverse
\begin{equation}\label{bre00}
  (S-c)^{-1}
 =\big\{ \{\varphi'-c \varphi, \varphi\} :\, \{\varphi, \varphi'\} \in S \big\},
 \quad c \leq \gamma.
\end{equation}
The inverse $(S-c)^{-1}$ induces a nonnegative sesquilinear form
$\st((S-c)^{-1})$, whose domain is given by
\[
 \dom \st((S-c)^{-1})=\ran (S-c),
\]
and according to \eqref{s-ip}, \eqref{nuk}, and \eqref{bre00} one has for
all $\{\varphi, \varphi'\}, \{\psi,\psi'\}  \in S$,
\begin{equation}\label{bre0}
%\begin{split}
 \st((S-c)^{-1})[\varphi'-c \varphi,\psi'-c \psi]
 =(\varphi, \psi'-c \psi)=(\st(S)-c)[\varphi, \psi]
 =(Q_c \varphi, Q_c \psi).
%\end{split}
\end{equation}

The next proposition identifies a representing map and
a corresponding companion relation for the nonnegative form $\st((S-c)^{-1})$ which,
in fact, can be expressed in terms of the corresponding data for the semibounded form $\st(S)$
introduced already in Section \ref{sect2}.

\begin{proposition}\label{invData}
Let $S \in \bL(\sH)$  be a semibounded relation
with lower bound $\gamma \in \dR$ and let $c \leq \gamma$.
Let $Q_c \in \bL(\sH, \sK_c)$ be a representing map for $\st(S)-c$
with the companion relation $J_c \in \bL(\sK_c, \sH)$ in \eqref{s-J}.
Then the inverse $J_c^{-1} \in \bL(\sH, \sK_c)$ of $J_c$ is an operator and
it is a representing map for the form $\st((S-c)^{-1})$,
\begin{equation}\label{quak1}
\st((S-c)^{-1})[\eta, \zeta]
=(J_c^{-1} \eta, J_c^{-1} \zeta), \quad \eta, \zeta \in \ran (S-c).
\end{equation}
The companion relation of the representing map $J_c^{-1}$ is given by $Q_c^{-1}\in \bL(\sK_c, \sH)$.
Moreover, with orthogonal operator part of the adjoint of $Q_c^{-1}$,
\begin{equation}\label{pipip}
 ((Q_c^{-1})^*)_{\rm reg}= (I-\pi) (Q_c^{-1})^*,
\end{equation}
where $\pi$ is the orthogonal projection from $\sK_c$ onto $\mul J_c^*$ as in \eqref{jjqc},
the sesquilinear form $\ss((S-c)^{-1}) \in \bF(\sH)$ {\rm (}cf. Definition~{\rm \ref{foot}{\rm )}}
is given by
\begin{equation}\label{sss}
 \ss((S-c)^{-1})[\eta,\zeta]
 =((Q_c^{-1})^*)_{\rm reg} \,\eta, ((Q_c^{-1})^*)_{\rm reg} \, \zeta),
 \quad \eta, \zeta \in \dom (Q_c^{-1})^*.
\end{equation}
It is well-defined, nonnegative, and closed.
\end{proposition}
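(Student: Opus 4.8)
The plan is to exploit the fact that passing from the nonnegative relation $S-c$ to its inverse $(S-c)^{-1}$ simply interchanges the roles of the representing map $Q_c$ and the companion relation $J_c$, each now taken in inverse form. Everything then follows by transcribing the definitions of Section~\ref{sect2} with $Q_c$ replaced by $J_c^{-1}$ and with the lower bound constant $c$ replaced by $0$, since $(S-c)^{-1}\geq 0$. First I would record from \eqref{s-J1} that $\ker J_c=\{0\}$, so that $\mul J_c^{-1}=\ker J_c=\{0\}$ and $J_c^{-1}$ is indeed an operator, with $\dom J_c^{-1}=\ran J_c=\ran(S-c)=\dom\st((S-c)^{-1})$.

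For the representing property \eqref{quak1}, I fix $\eta=\varphi'-c\varphi$ with $\{\varphi,\varphi'\}\in S$. By the defining formula \eqref{s-J} one has $\{Q_c\varphi,\varphi'-c\varphi\}\in J_c$, hence $J_c^{-1}\eta=Q_c\varphi$, and likewise $J_c^{-1}\zeta=Q_c\psi$ for $\zeta=\psi'-c\psi$. Comparing with \eqref{bre0} then gives $\st((S-c)^{-1})[\eta,\zeta]=(Q_c\varphi,Q_c\psi)=(J_c^{-1}\eta,J_c^{-1}\zeta)$, which is \eqref{quak1}. To identify the companion relation of $J_c^{-1}$, I apply the formula \eqref{s-J} to the relation $(S-c)^{-1}$, its representing map $J_c^{-1}$, and the constant $0$: an element $\{\eta,\eta'\}\in(S-c)^{-1}$ has the form $\{\varphi'-c\varphi,\varphi\}$ and $J_c^{-1}\eta=Q_c\varphi$, so the companion relation is $\{\{Q_c\varphi,\varphi\}:\{\varphi,\varphi'\}\in S\}$. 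Since $\dom Q_c=\dom S$, this is exactly $Q_c^{-1}$, proving the third claim.

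It remains to read off the form $\ss((S-c)^{-1})$. Applying Definition~\ref{ssDef} to $(S-c)^{-1}$, with representing map $J_c^{-1}$, companion relation $Q_c^{-1}$, and lower bound $0$, yields \eqref{sss} verbatim, provided the projection $\pi$ in \eqref{pipip} is the one prescribed by that definition, namely the projection onto $\mul(Q_c^{-1})^*$. Here I would note the identification $\mul(Q_c^{-1})^*=(\dom Q_c^{-1})^\perp=(\ran Q_c)^\perp=\mul J_c^*$, the last equality being \eqref{jjqc}; thus the projection of the hypothesis genuinely agrees with the one demanded by Definition~\ref{ssDef}. The assertions that $\ss((S-c)^{-1})$ is well-defined, nonnegative, and closed are then immediate from the general properties established in Definition~\ref{ssDef} and the discussion following it.

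The computations are all routine transcriptions, so I do not expect a genuine analytic obstacle; the one point requiring care — and the one I would highlight — is the bookkeeping of constants and projections. One must check that the role of the constant $c$ is played by $0$ for the nonnegative relation $(S-c)^{-1}$, and that the projection onto $\mul J_c^*$ appearing in \eqref{pipip} really coincides with the projection onto $\mul(Q_c^{-1})^*$ required by Definition~\ref{ssDef}. Once these identifications are made, no further input is needed.
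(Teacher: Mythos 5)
Your proposal is correct and follows essentially the same route as the paper: it identifies $J_c^{-1}$ as an operator via \eqref{s-J1}, reads \eqref{quak1} off from \eqref{bre0}, recognizes $Q_c^{-1}$ as the companion relation by inverting \eqref{s-J}, and reduces \eqref{pipip}--\eqref{sss} to Definition~\ref{ssDef} through the identification $\mul (Q_c^{-1})^*=(\ran Q_c)^\perp=\mul J_c^*$. The one point you flag as needing care (the matching of the projection and the base point $0$) is exactly the point the paper's proof also settles, so nothing is missing.
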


\begin{proof}
According to \eqref{s-J} the inverse $(J_c)^{-1}$ has the form
\begin{equation}\label{s-J-}
(J_c)^{-1}=\big\{\, \{ \varphi'-c \varphi,
 Q_c \varphi\} :\, \{\varphi, \varphi'\} \in S  \,\big\}
\end{equation}
and as noted in \eqref{s-J1} $J_c^{-1}$ is an operator.
The formula \eqref{quak1} is now clear from \eqref{bre0}.

It follows from \eqref{s-J-}  that
\[
 Q_c^{-1}=\big\{\{J_c^{-1} (\psi'-c \psi), \psi\}:\, \{\psi, \psi'\} \in S \big\},
\]
so that $Q_c^{-1}$ is the companion relation (with $0$ as base point)
of the representing map $J_c^{-1}$.
The adjoint $(Q_c^{-1})^*$ of the companion relation $Q_c^{-1}$ is closed by definition
and due to the identity
\[%begin{equation}\label{mullii}
\mul (Q_c^{-1})^*=(\ran Q_c)^\perp =\mul  J_c^*,
\]%end{equation}
its orthogonal operator part is given by \eqref{pipip}.
Therefore, Definition \ref{ssDef} can be applied to the form $\ss((S-c)^{-1})$ and this yields
a well-defined nonnegative closed form defined on the linear space $\dom (Q_c^{-1})^*$
given by the formula \eqref{sss}.
\end{proof}

It follows from Proposition~\ref{invData} that the operator $J_c^{-1}$
and the relation $Q_c^{-1}$ form a dual pair
\begin{equation}\label{s-JQ+}
  J_c^{-1} \subset (Q_c^{-1})^*   \quad \mbox{or, equivalently,} \
  \quad Q_c^{-1} \subset (J_c^{-1})^*;
\end{equation}
cf. \eqref{s-JQ}.
Moreover, the relation $Q_c^{-1} J_c^{-1}$ is an extension of $(S-c)^{-1}$:
\begin{equation}\label{s-JQQ+}
 (S-c)^{-1} \subset Q_c^{-1} J_c^{-1},
\end{equation}
which is symmetric as follows from \eqref{s-JQ+}; cf. \eqref{s-JQQ}.
The analog of the pairing stated in \eqref{foot}, when applied
to the operator $J_c^{-1}$ and the relation $((Q_c)^{-1})^*$
in \eqref{s-JQ+}, reads as
\[%begin{equation}\label{foot+}
 J_c^{-1} \subset ((Q_c^{-1})^*)_{\rm reg}.
\]%end{equation}
Thus the form $\st((S-c)^{-1})$, as a restriction of the closed form $\ss((S-c)^{-1})$,
is closable and its closure is given by
\begin{equation}\label{foot11}
{\bar \st}((S-c)^{-1})[\eta, \zeta] = ((J_c^{-1})^{**} \eta, (J_c^{-1})^{**} \zeta),
\end{equation}
for all $\eta, \zeta \in \dom {\bar \st}((S-c)^{-1})=\dom (J_c^{-1})^{**}$.

\medskip

In the terminology of Section \ref{Fried}, applied to the closable form $\st((S-c)^{-1})$,
it follows from \eqref{quak1} that the weak Friedrichs extension
$((S-c)^{-1})_{\rm f}$ of $(S-c)^{-1}$ is given by
\begin{equation}\label{kkt1}
((S-c)^{-1})_{\rm f}  =(J_c^{-1})^{*} (J_c^{-1}),
\end{equation}
while, similarly, it follows from \eqref{foot11}
that the Friedrichs extension $((S-c)^{-1})_{\rm F}$
of $(S-c)^{-1}$  is given by
 \begin{equation}\label{kkt2}
((S-c)^{-1})_{\rm F} = (J_c^{-1})^{*} (J_c^{-1})^{**}.
\end{equation}
According to \eqref{s-JQQ+} these two nonnegative relations are extensions
of $(S-c)^{-1}$, they satisfy
\[%begin{equation}\label{kkt3}
 (S-c)^{-1} \subset ((S-c)^{-1})_{\rm f} \subset ((S-c)^{-1})_{\rm F},
\]%end{equation}
and they all have the same lower bound.

\section{A characterization of the  Kre\u{\i}n type extensions}\label{Kreintype}

Let $S \in \bLR(\sH)$ be a semibounded relation with lower bound $\gamma \in \dR$
and let $\ss(S) \in \bF(\sH)$ be the corresponding closed form with representing map
$ (J_c^*)_{\rm reg}$ as in \eqref{ss}. Then the semibounded selfadjoint relation
$S_{{\rm K},c}$, determined by the closed form ${\ss}(S)$, is clearly given by
\begin{equation}\label{kren2}
S_{{\rm K},c}=c+((J_c^{*})_{\rm reg})^* (J_c^{*})_{\rm reg}=c+J_c^{**} J_c^{*},
\end{equation}
where the last identity follows from Corollary \ref{indep}.
By Corollary \ref{indep} the relation $S_{{\rm K},c}$
does not depend on the particular
choice of representing map $Q_c$; cf. \cite{HS2023b}.
It is a direct consequence of \eqref{s-JQQ}
(using $Q_c \subset J_c^*$ in \eqref{s-JQQ})  that $S_{{\rm K},c}$
is an extension of $S$, i.e.,
\[%begin{equation}\label{hardk+}
S \subset S_{{\rm K},c}.
\]%end{equation}
The multivalued part $\mul S_{{\rm K},c}$ satisfies
\[%begin{equation}\label{kren44}
 \mul S_{{\rm K},c}=\mul J_c^{**},
\]%end{equation}
and the formula \eqref{mulJ**} in Theorem~\ref{lemin} offers a description for this.
By Proposition \ref{cloclo} the lower bound of $\ss(S)$ is $c$ and consequently $c$
is also the lower bound of $S_{{\rm K},c}$; cf. \cite{HS2023b}.
The semibounded selfadjoint extension $S_{{\rm K},c}$ is the so-called
\textit{Kre\u{\i}n type extension} relative to the point $c \leq \gamma$.
By Corollary \ref{Frried} one sees immediately that $S_{{\rm K},c} \leq S_{\rm F}$.
From the right-hand side of \eqref{kren2} it follows that
\[
S_{{\rm K},c}-c=J_c^{**} J_c^{*}
= \left( (J_c^{-1})^* (J_c^{-1})^{**} \right)^{-1}=\left( ((S-c)^{-1})_{\rm F} \right)^{-1},
\]
where for the last identity \eqref{kkt2} was used.
Therefore, one concludes that the Kre\u{\i}n type extension
$S_{{\rm K},c}$ can be expressed in terms of the
Friedrichs extension induced by the closable form $\st((S-c)^{-1})$:
\begin{equation}\label{codding}
 S_{{\rm K},c}= c+(((S-c)^{-1})_{\rm F})^{-1}.
\end{equation}
Let $T$ be any linear extension $T$ of $S$ with $S \subset T \subset S_{{\rm K},c}$.
In conjunction with \eqref{foot1}
it becomes clear that
\[%begin{equation}\label{foot2}
 {\bar \st}((\overline{T}-c)^{-1})={\bar \st}((S-c)^{-1})
 \quad \mbox{and} \quad
 (\overline{T})_{{\rm K},c}=S_{{\rm K},c},
\]%end{equation}
since $\clos (T-c)^{-1}=(\overline{T}-c)^{-1}$.
In particular, one has $ (\overline{S})_{{\rm K},c}=S_{{\rm K},c}$.
There is also a weak version of the Kre\u{\i}n type extension,
but it has to be approached in an indirect way (since the form $\ss(S)$ is closed).
Inspired by the definition in \eqref{kren2}
the semibounded symmetric relation $S_{{\rm k},c}$ will be introduced as follows
\begin{equation}\label{kren1}
S_{{\rm k},c}:=c+J_c (J_c^{*})_{\rm reg}=c+J_c J_c^{*},
\end{equation}
where the last identity follows from Corollary \ref{indep}.
By Corollary \ref{indep} the relation $S_{{\rm k},c}$ does not depend on the particular
choice of representing map $Q_c$.
It is a direct consequence of \eqref{s-JQQ} or, equivalently, of \eqref{s-JQQ+}
(using $Q_c \subset J_c^*$) that $S_{{\rm k},c}$
is an extension of $S$ and that, in fact,
\[%begin{equation}\label{kreinnn}
  S \subset S_{{\rm k},c}=c+J_c J_c^{*}\subset S_{{\rm K}, c}=c+J_c^{**} J_c^{*}.
\]%end{equation}
Moreover, \eqref{s-J1} together with Theorem \ref{lemin} implies that the multivalued parts satisfy
\[%begin{equation}\label{kren4}
\mul S_{{\rm k},c}=\ran (S-c) \cap \mul S^* \subset  \mul S_{{\rm K},c} = \mul J_c^{**}.
\]%end{equation}
Since $m(S_{{\rm K},c})=c$, it follows from \eqref{kren2} and \eqref{kren1}
that $m(S_{{\rm k},c})=c$.
The semibounded selfadjoint extension $S_{{\rm k},c}$ is called the
\textit{weak Kre\u{\i}n type extension} relative to the point $c \leq \gamma$.
From the right-hand side of \eqref{kren1} it follows that
\[
S_{{\rm k},c}-c=J_c J_c^{*}= \left( (J_c^{-1})^* J_c^{-1}\right)^{-1}
=\left( ((S-c)^{-1})_{\rm f} \right)^{-1},
\]
where for the last identity \eqref{kkt1} was used. Therefore one concludes
 \begin{equation}\label{frii1}
S_{{\rm k},c}=c+(((S-c)^{-1})_{\rm f})^{-1}.
\end{equation}
Although $S_{{\rm k},c}$ has been defined directly in \eqref{kren1},
it is now clear from
\[
 (S_{{\rm k},c}-c)^{-1}=((S-c)^{-1})_{\rm f},
\]
that $(S_{{\rm k},c}-c)^{-1}$ can actually be expressed
in terms of the weak Friedrichs extension induced
by the closable form $\st((S-c)^{-1})$.

\medskip

With \eqref{kren2} and \eqref{kren1} in mind, it is now possible
to translate Theorem \ref{friedext} to the present context.
Thus the extensions $S_{{\rm k},c}$ and $S_{{\rm K},c}$ are intrinsically
characterized in the following theorem. Remember for (a) and (b) that
\[
\ran (S-c)=\dom \st((S-c)^{-1}) \subset \dom {\bar \st} ((S-c)^{-1})
\]

\begin{theorem}\label{neumanext}
Let $S \in \bLR(\sH)$ be a semibounded linear relation
with lower bound $\gamma \in \dR$
and let $\st(S)$ be the corresponding semibounded form in \eqref{s-ip}.
Let $c \leq \gamma$.
 Then the following statements hold:
 \begin{enumerate} \def\labelenumi{\rm(\alph{enumi})}
 \item
The Kre\u{\i}n type extension $S_{{\rm K}, c} \in \bL(\sH)$
of $S$ in \eqref{kren2} is given by
\begin{equation}\label{kurttplus}
 S_{{\rm K},c}=\big\{ \{\varphi, \varphi'\} \in S^*:\,
 \varphi' -c \varphi \in \dom \bar \st((S-c)^{-1}) \big\}.
\end{equation}
 If $H \in \bL(\sH)$ is a selfadjoint extension of $S$ with
\[%begin{equation}\label{kurtt}
 \ran (H-c) \subset \dom \bar \st((S-c)^{-1}),
\]%end{equation}
then $H=S_{{\rm K},c}$.

\item
The weak Kre\u{\i}n type extension
$S_{{\rm k},c} \in \bL(\sH)$
of $S$ in \eqref{kren1}  is  given by
\begin{equation}\label{kurttpluss}
 S_{{\rm k},c}=S \hplus \wh \sN_c(S^*).
 \end{equation}
 If $H \in \bL(\sH)$ is a symmetric extension of $S$ with
\[
 \ran (H-c) \subset \dom \st((S-c)^{-1})=\ran (S-c),
\]
then $H \subset S_{{\rm k},c}$.
\item
For all $c < \gamma$
\[%begin{equation}\label{kurttplus+-}
 S_{{\rm K},c}= \overline{S} \hplus \wh \sN_c(S^*),
\]%end{equation}
and the identity $S_{{\rm k},\gamma} = S_{{\rm K},\gamma}$ holds
if and only if
\begin{equation}\label{kurttplus+}
 \ran (S-\gamma)=\cran (S-\gamma) \cap \ran (S^*-\gamma).
\end{equation}
In particular the identity  $S_{{\rm k},\gamma} = S_{{\rm K},\gamma}$
holds if $\ran (S-\gamma)$ is closed.
\end{enumerate}
 \end{theorem}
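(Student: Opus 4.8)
The plan is to obtain Theorem~\ref{neumanext} as the mirror image of Theorem~\ref{friedext} under the correspondence $S \leftrightarrow (S-c)^{-1}$. The bridge is provided by the connecting identities \eqref{codding} and \eqref{frii1}, namely $S_{{\rm K},c}-c = (((S-c)^{-1})_{\rm F})^{-1}$ and $S_{{\rm k},c}-c = (((S-c)^{-1})_{\rm f})^{-1}$, together with the elementary adjoint identity $((S-c)^{-1})^* = (S^*-c)^{-1}$, so that $\dom ((S-c)^{-1})^* = \ran (S^*-c)$ and $\mul ((S-c)^{-1})^* = \ker (S^*-c)$. For the intrinsic descriptions in (a) and (b) I would apply parts (a), (b) of Theorem~\ref{friedext} to the nonnegative relation $(S-c)^{-1}$ and then invert and translate by $c$. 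Thus Theorem~\ref{friedext}(a) gives $((S-c)^{-1})_{\rm F} = \{\{\eta,\zeta\}\in (S^*-c)^{-1}:\, \eta\in \dom {\bar \st}((S-c)^{-1})\}$, and reading $\{\varphi'-c\varphi,\varphi\}$ in the role of $\{\eta,\zeta\}$ turns this into \eqref{kurttplus}; the uniqueness clause transports using $\dom (H-c)^{-1}=\ran(H-c)$. Part (b) follows identically from Theorem~\ref{friedext}(b), the term $\{0\}\times\mul ((S-c)^{-1})^*$ becoming $\wh\sN_c(S^*)$ after inversion and shift, which gives \eqref{kurttpluss}.

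For the formula in (c) valid for $c<\gamma$, the key observation is that $S-c\ge \gamma-c>0$, so $(S-c)^{-1}$ is a bounded nonnegative operator: its representing map $J_c^{-1}$ from Proposition~\ref{invData} satisfies $\|J_c^{-1}\eta\|^2=\st((S-c)^{-1})[\eta]\le (\gamma-c)^{-1}\|\eta\|^2$ on $\ran(S-c)$. Hence the closable form $\st((S-c)^{-1})$ is bounded and its closure is the continuous extension to the closed domain, giving $\dom {\bar \st}((S-c)^{-1})=\cran(S-c)$ (this also follows from Theorem~\ref{kerFr} applied to $(S-c)^{-1}$). Feeding this into part~(a) yields $S_{{\rm K},c}=\{\{\varphi,\varphi'\}\in S^*:\, \varphi'-c\varphi\in\cran(S-c)\}$. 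I would then identify this set with $\overline{S}\hplus\wh\sN_c(S^*)$. The inclusion $\supset$ is immediate, since both summands lie in $S^*$ and produce $\varphi'-c\varphi\in\cran(S-c)$ ($\varphi'-c\varphi=0$ on $\wh\sN_c(S^*)$). For $\subset$ I use that $\overline{S}-c=\overline{S-c}$ is closed with positive lower bound and therefore has closed range; since $\ran(S-c)\subset\ran(\overline{S-c})\subset\cran(S-c)$, this forces $\ran(\overline{S-c})=\cran(S-c)$. Thus for $\{\varphi,\varphi'\}$ in the set above one can solve $f'-cf=\varphi'-c\varphi$ with $\{f,f'\}\in\overline{S}$, after which $\{\varphi-f,\varphi'-f'\}\in\wh\sN_c(S^*)$ and $\{\varphi,\varphi'\}=\{f,f'\}+\{\varphi-f,\varphi'-f'\}$; the two summands meet only in $\{0\}$ because $\overline{S}\subset S_{\rm F}$ has lower bound $\gamma>c$, so the sum is direct.

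The equality statement at $c=\gamma$ is the exact dual of Theorem~\ref{friedext}(c). By \eqref{codding} and \eqref{frii1}, $S_{{\rm k},\gamma}=S_{{\rm K},\gamma}$ holds if and only if $((S-\gamma)^{-1})_{\rm f}=((S-\gamma)^{-1})_{\rm F}$. Applying Theorem~\ref{friedext}(c) to $(S-\gamma)^{-1}$, this is equivalent to $\dom (S-\gamma)^{-1}=\cdom (S-\gamma)^{-1}\cap \dom ((S-\gamma)^{-1})^*$, which, via $\dom (S-\gamma)^{-1}=\ran(S-\gamma)$, $\cdom (S-\gamma)^{-1}=\cran(S-\gamma)$ and $\dom ((S-\gamma)^{-1})^*=\ran(S^*-\gamma)$, is precisely \eqref{kurttplus+}. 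The final ``in particular'' is the transported version of the corresponding clause in Theorem~\ref{friedext}(c): if $\ran(S-\gamma)$ is closed, then $\dom (S-\gamma)^{-1}$ is closed and the equality follows.

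The routine transcriptions in (a), (b) and the equality statement at $\gamma$ carry essentially no risk, as they are formal applications of Theorem~\ref{friedext} through the inversion dictionary. I expect the main obstacle to be the explicit decomposition in (c) for $c<\gamma$: one must justify carefully that boundedness of $(S-c)^{-1}$ gives $\dom {\bar \st}((S-c)^{-1})=\cran(S-c)$, and that the closed range of the strictly positive closed relation $\overline{S-c}$ fills out \emph{all} of $\cran(S-c)$, since it is exactly this surjectivity that allows every element of the set produced by part~(a) to be split off against $\overline{S}$, leaving a remainder in $\wh\sN_c(S^*)$.
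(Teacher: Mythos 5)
Your proposal is correct and follows the same main route as the paper: parts (a), (b), and the equivalence at $c=\gamma$ in (c) are obtained, exactly as in the paper's proof, by applying Theorem~\ref{friedext} to the nonnegative relation $(S-c)^{-1}$ and transporting the result back through the connecting identities \eqref{codding} and \eqref{frii1} together with $((S-c)^{-1})^*=(S^*-c)^{-1}$. The one place where you genuinely diverge is the identity $S_{{\rm K},c}=\overline{S}\hplus\wh\sN_c(S^*)$ for $c<\gamma$: the paper notes that the inclusion $\overline{S}\hplus\wh\sN_c(S^*)\subset S_{{\rm K},c}$ is clear and then concludes equality from the fact that both sides are selfadjoint, citing \cite[Lemma 5.4.1]{BHS} for the selfadjointness of the componentwise sum. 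You instead argue constructively: boundedness of the form $\st((S-c)^{-1})$ (with bound $(\gamma-c)^{-1}$) gives $\dom\bar\st((S-c)^{-1})=\cran(S-c)$, the strictly positive closed relation $\overline{S-c}$ has closed range equal to $\cran(S-c)$, and every element of the set in \eqref{kurttplus} then splits as a sum of an element of $\overline{S}$ and an element of $\wh\sN_c(S^*)$, the sum being direct since $c<\gamma$. All the steps check out (in particular $\|f'\|\geq(\gamma-c)\|f\|$ for $\{f,f'\}\in\overline{S-c}$ does yield closed range, and the intersection argument via the lower bound is sound). Your version is self-contained and more elementary, at the cost of being longer; the paper's is shorter but leans on an external lemma for the selfadjointness of $\overline{S}\hplus\wh\sN_c(S^*)$.
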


\begin{proof}
By Theorem \ref{friedext} one sees that
\[
  ((S-c)^{-1})_{\rm F}
  =\big\{ \{\varphi, \varphi'\} \in ((S-c)^{-1})^* :\,
  \varphi \in \dom \bar{\st}(S-c)^{-1}) \,\big\}
\]
and, likewise,
\[
 ((S-c)^{-1})_{\rm f}=(S-c)^{-1} \hplus \left(\{0\} \times \ker (S^*-c) \right).
\]
It follows from \eqref{frii1} and \eqref{codding} that
\[
(S_{\rm k,c}-c)^{-1}=((S-c)^{-1})_{\rm f},
\quad \mbox{and} \quad
(S_{\rm K,c}-c)^{-1}=((S-c)^{-1})_{\rm F},
\]
and then, by Theorem \ref{friedext},
\eqref{kurttpluss} and \eqref{kurttplus} follow.
Also by Theorem  \ref{friedext} one has for $c \leq \gamma$ that
$S_{{\rm k},c} = S_{{\rm K},c}$ if and only if \eqref{kurttplus+} holds.
\[
 \ran (S-c)=\cran (S-c) \cap \ran (S^*-c)
\]
holds. For every $c \leq \gamma$ the inclusion
\[
 \overline{S} \hplus \wh \sN_c(S^*) \subset  S_{{\rm K},c}
\]
is clear, and for $c < \gamma$ there is actually equality
as both sides are selfadjoint; cf. \cite[Lemma 5.4.1]{BHS}.
\end{proof}

The linear subspace  $\dom (S_{{\rm K},c}-c)^{\half}$
can be characterized in terms of the representing map
$(J_c^*)_{\rm reg}$ for $\ss(S) \in \bF(\sH)$.
By \eqref{kren2} one has $S_{{\rm K},c}-c=J_c^{**}J_c^{*}$
and therefore
\[
\dom(S_{{\rm K},c}-c)^{\half}=\dom J_c^*.
\]
Thus the following result is clear  from Theorem~\ref{lemin},
cf. \cite{AN,AHSS,H,HMS, HSS2010}.

\begin{corollary}\label{SKhalf}
Let $S\in \bLR(\sH)$ be as in Theorem {\rm \ref{neumanext}}
and let $S_{{\rm K},c}$ be its Kre\u{\i}n type extension.
Then $\psi'\in \dom (S_{{\rm K},c}-c)^{\half}$ if and only if
there exists $C_{\psi'} <\infty$ such that
\begin{equation}\label{halfKr}
  |(\varphi',\psi')|^2 \leq  C_{\psi'}(\varphi' , \varphi)
  \quad \text{ for all }\; \{\varphi,\varphi'\}\in S-c.
\end{equation}
\end{corollary}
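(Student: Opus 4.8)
The plan is to reduce Corollary~\ref{SKhalf} directly to Theorem~\ref{lemin}, exactly as the paragraph preceding the statement indicates. First I would recall the identity $S_{{\rm K},c}-c=J_c^{**}J_c^{*}$ from \eqref{kren2}. Since $J_c^{*}$ is a closed operator (it is the adjoint of the companion relation $J_c$, whose orthogonal operator part appears in \eqref{foot}), the nonnegative selfadjoint relation $J_c^{**}J_c^{*}$ admits a square root, and the standard polar-decomposition identity for closed relations gives
\[
 \dom \left(J_c^{**}J_c^{*}\right)^{\half}=\dom J_c^{*}.
\]
Hence $\dom (S_{{\rm K},c}-c)^{\half}=\dom J_c^{*}$, which is the key bridge: the abstract square-root domain on the left is identified with the domain of the adjoint companion relation on the right.

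With this identification in hand, the statement to be proved, namely that $\psi'\in\dom(S_{{\rm K},c}-c)^{\half}$ is equivalent to the estimate \eqref{halfKr}, becomes precisely the equivalence between $\psi'\in\dom J_c^{*}$ and the inequality \eqref{domJ*} of Theorem~\ref{lemin}. I would therefore simply invoke that theorem: it states that $\psi'\in\dom J_c^{*}$ holds if and only if there exists $C_{\psi'}<\infty$ with
\[
  |(\varphi',\psi')|^2 \leq  C_{\psi'}(\varphi' , \varphi)
  \quad \text{ for all }\; \{\varphi,\varphi'\}\in S-c,
\]
which is verbatim the condition \eqref{halfKr}. Combining the two displayed facts completes the argument.

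The step I expect to require the most care is the square-root domain identity $\dom(J_c^{**}J_c^{*})^{\half}=\dom J_c^{*}$. One must confirm it holds for relations $J_c^{*}$ that may be non-densely defined and that the closedness of $J_c^{*}$ (guaranteed since adjoints are always closed) is what makes $J_c^{**}J_c^{*}$ a bona fide nonnegative selfadjoint relation with a well-defined square root whose domain is $\dom J_c^{*}$; this is the relational analogue of the classical fact $\dom(T^{*}T)^{\half}=\dom T$ for closed $T$, and it is exactly the form in which the parallel result $\dom(S_{\rm F}-c)^{\half}=\dom Q_c^{**}$ was used just after Corollary~\ref{SFhalf}. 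Everything else is bookkeeping: no minimality assumption on $Q_c$ is needed because $S_{{\rm K},c}$ is independent of the choice of representing map by Corollary~\ref{indep}, and the translation invariance is absorbed into the constant $c$. Thus the proof is short, consisting of the square-root domain identity followed by a direct appeal to the characterization \eqref{domJ*} in Theorem~\ref{lemin}.
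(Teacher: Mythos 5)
Your proposal is correct and follows the paper's own route exactly: identify $S_{{\rm K},c}-c=J_c^{**}J_c^{*}$ via \eqref{kren2}, deduce $\dom (S_{{\rm K},c}-c)^{\half}=\dom J_c^{*}$ from the square-root domain identity for the closed representing map $(J_c^*)_{\rm reg}$, and then quote the characterization \eqref{domJ*} of $\dom J_c^*$ in Theorem~\ref{lemin}. No gaps.
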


The description of $\dom (S_{{\rm K},c}-c)^{\half}$ can also be obtained from Corollary \ref{SFhalf}
by means of the identity \eqref{codding}.  Moreover, it follows from  \eqref{codding} that
\[
\ran (S_{{\rm K},c}-c)^{\half}=\dom (((S-c)^{-1})_{\rm F})^{\half}=\dom {\bar \st}((S-c)^{-1}),
\]
where the right-hand side can be described by means of Theorem \ref{kerFr}. 

\medskip

Let $S \in \bLR(\sH)$ be semibounded with lower bound $\gamma \in \dR$
and let $c \leq \gamma$.
The Kre\u{\i}n type extension $S_{{\rm K},c}$ of $S$ has lower bound $c$
and, in fact, it is the smallest semibounded selfadjoint
extension of $S$ which is bounded below by $c$.

\begin{corollary}\label{Krrein}
Let $S \in \bLR(\sH)$ be a semibounded linear relation with lower bound
$\gamma \in \dR$ and $c \leq \gamma$.
Let $H \in \bL(\sH)$ be a  semibounded selfadjoint extension of $S$,
then there is the equivalence
\[%begin{equation}\label{INeq1}
   c \leq H  \quad \Leftrightarrow \quad
 \st_{S_{{{\rm K},c}}} \leq \st_{H}.
\]%end{equation}
\end{corollary}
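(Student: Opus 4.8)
The plan is to translate the claimed equivalence into an ordering of the selfadjoint relations themselves by means of \eqref{hihihi}, and then to read off the two implications from the lower bound $m(S_{{\rm K},c})=c$ and from the Friedrichs extremality of Corollary \ref{Frried}, the latter applied not to $S$ but to the inverse relation $(S-c)^{-1}$. In this way the minimality of the Kre\u{\i}n type extension is obtained as the mirror image of the maximality of the Friedrichs extension, exactly as the connecting identity \eqref{codding} suggests.

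The reverse implication is immediate. If $\st_{S_{{\rm K},c}} \leq \st_H$, then \eqref{hihihi} gives $S_{{\rm K},c} \leq H$; since the lower bound of $S_{{\rm K},c}$ is $c$, one has $c \leq S_{{\rm K},c}$, and transitivity of the ordering yields $c \leq H$. For the forward implication I would argue as follows. Suppose $c \leq H$. Then $H-c$ is a nonnegative selfadjoint extension of the nonnegative relation $S-c$, and hence its inverse $(H-c)^{-1}$ is a nonnegative selfadjoint extension of $(S-c)^{-1}$, obtained by translating $H \supset S$ by $c$ and inverting. Applying Corollary \ref{Frried} to $(S-c)^{-1}$, whose Friedrichs extension was described in Section \ref{dual}, every such extension is dominated by the Friedrichs extension, so that $(H-c)^{-1} \leq ((S-c)^{-1})_{\rm F}$.

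Since inversion reverses the ordering of nonnegative selfadjoint relations, this yields $(((S-c)^{-1})_{\rm F})^{-1} \leq H-c$; by \eqref{codding} the left-hand side is precisely $S_{{\rm K},c}-c$, whence $S_{{\rm K},c} \leq H$, and a final application of \eqref{hihihi} delivers $\st_{S_{{\rm K},c}} \leq \st_H$. The one step that carries genuine content is this order-reversing property of the inverse for nonnegative selfadjoint relations, $A \leq B \Rightarrow B^{-1} \leq A^{-1}$: it is exactly what converts the maximality of the Friedrichs extension of $(S-c)^{-1}$ into the minimality of $S_{{\rm K},c}$, and it is where selfadjointness and semiboundedness are really used. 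I would therefore invoke the relation-theoretic form of this monotonicity from \cite{BHS}; everything else is formal, since translation by the constant $c$ preserves the ordering and the inverse of a relation always exists, so no invertibility hypotheses intervene.
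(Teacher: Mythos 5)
Your proof is correct and follows essentially the same route as the paper: pass to $(S-c)^{-1}\subset (H-c)^{-1}$, apply the Friedrichs maximality (Corollary \ref{Frried}) to the inverse relation, and use antitonicity of inversion for nonnegative selfadjoint relations together with \eqref{codding} to conclude $S_{{\rm K},c}\leq H$. The converse via $m(S_{{\rm K},c})=c$ is the "straightforward" direction the paper leaves to the reader, and your version of it is fine.
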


\begin{proof}
($\Rightarrow$) Assume that $c \leq H$.  Since $S \subset H$,
it follows that  $(S-c)^{-1} \subset (H-c)^{-1}$.
Thus also $(H-c)^{-1} \leq ((S-c)^{-1})_{\rm F}$ (see \eqref{INeq}) and,
by antitonicity, one obtains
\[
(((S-c)^{-1})_{\rm F})^{-1} \leq ((H-c)^{-1})^{-1}=H-c,
\]
see \cite{BHS}, or by \eqref{codding} $S_{{\rm K},c} \leq H$.
Consequently, $\st_{S_{{\rm K},c}}\leq \st_{H}$.

($\Leftarrow$)  This implication is straightforward.
\end{proof}

For a semibounded relation $S \in \bL(\sH)$
it is clear that $S_{\rm F} \in \bL(\sH)$ is an operator if and only if $S$ is densely defined;
and in this case all selfadjoint extensions of $S$ are operators.
The question when $S_{{\rm K},c}\in \bL(\sH)$ is an operator will be addressed now;
note that then $S$ is automatically an operator.
First observe, that if $S$ is a closed (or closable) operator, then by Theorem \ref{neumanext}
\[%begin{equation}\label{domKr-c}
  \dom S_{{\rm K},c} = \dom \overline{S} + \ker(S^*-c),\quad c<\gamma,
\]%end{equation}
is dense in $\sH$, and the selfadjoint relation $S_{{\rm K},c}$ with $c<\gamma$
is an operator.
The next proposition deals with the case that $S$ is an operator
which is not necessarily closed or closable;
see also \cite[Proposition~4.9]{HMS}, \cite[Corollary~5.4.8]{BHS},
\cite[Theorem~4.1]{ST2021}, and \cite[Theorem~2.1]{ST2022}.
The limit criterion \eqref{ANcr} below (with $c=0$) was introduced in
\cite{AN} for a closed nonnegative operator $S$, which was then called
\textit{positively closable}.

\begin{proposition}%\label{posclosable}
Let $S \in \bLR(\sH)$ be a semibounded operator with lower bound $\gamma \in \dR$
and let $c\leq \gamma$. Then the following conditions are equivalent:
\begin{enumerate}
\item[(i)] the Krein type extension $S_{{\rm K},c}$ is an operator or,
equivalently, the form $\ss(S)$ is densely defined;
\item[(ii)] there exists a Hilbert space $\sK$ and a closable operator $B\in\bLR(\sK,\sH)$
    such that $S=c+BB^*\upharpoonright \dom S$;
\item [(iii)] $S$ admits a selfadjoint operator extension $H$ with lower bound $m(H)=c$;
\item[(iv)] every sequence $\{\varphi_n\} \in \dom S$ satisfies the following property:
\begin{equation}\label{ANcr}
 ((S-c)\varphi_n, \varphi_n) \to 0\quad\text{and}\quad
  (S-c)\varphi_n \to \omega \quad\text{imply}\quad \omega=0.
\end{equation}
\end{enumerate}
\end{proposition}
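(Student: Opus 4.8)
The plan is to use condition (i) as a hub and prove the three equivalences (i)$\Leftrightarrow$(iv), (i)$\Leftrightarrow$(iii), and (i)$\Leftrightarrow$(ii) in turn. The two formulations inside (i) coincide because $\ss(S)=\st_{S_{{\rm K},c}}$ and a semibounded selfadjoint relation is an operator exactly when its form is densely defined; by \eqref{kren2} both amount to $\mul S_{{\rm K},c}=\mul J_c^{**}=\{0\}$, so everything reduces to deciding when $\mul J_c^{**}=\{0\}$. First I would read off (i)$\Leftrightarrow$(iv) directly from \eqref{mulJ**} in Theorem~\ref{lemin}: since $S$ is an operator one has $\varphi_n'=S\varphi_n$, so the condition $\psi'\in\mul J_c^{**}$ becomes the existence of $\varphi_n\in\dom S$ with $(S-c)\varphi_n\to\psi'$ and $((S-c)\varphi_n,\varphi_n)\to 0$. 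Hence $\mul J_c^{**}=\{0\}$ is precisely the implication displayed in \eqref{ANcr}, and the equivalence follows with no further computation.

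Next I would establish (i)$\Leftrightarrow$(iii). For (i)$\Rightarrow$(iii) the witness is $H=S_{{\rm K},c}$ itself, which under (i) is a selfadjoint operator extension of $S$ and has lower bound $m(H)=c$ as recorded after \eqref{kren1}. For (iii)$\Rightarrow$(i) I would start from $m(H)=c$, hence $c\leq H$, invoke Corollary~\ref{Krrein} to obtain $S_{{\rm K},c}\leq H$, and then use that the ordering of semibounded selfadjoint relations is monotone on multivalued parts, i.e. $A\leq B$ forces $\mul A\subseteq\mul B$ (a consequence of \eqref{hihihi} together with $\mul A=(\dom\st_A)^\perp$ for selfadjoint $A$; see \cite{BHS}). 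Since $H$ is an operator this gives $\mul S_{{\rm K},c}\subseteq\mul H=\{0\}$. I would isolate this multivalued-part argument once, as it is reused below.

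For (i)$\Leftrightarrow$(ii) the natural factorizing operator is $B=J_c^{**}\in\bL(\sK_c,\sH)$, which is closed and hence closable. Assuming (i), one has $\mul J_c^{**}=\{0\}$, so $B$ is genuinely an operator; because $B^*=J_c^{*}$ and $BB^*=J_c^{**}J_c^{*}=S_{{\rm K},c}-c$ by \eqref{kren2}, and $S-c\subseteq S_{{\rm K},c}-c$ with the latter now single-valued, restriction to $\dom S$ yields $S=c+BB^*\upharpoonright\dom S$, which is (ii). Conversely, given (ii) with a closable operator $B$, the adjoint $B^*$ is closed and densely defined, so $H:=c+B^{**}B^*$ is selfadjoint with lower bound at least $c$; it is an operator because for the closed operator $C=B^*$ one has $\mul(C^*C)=\mul C^*=(\dom B^*)^\perp=\{0\}$. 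Moreover $S-c=BB^*\upharpoonright\dom S\subseteq B^{**}B^*=H-c$ shows $S\subseteq H$, and applying the multivalued-part argument from (iii)$\Rightarrow$(i) delivers (i).

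The main obstacle will be the relational arithmetic underlying these steps rather than the logic linking them. I expect to need, in the possibly nondensely defined and multivalued setting, two structural facts handled with care: the monotonicity $A\leq B\Rightarrow\mul A\subseteq\mul B$ for semibounded selfadjoint relations, and the statement that $B^{**}B^*$ is selfadjoint with $\mul(B^{**}B^*)=(\dom B^*)^\perp$ for a closable operator $B$, which is the analogue of von Neumann's $C^*C$ theorem. Once these are secured each implication is only a few lines, and the single place where the operator hypothesis on $S$ is genuinely used is the translation $\varphi_n'-c\varphi_n=(S-c)\varphi_n$ in the passage to \eqref{ANcr}.
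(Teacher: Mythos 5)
Your proposal is correct and follows essentially the same route as the paper: both rest on the formula $S_{{\rm K},c}-c=J_c^{**}J_c^{*}$ from \eqref{kren2}, the description of $\mul J_c^{**}$ in Theorem~\ref{lemin} for (iv), the extremality statement of Corollary~\ref{Krrein} for passing from a dominating operator extension back to (i), and a von Neumann--type factorization for (ii). The only differences are organizational (a hub at (i) instead of the paper's cycle (i)$\Rightarrow$(ii)$\Rightarrow$(iii)$\Rightarrow$(i)) and the harmless choice of $B=J_c^{**}$ where the paper takes $B=J_c$.
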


\begin{proof}
According to the general representation theorem one has that
\[
\mul S_{{\rm K},c}=(\dom \ss(S))^\perp,
\]
so that $S_{{\rm K},c}$ is an operator
if and only if the form $\ss(S)$ is densely defined.

(i) $\Rightarrow$ (ii) Let $S_{{\rm K},c}$ be an operator.
By \eqref{kren2} it follows that $\mul J_c^{**}=\mul S_{{\rm K},c}$, so that
$J_c\in \bLR(\sK_c,\sH)$ is a closable operator.
Then by \eqref{oold} and Lemma~\ref{jjjj} one has $S=c+J_cQ_c$,
where $Q_c\in \bLR(\sH,\sK_c)$ is the representing map \eqref{forrep}
for the form $\st(S)-c$.
Now recall that $Q_c\subset (J_c^*)_{\rm reg}=(I-\pi)J_c^*$,
where $I-\pi$ is the orthogonal projection onto
$(\mul J_c^*)^\perp=\cran Q_c$ (see \eqref{jjqc})
and that $(J_c^*)_{\rm reg}$ is a closed operator which extends $Q_c$.
Therefore,
\[
  S=c+J_cQ_c=c+J_c((J_c^*)_{\rm reg}\upharpoonright \dom S)
  =c+J_cJ_c^*\upharpoonright \dom S,
\]
see Corollary \ref{indep}. Thus one can take $\sK=\sK_c$ and $B=J_c$.

(ii) $\Rightarrow$ (iii) Since $B$ is closable the representation for $B$ implies that
\[
 S\subset c+BB^*\subset c+B^{**}B^*,
\]
where by assumption $B^{**}B^*$ is a nonnegative selfadjoint operator.
Thus $S$ admits a selfadjoint operator extension $H=c+B^{**}B^*$ with lower bound $m(H)\geq c$.

(iii) $\Rightarrow$ (i) By the extremality property of Kre\u{\i}n type extensions
stated in Corollary \ref{Krrein} one has $\st_{S_{{{\rm K},c}}} \leq \st_{H}$ and,
equivalently, $S_{{{\rm K},c}} \leq H$, see \eqref{hihihi}.
Since $H$ is an operator the same is true for $S_{{{\rm K},c}}$.

(i) $\Leftrightarrow$ (iv) This is a direct consequence of the explicit formula \eqref{kren2}
and the description of $\mul J_c^{**}$ in Theorem~\ref{lemin}, since according to the condition
\eqref{ANcr} one has that $\mul J_c^{**}=\{0\}$.
\end{proof}

By the extremality property of Kre\u{\i}n type extensions
it is also clear that if $S_{{\rm K},\gamma}$ is an operator
then the same is true for every selfadjoint extension $S_{{\rm K},c}$ with $c<\gamma$.
Similarly, if $S_{{\rm K},\gamma}\in\bB(\sH)$
then also $S_{{\rm K},c}\in\bB(\sH)$ for all $c<\gamma$.

\section{Extremal semibounded extensions}\label{eextr}

Let $S \in \bLR(\sH)$ be a semibounded relation
with lower bound $\gamma \in \dR$
and let $c \leq \gamma$. Then $S$ has semibounded selfadjoint extensions
$S_{\rm F}$ and $S_{{\rm K},c}$,  which satisfy
 \begin{equation}\label{qqne}
\st(S) \subset {\bar \st}(S)=\st_{S_{\rm F}}  \subset \st_{S_{{\rm K},c}}.
\end{equation}
Moreover,  for any $c \leq \gamma$
a semibounded selfadjoint extension $H \in \bL(\sH)$ of $S$  satisfies
the following equivalence
\begin{equation}\label{frkr+}
   c \leq H  \quad \Leftrightarrow \quad
 \st_{S_{{\rm K},c}} \leq \st_{H} \leq \st_{S_{\rm F}},
\end{equation}
see Corollaries \ref{Frried},~\ref{Krrein}.
In fact, the form inequalities in \eqref{frkr+} imply for a
semibounded selfadjoint relation $H \in \bL(\sH)$
that it is automatically an extension of $S$; see \cite{BHS}.

\medskip

Theorem \ref{neumanext} shows that for $c<\gamma$
one has $m(S_{{\rm K},c})=c<m(S)=m(S_{\rm F})$.
In the case that $c=\gamma$ one still has
$S_{{\rm K},\gamma}\leq S_{\rm F}$. It is possible that
here equality holds and the next result contains some criteria
for this situation; cf. \cite[Theorem~4.14]{HMS},
\cite[Proposition~12]{H}, see also \cite[Corollary~5.3.10]{BHS}.

\begin{proposition}
Let $S \in \bL(\sH)$ be semibounded with lower bound $\gamma \in \dR$
and let $c \leq \gamma$. Then the following statements are equivalent:
\begin{enumerate} \def\labelenumi{\rm(\roman{enumi})}
\item the equality $S_{{\rm K},\gamma}=S_{\rm F}$ holds;
\item $\ker(S^*-c)\cap \dom (S_{{\rm K},\gamma}-\gamma)^{\half}=\{0\}$
for some {\rm (}equivalently for all{\,\rm )} $c<\gamma$;
\item for some {\rm (}equivalently for all{\,\rm )} $c<\gamma$ one has
\begin{equation}\label{limcrit}
  \sup_{\{\varphi,\varphi'\}\in S-\gamma} \dfrac{|(\varphi',h)|^2}{(\varphi',\varphi)}=\infty
  \quad \text{ for every }\; h\in \ker(S^*-\gamma).
\end{equation}
\end{enumerate}
\end{proposition}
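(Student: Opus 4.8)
The plan is to reduce statement (i) to an equality of form domains and then to read off (ii) and (iii) from the supremum characterisations of these domains, the only real work being a decomposition of the Kre\u{\i}n form domain. Since $S_{{\rm K},\gamma}\leq S_{\rm F}$ and both relations have lower bound $\gamma$, while ${\bar\st}(S)\subset\ss(S)$ by Proposition \ref{cloclo} applied at $c=\gamma$, the closed form $\ss(S)$ of $S_{{\rm K},\gamma}$ extends the closed form ${\bar\st}(S)=\st_{S_{\rm F}}$ and agrees with it on $\dom{\bar\st}(S)$. Hence the two selfadjoint relations coincide precisely when the two forms do, i.e. precisely when their domains agree. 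Recording that $\dom{\bar\st}(S)=\dom(S_{\rm F}-\gamma)^{\half}$ and $\dom\ss(S)=\dom(S_{{\rm K},\gamma}-\gamma)^{\half}$, I obtain that (i) is equivalent to
\[
 \dom(S_{{\rm K},\gamma}-\gamma)^{\half}=\dom{\bar\st}(S).
\]

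Fix now $c<\gamma$. For (i)$\Rightarrow$(ii) I would argue that the intersection is in fact always trivial: if $h\in\ker(S^*-c)\cap\dom{\bar\st}(S)$ then $\{h,ch\}\in S^*$, so Corollary \ref{lemmon} gives ${\bar\st}(S)[h,h]=c\|h\|^2$, while the lower bound $\gamma$ of ${\bar\st}(S)$ forces ${\bar\st}(S)[h,h]\geq\gamma\|h\|^2$; as $c<\gamma$ this yields $h=0$. Thus $\ker(S^*-c)\cap\dom{\bar\st}(S)=\{0\}$, and under (i) the displayed equality turns this into (ii). The equivalence (ii)$\Leftrightarrow$(iii) is then immediate from Corollary \ref{SKhalf}, by which finiteness of the supremum in \eqref{limcrit} is exactly membership of $h$ in $\dom(S_{{\rm K},\gamma}-\gamma)^{\half}$; hence (iii) (the supremum is infinite for every admissible $h\neq0$) says precisely that the relevant deficiency space meets $\dom(S_{{\rm K},\gamma}-\gamma)^{\half}$ only in $0$, which is (ii). Here one may also use the identity $(\varphi',h)=(c-\gamma)(\varphi,h)$ for $h\in\ker(S^*-c)$ and $\{\varphi,\varphi'\}\in S-\gamma$, so that by Corollary \ref{SFhalf} the same condition can be expressed through $\ran(S_{\rm F}-\gamma)^{\half}$.

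The substance lies in (ii)$\Rightarrow$(i), and the key tool I would establish is the decomposition
\[
 \dom(S_{{\rm K},c}-c)^{\half}=\dom{\bar\st}(S)\dotplus\ker(S^*-c),\qquad c<\gamma.
\]
To prove it, note that for $c<\gamma$ the form ${\bar\st}(S)-c$ has lower bound $\gamma-c>0$, so $Q_c^{**}$ is bounded below, hence injective with closed range $\ran Q_c^{**}=\cran Q_c$. By \eqref{foot} the regular part $(J_c^*)_{\rm reg}=(I-\pi)J_c^*$ maps $\dom J_c^*=\dom(S_{{\rm K},c}-c)^{\half}$ into $\cran Q_c$, and $Q_c^{**}\subset(J_c^*)_{\rm reg}$. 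Thus for $u\in\dom J_c^*$ there is a unique $v\in\dom Q_c^{**}$ with $Q_c^{**}v=(J_c^*)_{\rm reg}u$, whence $(J_c^*)_{\rm reg}(u-v)=0$ and $u-v\in\ker(J_c^*)_{\rm reg}=\ker J_c^*=\ker(S^*-c)$; the sum is direct by the triviality established above. Since $c<\gamma\leq m(S_{{\rm K},\gamma})$, Corollary \ref{Krrein} gives $S_{{\rm K},c}\leq S_{{\rm K},\gamma}$, so the larger relation has the smaller form domain: $\dom(S_{{\rm K},\gamma}-\gamma)^{\half}\subset\dom(S_{{\rm K},c}-c)^{\half}$. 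Given $u\in\dom(S_{{\rm K},\gamma}-\gamma)^{\half}$, I split $u=v+h$ with $v\in\dom{\bar\st}(S)$ and $h\in\ker(S^*-c)$; as $\dom{\bar\st}(S)\subset\dom(S_{{\rm K},\gamma}-\gamma)^{\half}$, also $h\in\dom(S_{{\rm K},\gamma}-\gamma)^{\half}\cap\ker(S^*-c)$, which is $\{0\}$ by (ii). Therefore $u\in\dom{\bar\st}(S)$, the domains coincide, and (i) holds.

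The main obstacle is the displayed decomposition of $\dom(S_{{\rm K},c}-c)^{\half}$; once it is in hand the rest is bookkeeping with the ordering of Kre\u{\i}n extensions. I expect its proof to rest entirely on the two structural facts used above—that $Q_c^{**}$ is bounded below for $c<\gamma$, so that $\ran Q_c^{**}$ is the closed subspace $\cran Q_c$, and that $(J_c^*)_{\rm reg}$ takes values in $\cran Q_c$ by \eqref{foot}—which together permit splitting off the regular part and recognising the remainder as a kernel element.
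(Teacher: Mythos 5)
Your argument is correct, and it differs from the paper's in a substantive way: for the equivalence of (i) and (ii) the paper simply cites \cite[Corollary~5.3.10]{BHS}, whereas you give a self-contained proof inside the representing-map framework. Your key tool, the direct-sum decomposition $\dom(S_{{\rm K},c}-c)^{\half}=\dom{\bar\st}(S)\dotplus\ker(S^*-c)$ for $c<\gamma$, is exactly the form-domain analogue of the first von Neumann formula that underlies the cited result, and your derivation of it from the two structural facts --- that $Q_c^{**}$ is bounded below (hence injective with closed range $\cran Q_c$) when $c<\gamma$, and that $(J_c^*)_{\rm reg}$ takes values in $\cran Q_c$ with $\ker (J_c^*)_{\rm reg}=\ker J_c^*=\ker(S^*-c)$ --- is sound; the triviality of $\ker(S^*-c)\cap\dom{\bar\st}(S)$ via Corollary \ref{lemmon} and the lower bound $\gamma$ is also correct and gives both the directness of the sum and the implication (i)$\Rightarrow$(ii). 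The step from (ii) to (i) through the monotonicity $S_{{\rm K},c}\leq S_{{\rm K},\gamma}$ (Corollary \ref{Krrein}) and the resulting inclusion of form domains is fine. Your treatment of (ii)$\Leftrightarrow$(iii) via Corollary \ref{SKhalf} coincides with the paper's. What your route buys is independence from the external reference and an explicit exhibition of where the hypothesis $c<\gamma$ enters (closedness of $\ran Q_c^{**}$); what it costs is the extra page of work that the paper avoids by citation.
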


\begin{proof}
For the equivalence of (i) and (ii) see \cite[Corollary~5.3.10]{BHS}.

To see the equivalence of (ii) and (iii) let $c<\gamma$ and let $h\in \ker(S^*-c)$.
Then \eqref{halfKr} shows that $h \in \dom (S_{{\rm K},\gamma}-\gamma)^{\half}$
if and only if there exists $C_h<\infty$ such that
\[
 |(\varphi', h)|^2 \leq  C_h(\varphi' , \varphi) \quad \text{ for all }\; \{\varphi,\varphi'\} \in S-c.
\]
Thus, $\ker(S^*-c)\cap \dom (S_{{\rm K},\gamma}-\gamma)^{\half}=\{0\}$
if and only if \eqref{limcrit} holds.
\end{proof}

In the rest of this section it is assumed that
$S_{{\rm K},\gamma}\neq S_{\rm F}$, in which case
there are also other semibounded selfadjoint extensions $H$ of $S$
between these two extreme extensions.
In particular, in this case $\st_{S_{\rm F}}  \subsetneqq \st_{S_{{\rm K},c}}$ and
the form inclusions in \eqref{qqne} and the inequalities in \eqref{frkr+}
suggest the class of extremal extensions
that will be introduced and discussed in this section;
see Definition \ref{extr} below.
The definition of this class will be motivated by the following lemma.

\begin{lemma}\label{superheni}
Let $S \in \bLR(\sH)$ be semibounded with lower bound $\gamma \in \dR$
and let $c \leq \gamma$. Then the following statements are equivalent:
\begin{enumerate} \def\labelenumi{\rm(\roman{enumi})}
\item
$H \in \bL(\sH)$ is a selfadjoint extension of $S$
which is bounded below by $c$ and which satisfies
\begin{equation}\label{frkr}
  \st_{S_{\rm F}} \subset \st_{H} \subset \st_{S_{{\rm K},c}}.
\end{equation}
\item
$H=R_c^{*}R_c^{**}+c$ for some $R_c \in \bL(\sH, \sK_c)$ which satisfies
\begin{equation}\label{qrj+}
Q_c \subset R_c   \subset (J_c^{*})_{\rm reg}.
\end{equation}
\end{enumerate}
\end{lemma}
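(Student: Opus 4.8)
The plan is to read both implications as the passage, in opposite directions, between a closed intermediate form and a representing map, exploiting that \emph{no} minimality is assumed on representing maps, so that a given representing map may simply be \emph{restricted} to a smaller form domain (precisely the device flagged in Section~\ref{symm}). Throughout I would use that $\st_{S_{\rm F}}=\bar\st(S)$ is represented by $Q_c^{**}$ on $\dom Q_c^{**}$, that $\st_{S_{{\rm K},c}}=\ss(S)$ is represented by $(J_c^*)_{\rm reg}$ on $\dom J_c^*$, and the representation theorem \cite[Theorem~6.1 and Corollary~6.4]{HS2023b} already invoked for \eqref{fri}, which attaches to a closed form with representing map $R_c$ the selfadjoint relation $c+R_c^*R_c^{**}$.

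For (i)$\Rightarrow$(ii) I would start from the closed form $\st_H$ with lower bound $c$ satisfying $\st_{S_{\rm F}}\subset\st_H\subset\st_{S_{{\rm K},c}}$. The right inclusion gives $\dom\st_H\subset\dom J_c^*$ together with $\st_H[\varphi,\psi]-c(\varphi,\psi)=((J_c^*)_{\rm reg}\varphi,(J_c^*)_{\rm reg}\psi)$ there. The decisive move is to \emph{define} $R_c:=(J_c^*)_{\rm reg}\upharpoonright\dom\st_H$; by this identity $R_c\in\bL(\sH,\sK_c)$ is a representing map for $\st_H-c$, so the representation theorem yields $H=c+R_c^*R_c^{**}$. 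It then remains to check \eqref{qrj+}: the inclusion $R_c\subset(J_c^*)_{\rm reg}$ holds by construction, while $Q_c\subset R_c$ follows from $\dom Q_c=\dom S\subset\dom\bar\st(S)=\dom\st_{S_{\rm F}}\subset\dom\st_H=\dom R_c$ (the left form inclusion) together with $Q_c\subset(J_c^*)_{\rm reg}$ from \eqref{foot}, so that $R_c$ and $Q_c$ coincide on $\dom Q_c$.

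For (ii)$\Rightarrow$(i), given $R_c$ with $Q_c\subset R_c\subset(J_c^*)_{\rm reg}$, I would first note that $R_c$ is closable (a restriction of the closed operator $(J_c^*)_{\rm reg}$), so $R_c^{**}$ is closed and $H=c+R_c^*R_c^{**}=c+(R_c^{**})^*R_c^{**}$ is selfadjoint with lower bound $\geq c$. Passing to closures preserves the inclusions, $Q_c^{**}\subset R_c^{**}\subset(J_c^*)_{\rm reg}$ (the last already being closed), and reading these two graph inclusions at the level of the induced forms produces exactly $\st_{S_{\rm F}}=\bar\st(S)\subset\st_H\subset\ss(S)=\st_{S_{{\rm K},c}}$, i.e.\ \eqref{frkr}. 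Finally, that $H$ is a selfadjoint \emph{extension} of $S$ follows from these form inclusions as recorded after \eqref{frkr+} (citing \cite{BHS}); alternatively one verifies $S\subset H$ directly, since for $\{\varphi,\varphi'\}\in S$ one has $R_c^{**}\varphi=Q_c\varphi$ and $\{Q_c\varphi,\varphi'-c\varphi\}\in J_c\subset J_c^{**}\subset R_c^*$, whence $\{\varphi,\varphi'\}\in c+R_c^*R_c^{**}=H$.

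The main obstacle I anticipate is the nonuniqueness of representing maps: the form sandwich alone determines $\st_H$, hence $R_c$ only up to a partial isometry, and so does \emph{not} a priori deliver the \emph{graph} inclusions \eqref{qrj+}. The resolution is exactly to take $R_c$ as the literal restriction of $(J_c^*)_{\rm reg}$ rather than an abstract representing map, which forces $R_c\subset(J_c^*)_{\rm reg}$ and reduces the compatibility $Q_c\subset R_c$ to the already-established pairing $Q_c\subset(J_c^*)_{\rm reg}$ plus the domain inclusions from the form sandwich. I would also make sure to record $R_c^*=(R_c^{**})^*$, so that $R_c^*R_c^{**}$ is genuinely the canonical nonnegative selfadjoint relation attached to the closed operator $R_c^{**}$, legitimizing the use of the representation theorem in both directions.
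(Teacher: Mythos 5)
Your proof is correct and follows essentially the same route as the paper: for (i)$\Rightarrow$(ii) you take $R_c=(J_c^*)_{\rm reg}\uphar\dom\st_H$ exactly as the paper does, and for (ii)$\Rightarrow$(i) you use the chain $S-c\subset J_cQ_c\subset R_c^*R_c^{**}$ together with $Q_c^{**}\subset R_c^{**}\subset (J_c^*)_{\rm reg}$ to read off \eqref{frkr}. The only (harmless) variation is that you obtain $\st_{S_{\rm F}}\subset\st_H$ directly from $Q_c^{**}\subset R_c^{**}$, whereas the paper invokes that this inclusion holds automatically for any semibounded selfadjoint extension of $S$.
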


\begin{proof}
(i) $\Rightarrow$ (ii)
Assume that $H$ is a semibounded selfadjoint extension of $S$,
which is bounded below by $c$,
and which satisfies $\st_{H} \subset \st_{S_{{\rm K},c}}$.
The inclusion $\st_{H} \subset \st_{S_{{\rm K},c}}$ means that
\[
 \st_{H}[\varphi, \psi]
 =c(\varphi, \psi)+((J_c^{*})_{\rm reg}\varphi, (J_c^{*})_{\rm reg}\psi),
\quad \varphi, \psi \in \dom \st_H \subset \dom J_c^*,
\]
see \eqref{kren2}.
Then the operator $R_c$ be defined by $R_c =(J_c^{*})_{\rm reg} \uphar \dom \st_{H}$
is a representing map for the closed form $\st_H$ which extend the representing map
$Q_c$ in \eqref{forrep}.
In particular, $R_c$ is a closed operator and $H=c+R_c^*R_c$.
Thus \eqref{qrj+} has been shown.

(ii) $\Rightarrow$ (i) It follows from \eqref{s-JQQ} and \eqref{qrj+} that
\[
S-c \subset J_cQ_c \subset R_c^{*}R_c  \subset R_c^{*}R_c^{**},
\]
and, hence,  $H=c+R_c^{*}R_c^{**}$
is a semibounded selfadjoint extension of $S$.
It is clear from the construction that $R_c^{**} \subset (J_c^*)_{\rm reg}$,
so that $\st(H) \subset \st(S_{{\rm K},c})$.
Since $H$ is a semibounded selfadjoint extension of $S$ it follows
automatically that $\st(S_{\rm F}) \subset \st(H)$.
Therefore, one sees \eqref{frkr}.
\end{proof}

The next lemma is helpful in the discussion of extremal extensions
in Definition \ref{extr}.
Recall from \eqref{hihi} that for a semibounded selfadjoint relation
$H \in \bL(\sH)$ one has $\st(H) \subset \bar\st(H)=\st_H$.

\begin{lemma}\label{coro}
Let $S \in \bLR(\sH)$ be semibounded with lower bound $\gamma \in \dR$.
Assume that $c \leq \gamma$ and let $H \in \bL(\sH)$
be a semibounded selfadjoint extension of $S$
which is bounded below by $c$. Then there are the following statements.
\begin{enumerate}\def\labelenumi{\rm(\alph{enumi})}
\item
For $\{h,h'\} \in S$, $\{f,f'\} \in H$, and  $f \in \dom J_c^*$ one has
 \begin{equation}\label{tennis0}
\begin{split}
(\st(H)-c)[f-h, f-h]&-\|(J_c^*)_{\rm op} f-Q_c h\|^2 \\
& \hspace{1cm}=(\st(H)-c)[f,f]- \| (J_c^*)_{\rm reg} f\|^2.
\end{split}
\end{equation}
\item
For $\{f,f'\} \in H$ and $\{h,h'\} \in S$ one has
\begin{equation}\label{tennis1}
(\st(H)-c)[f-h, f-h] \geq  \|(J_c^*)_{\rm reg} f-Q_c h\|^2.
\end{equation}
\item
If $\st_H\subset \st_{S_{{\rm K},c}}$,
then for $\{f,f'\} \in H$ and $\{h,h'\} \in S$:
\begin{equation}\label{tennis2}
(\st(H)-c)[f-h, f-h]= \|(J_c^*)_{\rm reg} f-Q_c h\|^2.
\end{equation}
\end{enumerate}
 \end{lemma}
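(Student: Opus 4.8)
The plan is to prove (a) by a ``complete the square'' expansion of the nonnegative Hermitian form $(\st(H)-c)[\cdot,\cdot]$ (nonnegative because $H\geq c$) on the difference $f-h$, and then to obtain (b) and (c) by controlling the single remaining diagonal term through the extremality of $S_{{\rm K},c}$. Throughout I read $(J_c^*)_{\rm op}$ in \eqref{tennis0} as the operator (regular) part $(J_c^*)_{\rm reg}$ of the closed relation $J_c^*$.

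First I would write
\begin{equation*}
(\st(H)-c)[f-h,f-h]=(\st(H)-c)[f,f]-(\st(H)-c)[f,h]-(\st(H)-c)[h,f]+(\st(H)-c)[h,h].
\end{equation*}
The diagonal term in $h$ is immediate: since $\{h,h'\}\in S$ and $\st(H)$ agrees with $\st(S)$ on $\dom S$, one has $(\st(H)-c)[h,h]=(h'-ch,h)=\|Q_ch\|^2$ by \eqref{nuk}. The only nontrivial point is the evaluation of the two cross terms. Using $\{f,f'\},\{h,h'\}\in H$ and symmetry I would write $(\st(H)-c)[f,h]=(f,h'-ch)$, and then invoke the dual pairing between $J_c$ and $J_c^*$: because $\{Q_ch,\,h'-ch\}\in J_c$ by \eqref{s-J} and $f\in\dom J_c^*$, the adjoint relation yields $(f,h'-ch)=(J_c^*f,Q_ch)$. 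Since $\mul J_c^*=(\ran Q_c)^\perp$ by \eqref{jjqc}, the multivalued ambiguity in $J_c^*f$ is orthogonal to $Q_ch\in\ran Q_c$, so this collapses to $((J_c^*)_{\rm reg}f,Q_ch)$; the conjugate value holds for $(\st(H)-c)[h,f]$. Substituting and recognizing $((J_c^*)_{\rm reg}f,Q_ch)+(Q_ch,(J_c^*)_{\rm reg}f)=2\RE((J_c^*)_{\rm reg}f,Q_ch)$ together with the expansion $\|(J_c^*)_{\rm reg}f-Q_ch\|^2=\|(J_c^*)_{\rm reg}f\|^2-2\RE((J_c^*)_{\rm reg}f,Q_ch)+\|Q_ch\|^2$, the identity \eqref{tennis0} drops out after cancellation.

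For (b) it then suffices to show that the right-hand side of \eqref{tennis0}, namely $(\st(H)-c)[f,f]-\|(J_c^*)_{\rm reg}f\|^2$, is nonnegative. Here I would use that $H$ is a selfadjoint extension of $S$ with $H\geq c$, so Corollary~\ref{Krrein} gives $\st_{S_{{\rm K},c}}\leq\st_H$. In particular $\dom\st_H\subseteq\dom\st_{S_{{\rm K},c}}=\dom J_c^*$, which is exactly what guarantees $f\in\dom J_c^*$ for $\{f,f'\}\in H$ so that (a) applies, and moreover $\|(J_c^*)_{\rm reg}f\|^2=(\st_{S_{{\rm K},c}}-c)[f,f]\leq(\st_H-c)[f,f]=(\st(H)-c)[f,f]$ for every $f\in\dom H$, using $\st(H)\subset\st_H$. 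Hence the right-hand side of \eqref{tennis0} is $\geq0$, and \eqref{tennis1} follows.

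Finally, for (c) the additional hypothesis $\st_H\subset\st_{S_{{\rm K},c}}$ forces $\st_H$ and $\st_{S_{{\rm K},c}}$ to coincide on $\dom\st_H$; combined with the inequality from (b) this gives $(\st(H)-c)[f,f]=(\st_H-c)[f,f]=\|(J_c^*)_{\rm reg}f\|^2$ for $\{f,f'\}\in H$, so the right-hand side of \eqref{tennis0} vanishes and \eqref{tennis2} is immediate from (a). I expect the only genuine obstacle to be the cross-term computation in (a): orienting the adjoint pairing correctly and justifying the passage from $J_c^*f$ to $(J_c^*)_{\rm reg}f$ via the orthogonality $\mul J_c^*\perp\ran Q_c$. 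Once that identity is secured, (b) and (c) are purely a matter of invoking extremality and the assumed form inclusion.
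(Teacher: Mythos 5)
Your proof is correct and takes essentially the same route as the paper's: the same expansion of $(\st(H)-c)[f-h,f-h]$, the same evaluation of the cross terms via the pairing $\{Q_ch,h'-ch\}\in J_c$, $f\in\dom J_c^*$ (with the multivalued part of $J_c^*$ killed by $\mul J_c^*=(\ran Q_c)^\perp$), and the same use of Corollary~\ref{Krrein} and the definition of $\ss(S)$ for parts (b) and (c). Your reading of $(J_c^*)_{\rm op}$ as $(J_c^*)_{\rm reg}$ is the intended one.
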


\begin{proof}
(a) Since $\{h,h'\} \in S$ there is the identity
\begin{equation}\label{2c}
(h'-ch, h)=(Q_c h, Q_c h).
\end{equation}
The assumption $f \in \dom J_c^*$
implies $\{f,(J_c^*)_{\rm reg} f\} \in J_c^*$.
Recall that for $\{h,h'\} \in S$ one has $\{Q_c h, h'-ch\} \in J_c$.
Hence, one obtains
\begin{equation}\label{3c}
 (h'-ch , f)=(Q_c h, (J_c^*)_{\rm reg} f).
\end{equation}
Since $\{f,f'\} \in H \subset S^*$ and $\{h,h'\} \in S$,
one has $(f',h)=(f,h')$, so that by \eqref{3c},
\begin{equation}\label{4c}
 (f'-cf, h)=(f, h'-ch)=((J_c^*)_{\rm reg} f, Q_c h).
\end{equation}
Thus, it follows from
\eqref{2c}, \eqref{3c}, and \eqref{4c} that
\[%begin{equation*}\label{hh00}
\begin{split}
&(\st(H)-c)[f-h, f-h]  \\
&\hspace{1cm}
=(f'-cf -(h'-ch), f-h) \\
&\hspace{1cm}
=(f'-cf, f)-(f'-cf, h)-(h'-ch, f)+(h'-ch, h) \\
&\hspace{1cm} =(f'-cf, f) - ((J_c^*)_{\rm reg} f, Q_c h)
                               -(Q_c h, (J_c^*)_{\rm reg} f) + (Q_c h, Q_c h)\\
&\hspace{1cm}=(f'-cf, f)-((J_c^*)_{\rm reg} f, (J_c^*)_{\rm reg} f)
                           + \|(J_c^*)_{\rm reg} f-Q_c h\|^2, \\
\end{split}
\]%end{equation*}
which gives the identity \eqref{tennis0}, since $(\st(H)-c)[f,f]=(f'-cf,f)$.

(b) and (c) Since $c\leq m(H)$, Corollary \ref{Krrein} shows
that $\st_{S_{{\rm K},c}} \leq \st_H$.
Then one sees that $\{f,f'\} \in H$ implies that $f \in \dom S_{{\rm K},c}=\dom J_c^*$ and
\begin{equation*}%\label{1c0}
  ((J_c^*)_{\rm reg}f, (J_c^*)_{\rm reg}f)=(\st(S_{{\rm K},c})-c)[f,f] \leq (\st(H)-c)[f,f],
\end{equation*}
with equality if $\st_{H} \subset \st_{S_{{\rm K},c}}$.
 Thus  \eqref{tennis1} and \eqref{tennis2} follow from \eqref{tennis0}.
 \end{proof}

The identity \eqref{tennis2}  in Lemma  \ref{coro} is the basis for the following definition.

\begin{definition}\label{extr}
Let $S \in \bLR(\sH)$ be semibounded with lower bound $\gamma \in \dR$
and let $c \leq \gamma$.
A selfadjoint extension $H \in \bL(\sH)$ of $S$ is called
\textit{extremal} with respect to $c$ if for all $\{f,f'\} \in H$ one has
\begin{equation}\label{extremal}
\inf \big\{ (f'-h'-c(f-h), f-h ) :\, \{h, h' \} \in S \big\}=0.
\end{equation}
\end{definition}

Let $H$ be a selfadjoint extension of $S$
which is extremal with respect to $c$.
Then it is clear from the definition that
for all $\{f,f'\} \in H$ and $\{h,h'\} \in S$ one has
\[
 (f'-h'-c(f-h), f-h) \geq 0.
\]
By taking $\{h,h'\}=\{0,0\}$, it follows
that $H$ is automatically bounded below by $c$.
Thus one may write \eqref{extremal} in the equivalent form
\begin{equation*}%\label{extremall}
\inf \big\{\, (\st(H)-c)[f-h, f-h] :\, \{h, h' \} \in S \big\}=0.
\end{equation*}

\begin{theorem}\label{extrr}
Let $S \in \bL(\sH)$ be a semibounded relation
with lower bound $\gamma \in \dR$ and let $c \leq \gamma$.
Then the following statements are equivalent:
\begin{enumerate} \def\labelenumi{\rm(\roman{enumi})}
\item
$H \in \bL(\sH)$ is a semibounded selfadjoint extension
of $S$ which is bounded below by  $c$
and which satisfies  \eqref{frkr};
\item
$H \in \bL(\sH)$ is a  semibounded selfadjoint extension
of $S$ which is extremal with respect to $c$.
\end{enumerate}
\end{theorem}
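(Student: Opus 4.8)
The plan is to derive both implications straight from Lemma~\ref{coro}, whose parts (a) and (c) are tailored to the two directions. The one structural fact I will use repeatedly is that $(J_c^*)_{\rm reg}=(I-\pi)J_c^*$ maps into $(\mul J_c^*)^\perp=\cran Q_c$ by \eqref{jjqc}, so that $\ran (J_c^*)_{\rm reg}\subset\cran Q_c$, while $\{Q_c h:\{h,h'\}\in S\}=\ran Q_c$ because $\dom Q_c=\dom S$ by \eqref{forrep}. Consequently, for every $f\in\dom J_c^*$ one has $\dist((J_c^*)_{\rm reg}f,\ran Q_c)=0$, i.e. $(J_c^*)_{\rm reg}f$ is a limit of vectors $Q_c h$ with $\{h,h'\}\in S$.

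For (i)$\Rightarrow$(ii) I would argue as follows. Since $\st_H\subset\st_{S_{{\rm K},c}}$ and $c\leq H$, the equality \eqref{tennis2} of Lemma~\ref{coro}(c) holds for all $\{f,f'\}\in H$ and $\{h,h'\}\in S$. Taking the infimum over $\{h,h'\}\in S$ and invoking the density fact above gives
\[
\inf\big\{(\st(H)-c)[f-h,f-h]:\{h,h'\}\in S\big\}
=\inf_{\{h,h'\}\in S}\|(J_c^*)_{\rm reg}f-Q_c h\|^2
=\dist((J_c^*)_{\rm reg}f,\ran Q_c)^2=0 .
\]
This is precisely the extremality condition \eqref{extremal}, so the semibounded selfadjoint extension $H$ is extremal with respect to $c$.

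For (ii)$\Rightarrow$(i) I would first note that extremality forces $H$ to be bounded below by $c$ (put $\{h,h'\}=\{0,0\}$ in \eqref{extremal}), whence $c\leq H$ and, by Corollary~\ref{Krrein}, $\st_{S_{{\rm K},c}}\leq\st_H$; in particular $\dom\st_H\subset\dom J_c^*$. The left inclusion $\st_{S_{\rm F}}\subset\st_H$ in \eqref{frkr} is automatic, since $S\subset H$ gives $\st(S)\subset\st(H)$ and passing to closures yields $\st_{S_{\rm F}}=\bar{\st}(S)\subset\bar{\st}(H)=\st_H$, cf. \eqref{hihi}. To obtain the right inclusion I would exploit that the right-hand side of \eqref{tennis0} in Lemma~\ref{coro}(a) is independent of $h$: taking the infimum over $\{h,h'\}\in S$ in \eqref{tennis0}, using extremality for the first term on the left and $\dist((J_c^*)_{\rm reg}f,\ran Q_c)=0$ for the subtracted term, one is left with $(\st(H)-c)[f,f]=\|(J_c^*)_{\rm reg}f\|^2=(\st(S_{{\rm K},c})-c)[f,f]$ for every $\{f,f'\}\in H$. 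Thus the two closed forms $\st_H$ and $\st_{S_{{\rm K},c}}$ coincide on $\dom H$.

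It remains to upgrade this agreement on $\dom H$ to the form inclusion $\st_H\subset\st_{S_{{\rm K},c}}$, and this is the only delicate point. I would use that $\dom H$ is a core for $\st_H=\bar{\st}(H)$: given $\varphi\in\dom\st_H$, pick $f_n\in\dom H$ with $f_n\to\varphi$ in the $\st_H$-norm; since $\st_{S_{{\rm K},c}}\leq\st_H$, the sequence $(f_n)$ is also $\st_{S_{{\rm K},c}}$-Cauchy, so closedness of $\st_{S_{{\rm K},c}}$ together with the established equality on $\dom H$ propagates $\st_H[\varphi]=\st_{S_{{\rm K},c}}[\varphi]$ to the limit. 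Polarization then gives $\st_H\subset\st_{S_{{\rm K},c}}$, which combined with the left inclusion yields \eqref{frkr} and completes (i). Everything apart from this closing core/closure argument is a direct substitution into the identities of Lemma~\ref{coro}.
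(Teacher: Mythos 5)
Your argument follows the paper's own proof almost verbatim: both directions rest on Lemma~\ref{coro}, direction (i)$\Rightarrow$(ii) on the identity \eqref{tennis2} together with the density of $\ran Q_c$ in $(\mul J_c^*)^\perp$, and direction (ii)$\Rightarrow$(i) on the $h$-independence of the right-hand side of \eqref{tennis0}; your closing core/closure argument is just an explicit version of the fact, used tacitly in the paper, that an inclusion of closable forms persists under taking closures.

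The one step you should tighten is the sentence where you ``take the infimum over $\{h,h'\}\in S$ in \eqref{tennis0}, using extremality for the first term and $\dist((J_c^*)_{\rm reg}f,\ran Q_c)=0$ for the subtracted term.'' As written this reads as $\inf_h\bigl(A(h)-B(h)\bigr)=\inf_h A(h)-\inf_h B(h)$, which is not valid in general: you need a \emph{single} sequence $\{h_n,h_n'\}\in S$ along which both terms tend to zero, and the density of $\ran Q_c$ only produces a sequence adapted to the second term. The repair is immediate and is exactly what the paper does: by \eqref{tennis1} one has $0\leq \|(J_c^*)_{\rm reg}f-Q_ch\|^2\leq(\st(H)-c)[f-h,f-h]$, so the extremality sequence from \eqref{extremal} forces both terms to zero simultaneously, whence the constant right-hand side of \eqref{tennis0} vanishes. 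With that substitution your proof is complete and coincides with the one in the paper.
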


\begin{proof}
(i) $\Rightarrow$ (ii)
This follows from \eqref{tennis2} in Lemma \ref{coro}
by observing that $\ran Q_c$ is dense in $(\mul J_c^*)^\perp$.

(ii) $\Rightarrow$ (i)
Let $H \in \bL(\sH)$ be a selfadjoint extension of $S$
which is extremal with respect to $c$.
Then $H$ is bounded below by $c$ and by Corollary \ref{Krrein}
it follows that $\st_{S_{{\rm K},c}} \leq \st_{H}$.
Therefore one may apply \eqref{tennis1} in Lemma \ref{coro}
with $\{f,f'\} \in H$ and $\{h,h'\} \in S$:
\begin{equation*}%\label{tennis111}
(f'-h'-c(f-h), f-h ) \geq  \|(J_c^*)_{\rm reg} f-Q_c h\|^2.
\end{equation*}
Therefore, both sides of the identity \eqref{tennis0} are nonnegative
for all $\{f,f'\} \in H$ and $\{h,h'\} \in S$.
Note that the element $\{h,h'\} \in S$
appears only in the left-hand side of \eqref{tennis0}.

Now let $\{f,f'\} \in H$ and choose $\epsilon >0$.
Since $H$ is extremal, it follows from \eqref{extremal} that
there exists $\{h,h'\} \in S$ such that.
\begin{equation}\label{hh0}
   (f'-h'-c(f-h), f-h) < \epsilon.
\end{equation}
From  \eqref{hh0} one concludes for the left-hand side of \eqref{tennis0}
\begin{equation*}
\begin{split}
& 0 \leq (\st(H)-c)[f-h, f-h] - \|(J_c^*)_{\rm reg} f-Q_c h\|^2 \\
& \hspace{4.2cm} \leq (\st(H)-c)[f-h, f-h] < \epsilon.
\end{split}
\end{equation*}
Therefore, it follows from the right-hand side of \eqref{tennis0}
 that for any $\{f,f'\} \in H$ and any $\epsilon >0$:
 \[
 0 \leq (\st(H)-c)[f,f]- (\st(S_{{\rm K},c})-c)[f, f] < \epsilon.
\]
Hence $ \st(H)[f,f]=\st(S_{{\rm K},c})[f, f]$, $\{f,f'\} \in H$,
and by polarization one obtains
\[
 \st(H)[f,g]=\st(S_{{\rm K},c})[f, g], \quad \{f,f'\}, \{g,g'\} \in H.
\]
Since $\dom  \st(H) \subset \dom S_{{\rm K},c}$, this implies
that $\st(H) \subset \st(S_{{\rm K},c})$ and,
since $H$ is a semibounded selfadjoint extension of $S$ one also has
$\st(S_{\rm F}) \subset \st(H)$. Thus \eqref{frkr} is satisfied.
\end{proof}

Definition \ref{extr} goes back to Arlinski\u{\i} and Tsekanovski\u{\i} \cite{ArTs88}, see also \cite{A88,Ar3}.
The results in this section are based on \cite{AHSS,HSSW2007}, where also
an earlier form of Theorem \ref{extrr} can be found.

\section{Symmetric relations whose domain and range are orthogonal}\label{ortho}

Let $S \in \bL(\sH)$ be a linear relation.
Then the \textit{numerical range} $\cW(S)$ of $S$ is defined by
\[
\cW(S) = \big\{(\varphi', \varphi):\, \{\varphi,\varphi'\}\in S:\, \|\varphi\| = 1\,\big\} \subset \dC,
\]
when $\dom S \neq \{0\}$, and by $\{0\}\subset\dC$
if $\dom S = \{0\}$, i.e., if $S$ is purely multivalued.
For linear relations $S \in \bL(\sH)$ the numerical range
$\cW(S)$ is a convex set; see \cite[Proposition~2.18]{HdSSz09}.
Moreover, it is clear that all eigenvalues in $\dC$ of $S$ belong
to $\cW(S)$.
Clearly, the numerical range of the inverse of $S$ is given by
\[
\cW(S^{-1}) = \{ \lambda\in \dC: \overline\lambda \in \cW(S)\,\}.
\]
The interest in this section is in linear relations $S \in \bL(\sH)$
for which $\cW(S)=\{0\}$; this class was discussed earlier in \cite{C}.
Here is a characterization of it; see \cite{HLS} and also \cite{RC}.

\begin{lemma}\label{numranzero}
For every linear relation $S \in \bL(\sH)$ the following statements are equivalent:
\begin{enumerate} \def\labelenumi{\rm(\roman{enumi})}
\item $\cW(S)=\{0\}$;
\item $\dom S\perp \ran S$.
\end{enumerate}
\end{lemma}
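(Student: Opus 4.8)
The plan is to strip condition (i) of its normalization and then recognize the nontrivial implication as a polarization identity. First I would observe that, by homogeneity of $S$, the condition $\cW(S)=\{0\}$ is equivalent to
\[
 (\varphi',\varphi)=0 \quad \text{for all } \{\varphi,\varphi'\}\in S.
\]
Indeed, for $\varphi\neq 0$ one replaces $\{\varphi,\varphi'\}$ by $\{\varphi,\varphi'\}/\|\varphi\|\in S$, which has unit first component and multiplies the value $(\varphi',\varphi)$ by the positive factor $1/\|\varphi\|^2$; and for $\varphi=0$ the value $(\varphi',0)$ vanishes automatically. So the normalization $\|\varphi\|=1$ in the definition of $\cW(S)$ costs nothing.

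The implication (ii) $\Rightarrow$ (i) is then immediate: if $\dom S\perp\ran S$, then for any $\{\varphi,\varphi'\}\in S$ one has $\varphi\in\dom S$ and $\varphi'\in\ran S$, whence $(\varphi',\varphi)=0$ and thus $\cW(S)=\{0\}$.

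For the reverse implication (i) $\Rightarrow$ (ii) I would exploit that $S$ is a linear relation. Define on $S$ the map $[\{\varphi,\varphi'\},\{\psi,\psi'\}]:=(\varphi',\psi)$; since the projections $\{\varphi,\varphi'\}\mapsto\varphi$ and $\{\varphi,\varphi'\}\mapsto\varphi'$ are linear and the inner product is sesquilinear, this is a well-defined sesquilinear form on $S\times S$ whose diagonal is exactly $(\varphi',\varphi)$. Condition (i), in the reformulated form above, says precisely that this diagonal vanishes identically. Because $\sH$ is a complex Hilbert space, the polarization identity recovers the full form from its diagonal, so the form is identically zero; concretely, evaluating $[\{\varphi,\varphi'\}+\lambda\{\psi,\psi'\},\{\varphi,\varphi'\}+\lambda\{\psi,\psi'\}]$ for $\lambda\in\{1,-1,i,-i\}$ and using $\{\varphi+\lambda\psi,\varphi'+\lambda\psi'\}\in S$ yields $(\varphi',\psi)=0$ for all $\{\varphi,\varphi'\},\{\psi,\psi'\}\in S$. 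Since $\varphi'$ ranges over all of $\ran S$ and $\psi$ over all of $\dom S$ independently, this is exactly the assertion $\dom S\perp\ran S$.

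The only point requiring care, and the one I would single out as the crux, is this reduction step together with the use of complex polarization: the argument genuinely needs that $S$ be closed under the complex linear combinations $\{\varphi+\lambda\psi,\varphi'+\lambda\psi'\}$, which is guaranteed since $S\in\bL(\sH)$ is a linear relation, and it uses the full four-term polarization over $\dC$ rather than merely a real symmetric identity. Notably, no closedness, density, symmetry, or single-valuedness of $S$ enters anywhere.
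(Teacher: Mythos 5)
Your proof is correct. Note that the paper itself gives no proof of Lemma \ref{numranzero}; it simply cites \cite{HLS} and \cite{RC}. Your argument --- reducing $\cW(S)=\{0\}$ by homogeneity to the vanishing of $(\varphi',\varphi)$ on all of $S$, and then applying complex polarization to the sesquilinear form $\bigl[\{\varphi,\varphi'\},\{\psi,\psi'\}\bigr]=(\varphi',\psi)$ on the linear subspace $S\subset\sH\times\sH$ --- is the standard self-contained argument one would expect to find in those references, and you correctly identify the two points that matter: that $S$ being a linear relation makes the form sesquilinear on a complex vector space, and that the four-term polarization over $\dC$ (not just a real symmetric identity) is what forces the off-diagonal values $(\varphi',\psi)$ to vanish, with $\varphi'$ and $\psi$ ranging independently over $\ran S$ and $\dom S$.
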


Now assume that $\cW(S)=\{0\}$. Then the linear relation $S$ is nonnegative
and the form $\st(S) \in \bF(\sH)$, determined by $S$ in \eqref{s-ip},
is trivial by Lemma \ref{numranzero}:
\[
 \st(S)[\varphi,\psi]=(\varphi',\psi)=0, \quad \{\varphi, \varphi'\},\, \{\psi, \psi'\}\in S.
\]
Thus $\st(S) \in \bF(\sH)$ is the null form defined on $\dom S$:
\[
 \st(S)=0, \quad \dom \st(S)=\dom S.
\]
This shows that $\gamma=0$ and, therefore, in \eqref{forrep}
one may choose $c=0$ and a representing map $Q \in \bL(\sH)$ by
\begin{equation}\label{q9}
Q=\dom S  \times \{0\}.
\end{equation}
Clearly,  the corresponding companion relation $J \in \bL(\sH)$, given by \eqref{s-J}, reduces to
\[%begin{equation}\label{s-J9}
J=\big\{\, \{ 0, \varphi' \} :\, \{\varphi, \varphi'\} \in S  \,\big\} =\{0\} \times \ran  S,
\]%end{equation}
so that
\[
 J^*=(\ran S)^\perp \times \sH=\ker S^* \times \sH.
\]
Since $\mul J^*=\sH$, it follows that
\[
(J^*)_{\rm reg}=\ker S^* \times \{0\}.
\]
Hence the form $\ss(S)$ defined in \eqref{ss} is given by
\[%begin{equation}\label{ss9}
 \ss(S)[\varphi, \psi]=0,  \quad \varphi, \psi \in \dom \ss(S)=\ker S^*.
\]%end{equation}
Note that  \eqref{s-JQ} implies $\dom S \subset \ker S^*$ and $\ran S \subset \mul S^*$,
which can be seen immediately from the assumption $\cW(S)=\{0\}$.
It is clear from these observations that $\st(S) \subset \ss(S)$.

\begin{lemma}\label{nie}
Let $S \in \bL(\sH)$ be a linear relation such that $\cW(S)=\{0\}$.
Then the Friedrichs extension of $S$ is given by
\begin{equation}\label{frie}
S_{\rm F}=\cdom S \times \mul S^*=\cker S \times \mul S^*,
\end{equation}
and the Kre\u{\i}n type extension $S_{\rm K}=S_{{\rm K},0}$ is given by
\begin{equation}\label{krei}
S_{\rm K}=\ker S^* \times \cran S=\ker S^* \times \cmul S.
\end{equation}
\end{lemma}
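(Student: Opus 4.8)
The plan is to read both extensions off the explicit formulas \eqref{fri} and \eqref{kren2}, specialised to $c=0$ and to the concrete data already computed above the lemma: the representing map $Q=\dom S\times\{0\}$ from \eqref{q9}, its companion $J=\{0\}\times\ran S$, the adjoint $J^*=\ker S^*\times\sH$, and the orthogonal operator part $(J^*)_{\rm reg}=\ker S^*\times\{0\}$. Since $Q$ and $J$ are here simple product relations, all the closures and adjoints entering \eqref{fri} and \eqref{kren2} can be computed by inspection, and forming the two compositions will immediately produce the first equality in each of \eqref{frie} and \eqref{krei}.

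For the Friedrichs extension I would argue as follows. As $Q=\dom S\times\{0\}$ sends its entire domain to $0$, its closure is $Q^{**}=\cdom S\times\{0\}$, while $\mul Q^*=(\dom Q)^\perp=(\dom S)^\perp=\mul S^*$ together with $\dom Q^*=\sH$ gives $Q^*=\sH\times\mul S^*$. Substituting into $S_{\rm F}=Q^*Q^{**}$ from \eqref{fri} and composing (the intermediate element is forced to be $0$) yields $S_{\rm F}=\cdom S\times\mul S^*$. For the Kre\u{\i}n type extension the same bookkeeping applies: $J=\{0\}\times\ran S$ has closure $J^{**}=\{0\}\times\cran S$, and with $J^*=\ker S^*\times\sH$ the formula $S_{\rm K}=J^{**}J^*$ from \eqref{kren2} (at $c=0$) gives $S_{\rm K}=\ker S^*\times\cran S$. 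This settles the first equality in \eqref{frie} and in \eqref{krei}.

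It then remains to pass from $\cdom S$ to $\cker S$ and from $\cran S$ to $\cmul S$. One inclusion in each case is free, since $\ker S\subset\dom S$ and $\mul S\subset\ran S$ always hold, whence $\cker S\subset\cdom S$ and $\cmul S\subset\cran S$. The substance lies in the reverse inclusions $\dom S\subset\cker S$ and $\ran S\subset\cmul S$, and this is precisely where the hypothesis $\cW(S)=\{0\}$, i.e. $\dom S\perp\ran S$ by Lemma~\ref{numranzero}, must be brought to bear. I expect this identification to be the main obstacle: unlike the first equalities, which are a purely formal composition of the product relations $Q^{**},Q^*$ and $J^{**},J^*$, the equalities $\cdom S=\cker S$ and $\cran S=\cmul S$ genuinely require the orthogonality of domain and range to force every element of $\dom S$ (respectively $\ran S$) to be approximable by kernel (respectively multivalued) elements. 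This is immediate when $S$ has the product shape $\dom S\times\ran S$, and isolating the correct use of the orthogonality hypothesis here is the crux of the argument.
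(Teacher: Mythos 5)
Your derivation of the first equality in each of \eqref{frie} and \eqref{krei} is correct and coincides with the paper's own proof: the paper computes $Q^{**}=\cdom S\times\{0\}$ and $Q^*=\sH\times\mul S^*$ and reads $S_{\rm F}$ off \eqref{fri}, and for $S_{\rm K}$ it uses the factorization $((J^*)_{\rm reg})^*(J^*)_{\rm reg}$ with $((J^*)_{\rm reg})^*=\sH\times\cran S$, whereas you use $J^{**}J^*$; both products are listed in \eqref{kren2} and agree by Corollary \ref{indep}, so this difference is immaterial.

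The gap is exactly where you place it, but it cannot be closed. The second equalities require $\cdom S=\cker S$ and $\cran S=\cmul S$, and these do \emph{not} follow from $\cW(S)=\{0\}$: take $\sH=\dC^2$ with orthonormal basis $e_1,e_2$ and $S=\{\{\lambda e_1,\lambda e_2\}:\lambda\in\dC\}$. Then $\dom S=\span\{e_1\}\perp\span\{e_2\}=\ran S$, so $\cW(S)=\{0\}$, yet $\ker S=\mul S=\{0\}$, so $\cker S\neq\cdom S$ and $\cmul S\neq\cran S$; here $S_{\rm F}=S_{\rm K}=\span\{e_1\}\times\span\{e_2\}$ in accordance with the first expressions, while $\cker S\times\mul S^*$ and $\ker S^*\times\cmul S$ do not even contain $S$. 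Your remark that the reverse inclusions are ``immediate when $S$ has the product shape $\dom S\times\ran S$'' is the telling point: they hold essentially only in that special case, and no use of the orthogonality hypothesis will produce them in general. For what it is worth, the paper's own proof also stops after computing the two compositions and never addresses the second equalities, which as written appear to be erroneous; the identities that do hold (and that justify the subsequent claim that $S_{\rm F}$ and $S_{\rm K}$ are singular relations) are $S_{\rm F}=\ker S_{\rm F}\times\mul S_{\rm F}$ and $S_{\rm K}=\ker S_{\rm K}\times\mul S_{\rm K}$, which follow trivially from the first expressions.
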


\begin{proof}
It follows from the representation \eqref{q9} that
\[
 Q^{**}=\cdom S \times \{0\} \quad \mbox{and} \quad Q^*= \sH \times \mul S^*.
\]
By \eqref{fri} one has $S_{\rm F}=Q^*Q^{**}$, which gives \eqref{frie}.
 Likewise, one has
\[
((J_c^*)_{\rm reg})^*=\sH \times \cran S.
 \]
By \eqref{kren2} one has $S_{\rm K}= ((J_c^*)_{\rm reg})^* (J_c^*)_{\rm reg}$,
which gives \eqref{krei}.
\end{proof}

Thus one sees that both nonnegative selfadjoint extensions $S_{\rm F}$ and $S_{\rm K}$
are singular relations.
It follows from Lemma \ref{nie} that $S_{\rm F}=S_{\rm K}$ if and only if
\[
 \ker S^*=\cdom S \quad \mbox{and} \quad \mul S^*=\cran S.
\]
Furthermore,
\[
 S_{\rm F} \cap S_{\rm K}= \cker S \times \cran S,
\]
which implies that $S_{\rm F} \cap S_{\rm K}=\overline{S}$.
In addition, observe that the description in Corollary \ref{SFhalf}, when applied
to $S$ with $\cW(S)=\{0\}$, shows that
\[
 \ran S_{\rm F}^{\half} =(\dom S)^{\perp}=\mul S^*=\mul S_{\rm F},
\]
which agrees with \eqref{frie}. Similarly Corollary~\ref{SKhalf} yields
\[
 \dom S_{\rm K}^{\half}=(\ran S)^\perp=\ker S^*=\ker S_{\rm K},
\]
and this agrees with \eqref{krei}.

\medskip

Notice that if $\cW(S)=\{0\}$, then by Lemma \ref{nie} it follows that
$\cW(S_{\rm F})=\{0\}$ and, likewise, $\cW(S_{\rm K})=\{0\}$.
This observation gives a connection to the nonnegative extremal extensions of $S$
(i.e., extremal with respect to $0$).

\begin{proposition}
Let $S \in \bL(\sH)$ be nonnegative relation such that $\cW(S) = \{0\}$.
Let $H \in \bL(\sH)$ be a selfadjoint extension of $S$.
Then the following conditions are equivalent:
\begin{enumerate}\def\labelenumi{\rm(\roman{enumi})}
\item $\cW(H) = \{0\}$;
\item  $H$ is a nonnegative extremal extension of $S$.
\end{enumerate}
\end{proposition}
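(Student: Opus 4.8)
The plan is to run everything through the two descriptions supplied by Lemma~\ref{nie}, read off at the level of forms, together with the form characterization of extremality in Theorem~\ref{extrr}. From the computations preceding Lemma~\ref{nie} one has that both boundary forms are \emph{null forms}: $\st_{S_{\rm F}}=\bar\st(S)$ is the zero form on $\dom\st_{S_{\rm F}}=\cdom S$, while $\st_{S_{\rm K}}=\ss(S)$ is the zero form on $\dom\st_{S_{\rm K}}=\ker S^*$. Since extremality of $H$ with respect to $0$ is, by Theorem~\ref{extrr}, equivalent to $H$ being a nonnegative selfadjoint extension of $S$ with $\st_{S_{\rm F}}\subset\st_H\subset\st_{S_{\rm K}}$ as in \eqref{frkr}, the whole statement reduces to comparing $\st_H$ with two null forms.

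For the implication (ii)~$\Rightarrow$~(i) I would argue as follows. If $H$ is a nonnegative extremal extension, then Theorem~\ref{extrr} gives $\st_{S_{\rm F}}\subset\st_H\subset\st_{S_{\rm K}}$. As $\st_{S_{\rm K}}$ is the null form, the form inclusion $\st_H\subset\st_{S_{\rm K}}$ forces $\st_H$ itself to be the null form on $\dom\st_H$. By \eqref{hihi} one has $\st(H)\subset\st_H$, so $\st(H)$ vanishes identically; in particular $(\varphi',\varphi)=\st(H)[\varphi,\varphi]=0$ for every $\{\varphi,\varphi'\}\in H$, which is exactly $\cW(H)=\{0\}$.

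For the reverse implication (i)~$\Rightarrow$~(ii) I would first note that $\cW(H)=\{0\}$ makes $H$ nonnegative, and by Lemma~\ref{numranzero} it yields $\dom H\perp\ran H$. The task is then to verify the two form inclusions in \eqref{frkr}. The closure of the (null) form $\st(H)$ is the null form on $\cdom H$, so $\dom\st_H=\cdom H$ and $\st_H=0$ there; this is the representing-map computation $Q=\dom H\times\{0\}$, $Q^{**}=\cdom H\times\{0\}$ via \eqref{cst} of Proposition~\ref{cloclo}, exactly as in Lemma~\ref{nie}. For the lower inclusion, $S\subset H$ gives $\cdom S\subset\cdom H=\dom\st_H$, and since $\st_{S_{\rm F}}$ and $\st_H$ are both null they agree on $\cdom S$, so $\st_{S_{\rm F}}\subset\st_H$. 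For the upper inclusion, the orthogonality $\dom H\perp\ran H$ together with $\ran S\subset\ran H$ gives $\dom H\subset(\ran S)^\perp=\ker S^*$, hence $\cdom H\subset\ker S^*=\dom\st_{S_{\rm K}}$; as both forms are null this yields $\st_H\subset\st_{S_{\rm K}}$. Theorem~\ref{extrr} then delivers that $H$ is extremal with respect to $0$.

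The steps are all short once the reduction to null forms is made, and I do not expect a genuine obstacle. The only point needing care is the bookkeeping that $\dom\st_H=\cdom H$ and that $\st_H$ is again the null form when $\cW(H)=\{0\}$; this rests solely on applying the closure formula \eqref{cst} to the trivial representing map $\dom H\times\{0\}$. Conceptually, under $\cW(S)=\{0\}$ both extreme extensions collapse to null forms, so the extremal interval consists precisely of the null forms on closed subspaces $\cL$ with $\cdom S\subset\cL\subset\ker S^*$, and these are exactly the selfadjoint extensions $H=\cL\times\cL^\perp$ of $S$ with $\cW(H)=\{0\}$.
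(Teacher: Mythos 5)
Your proposal is correct and follows essentially the same route as the paper: both reduce the statement to the observation that $\st_{S_{\rm F}}$ and $\st_{S_{\rm K}}$ are null forms (on $\cdom S$ and $\ker S^*$ respectively) and then invoke the form characterization of extremality from Theorem~\ref{extrr}. The only cosmetic difference is in (i)~$\Rightarrow$~(ii), where the paper obtains the domain inclusion $\dom\st_H\subset\dom\st_{S_{\rm K}}$ from Corollary~\ref{Krrein}, while you derive $\cdom H\subset\ker S^*$ directly from $\dom H\perp\ran H$ and $\ran S\subset\ran H$ — an equally valid and slightly more self-contained step.
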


\begin{proof} The assumption $\cW(S)=\{0\}$ implies that $\cW(S_{\rm K})=\{0\}$.
In other words, $\dom S_{\rm K} \perp \ran S_{\rm K}$. Hence the form $\st(S_{\rm K})$
corresponding to $S_{\rm K}$ is the zero form on $\dom S_{\rm K}$.

(i) $\Rightarrow$ (ii) Assume that $\cW(H) = \{0\}$. Then clearly $\st(H)$ is the zero form
and $\st_H=0$. Since the assumption $\cW(H)=\{0\}$ implies that $\cW(S)=\{0\}$,
it follows from Lemma \ref{nie} that $\cW(S_{\rm F})=0$ and $\cW(S_{\rm K})=0$.
Thanks to $H$ being a nonnegative selfadjoint extension of $S$, Corollary \ref{Krrein}
implies that  $\st_{S_{{{\rm K}}}} \leq \st_{H}$,
which gives  $\dom \st_H \subset \dom \st_{S_{{{\rm K}}}} $.
Recall that $\st_{S_{\rm F}}  \subset \st_H $.
All these forms are null forms and one obtains
$\st_{S_{\rm F}}  \subset \st_H \subset \st_{S_{{{\rm K}}}}$.
Now Theorem \ref{extr} shows that $H$ is extremal.

(ii) $\Rightarrow$ (i) Let $H$ be an extremal extension of $S$.
Then by Theorem \ref{extr} one has $\st(H) \subset \st(S_{\rm K})$.
Hence $\st(H)$ is the zero form on $\dom \st(H)$. In particular, it follows that $\cW(H) = {0}$.
\end{proof}

\section{Symmetric relations defined by semibounded quadratic forms}\label{converse}

In this section attention is paid to the representation of a general semibounded form
$\st \in \bF(\sH)$ as given in \cite{HS2023b}. It is known that such a form induces
a semibounded relation $S_\st \in \bL(\sH)$ and a semibounded
selfadjoint relation $\wt A_\st \in \bL(\sH)$ which extends $S_\st$.
The relation $\wt A_\st$  is the Friedrichs extension of
an, in general,  nontrivial  extension of $S_\st$.
The semibounded  relation $S_\st$ itself gives rise to a semibounded form
$\st(S_\st)$, which induces a Friedrichs extension of $S_\st$.

\medskip

The general semibounded form $\st \in \bF(\sH)$ is given by
\begin{equation}\label{fortt0}
 \st[\varphi, \psi]=c(\varphi, \psi)+ (Q\varphi, Q \psi), \quad \varphi, \psi \in \dom Q,
\end{equation}
for some $c \in \dR$ and a representing map $Q \in \bL(\sH,\sK)$.
Note that the form $\st$ is closable (closed) or singular if and only if the
operator $Q$ is closable (closed) or singular, respectively.
As mentioned above, the form $\st$ induces
a semibounded selfadjoint relation $\wt A_\st \in \bL(\sH)$
and a semibounded relation $S_\st \in \bL(\sH)$, and they are given by
\begin{equation}\label{fortt1}
 \wt A_\st=c+Q^* Q^{**} \quad \mbox{and} \quad S_\st=c+Q^*Q,
\end{equation}  
which are characterized in \cite[Theorem 6.1]{HS2023b}, see also \cite[Theorem~3.2]{AtE1}. 
It follows from \eqref{fortt1} that
\[
 S_\st \subset \wt A_\st.
\]
Denote the regular
part of $Q$ by $Q_{\rm reg}$ and the regular.
part of $\st$ by $\sr$, so that
\begin{equation}\label{fortt00}
 \sr[\varphi, \psi]=c(\varphi, \psi)+ (Q_{\rm reg}\varphi, Q_{\rm reg} \psi),
 \quad \varphi, \psi \in \dom Q,
\end{equation}
is a regular (closable) form in $\bF(\sH)$; its closure is given by
\[%begin{equation}\label{fortt00+}
 \bar \sr[\varphi, \psi]=c(\varphi, \psi)+ (Q_{\rm reg}^{**}\varphi, Q_{\rm reg}^{**} \psi),
 \quad \varphi, \psi \in \dom Q^{**}.
\]%end{equation}
Then, likewise, there are
a semibounded selfadjoint relation $\wt A_\sr \in \bL(\sH)$
and a semibounded relation $S_\sr \in \bL(\sH)$, given by
\begin{equation}\label{mmainn}
S_{\sr}=c+(Q_{c,\rm reg})^*Q_{c, \rm reg}
\quad \mbox{and} \quad
\wt A_{\sr}=c+(Q_{c, \rm reg})^*(Q_{c, \rm reg})^{**},
\end{equation}
and they satisfy
\[%begin{equation}\label{mmainn+}
S_{\st} \subset S_{\sr} \quad \mbox{and} \quad
\wt A_{\st}=\wt A_{\sr}=A_{\bar \sr};
\]%end{equation}
where $A_{\bar \sr} \in \bL(\sH)$ is the semibounded selfadjoint relation
corresponding to the
closed form $\bar \sr \in \bF(\sH)$; see \cite[Theorem~6.5]{HS2023b}.
It is clear from \eqref{mmainn} that $\wt A_{\st}=\wt A_{\sr}=A_{\bar \sr}$ 
is the Friedrichs extension
of the semibounded relation $S_{\sr} \in \bL(\sH)$; see Section \ref{Fried}.

\medskip

Next consider the connection of the above facts with the constructions
in the present paper.
Let the semibounded form $\st \in \bF(\sH)$
be given by \eqref{fortt0} and let the
corresponding semibounded relation $S_\st$ be as in \eqref{fortt1}.
Then this relation $S_\st$ induces a semibounded form
$\st(S_\st) \in \bF(\sH)$ as in Definition \ref{defsss}.
First observe that the domain of $S_\st$ in \eqref{fortt1}
satisfies $\dom S_\st=\cR$,
when the linear subspace $\cR \subset \dom Q$ is defined by
\begin{equation*}
 \cR=\big\{ \varphi \in \dom Q:\, Q \varphi \in \dom Q^* \big\},
\end{equation*}
so that in general one has
\[
 \{0\} \subset \cR \subset \dom Q.
\]
Define the operator $\dot{Q} \in \bL(\sH, \sK)$
as the restriction of $Q$ to the linear subspace $\cR \subset \dom Q$:
\begin{equation*}
 \dot{Q}=Q \uphar \cR, \quad \mbox{where} \quad \dom \dot{Q}=\cR.
\end{equation*}
Then it follows directly from the definition that with $\varphi \in \cR$ one has
\[
Q\varphi \in \dom Q^* \subset (\mul Q^{**})^\perp,
\]
which shows $Q\varphi=(I-P) Q\varphi$,
when $P$ is the orthogonal projection onto $\mul Q^{**}$.
Hence, one sees the inclusion
\begin{equation}\label{ququ}
\dot{Q} \subset Q_{\rm reg},
\end{equation}
which shows that $\dot{Q} \in \bL(\sH, \sK)$ is closable.
Observe that $\dot{Q}=Q_{\rm reg}$ if and only if $\cR=\dom Q$.
Now let $\{\varphi, \varphi'\}, \{\psi, \psi'\} \in S_\st$,
so that in particular $\{Q\varphi, \varphi'\} \in Q^*$ and $\{\psi, Q\psi\} \in Q$.
As a consequence, it follows by Definition \ref{ssDef}
that the form $\st(S_\st) \in \bF(\sH)$
is given by
\begin{equation}\label{sts01}
 \st(S_\st)[\varphi, \psi] =c(\varphi, \psi)+(\dot{Q}\varphi, \dot{Q} \psi),
 \quad \varphi, \psi \in \dom \dot{Q}=\dom S_\st=\cR,
\end{equation}
which confirms that $\st(S_\st)$ is closable.
It follows from the inclusion \eqref{ququ} that
\[%begin{equation}\label{tincl}
 \st(S_\st) \subset \sr,
\]%end{equation}
and, since both forms are closable, one obtains.
$ \bar \st(S_\st) \subset \bar \st_{\sr}$.
This last inclusion implies the inequality $\bar \st_{\sr} \leq \bar \st(S_\st)$
or, equivalently,
\[
 c+Q_{\rm reg}^*Q_{\rm reg}^{**}=(S_\sr)_{\rm F} \leq (S_\st)_{\rm F} =c+\dot{Q}^* \dot{Q}^{**},
\]
see \cite{HS2023b}. This shows the ordering between the Friedrichs extension of $S_\sr$
and the Friedrichs extension of $S_\st$.

\medskip

The above discussion will be illustrated by the following extreme example
involving a nonnegative and singular form.

\begin{proposition}\label{ccorr}
Let $\st \in \bF(\sH)$ be a nonnegative singular form
 and let $Q \in \bL(\sH, \sK)$ be a singular representing map for $\st$, so that
\begin{equation*}%\label{reprq}
 \st[\varphi, \psi]=(Q \varphi, Q \psi), \quad \varphi, \psi \in \dom \st=\dom Q,
\end{equation*}
in which case $Q_{\rm reg}=0$ and $\sr=\st_{\rm reg}=0$.
Then the following relations, corresponding
to the forms $\st$ and $\sr$ in \eqref{fortt0} and \eqref{fortt00}, are singular:
\begin{equation}\label{SArel1}
   \left\{ \begin{array}{l}
    S_{\sr} =(Q_{\rm reg})^*Q_{\rm reg}=\dom Q \times (\dom Q)^\perp, \\
    \wt A_\st=(Q_{\rm reg})^*(Q_{\rm reg})^{**}=\wt A_{\sr}=\cdom Q \times (\dom Q)^\perp,
\end{array}
\right.
\end{equation}
and $S_{\sr}$ is essentially selfadjoint. Furthermore, the following relations
corresponding to the form $\st(S_\st)$ in \eqref{sts01}, are singular,
\begin{equation}\label{SArel2}
\left\{ \begin{array}{l}
S_\st  =Q^*Q=\ker Q \times (\dom Q)^\perp, \\
(S_\st)_{\rm F}=Q^*Q^{**}=\cker Q \times (\ker Q)^\perp,
\end{array}
\right.
\end{equation}
and $S_\st$ is essentially selfadjoint if and only if $\cdom Q=\cker Q$.
\end{proposition}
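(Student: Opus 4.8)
The plan is to reduce everything to adjoints and closures of zero operators on subspaces, together with the product-adjoint rule $(\cM\times\cN)^*=\cN^\perp\times\cM^\perp$ for (closed) subspaces. Throughout one has $c=0$, $Q_{\rm reg}=0$ and $\sr=0$, so $Q_{\rm reg}=\dom Q\times\{0\}$ is the zero operator on $\dom Q$. A direct computation from the definition of the adjoint gives $(Q_{\rm reg})^*=\sK\times(\dom Q)^\perp$, and since the closure of the zero operator on $\dom Q$ is the zero operator on $\cdom Q$, also $(Q_{\rm reg})^{**}=\cdom Q\times\{0\}$. Composing these relations yields $S_\sr=(Q_{\rm reg})^*Q_{\rm reg}=\dom Q\times(\dom Q)^\perp$ and $\wt A_\st=\wt A_\sr=(Q_{\rm reg})^*(Q_{\rm reg})^{**}=\cdom Q\times(\dom Q)^\perp$, which is \eqref{SArel1}. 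Each of these is a Cartesian product of subspaces, hence singular, since the operator part of its closure is the zero operator. For the essential selfadjointness of $S_\sr$ I would note that $\overline{S_\sr}=\cdom Q\times(\dom Q)^\perp=\wt A_\sr$; the product-adjoint rule shows $\cdom Q\times(\dom Q)^\perp$ is selfadjoint, so $S_\sr$ is essentially selfadjoint.

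For the relations attached to $\st(S_\st)$ the first task is the identity $S_\st=Q^*Q=\ker Q\times(\dom Q)^\perp$. Here I would first determine $\dom(Q^*Q)=\{\varphi\in\dom Q:\,Q\varphi\in\dom Q^*\}$, which is exactly the subspace $\cR$ appearing before \eqref{sts01}. The key point is that singularity forces $\ran Q\subset\mul Q^{**}=(\dom Q^*)^\perp$, so for $\varphi\in\cR$ one has $Q\varphi\in\dom Q^*\cap(\dom Q^*)^\perp=\{0\}$; thus $\cR=\ker Q$ and $\dot Q=Q\uphar\cR$ is the zero operator on $\ker Q$. For $\varphi\in\ker Q$ the intermediate element is $Q\varphi=0$, and $\{0,w\}\in Q^*$ precisely when $w\in\mul Q^*=(\dom Q)^\perp$; this gives $S_\st=\ker Q\times(\dom Q)^\perp$.

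It remains to compute the Friedrichs extension, and I would record two routes as a mutual check. Directly, $\dot Q=\ker Q\times\{0\}$ has $\dot Q^{**}=\cker Q\times\{0\}$ and $\dot Q^*=\sK\times(\ker Q)^\perp$, so by \eqref{fri} applied to the form $\st(S_\st)$ one gets $(S_\st)_{\rm F}=\dot Q^*\dot Q^{**}=\cker Q\times(\ker Q)^\perp$ (note that the representing map of $\st(S_\st)$ is $\dot Q$, not $Q$). Alternatively, since $\ker Q\subset\dom Q\perp(\dom Q)^\perp=\ran S_\st$ one has $\cW(S_\st)=\{0\}$, and Lemma~\ref{nie} gives $(S_\st)_{\rm F}=\cdom S_\st\times\mul(S_\st)^*=\cker Q\times(\ker Q)^\perp$. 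Both relations are again products of subspaces, hence singular. Finally, for the essential selfadjointness of $S_\st$ I would use $\overline{S_\st}=\cker Q\times(\dom Q)^\perp$ together with $(\cM\times\cN)^*=\cN^\perp\times\cM^\perp$: this closure is selfadjoint --- equivalently equals $(S_\st)_{\rm F}$ --- exactly when $(\dom Q)^\perp=(\ker Q)^\perp$, i.e. when $\cdom Q=\cker Q$.

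The main obstacle I expect is the bookkeeping in the second paragraph: pinning down $\dom(Q^*Q)=\ker Q$ requires the singularity input $\ran Q\subset(\dom Q^*)^\perp$ and the elementary but essential observation that $\dom Q^*\cap(\dom Q^*)^\perp=\{0\}$. The only other point demanding care is to keep $\dot Q$ and $Q$ distinct when forming the Friedrichs extension of $S_\st$, since $Q^*Q^{**}=\wt A_\st=\cdom Q\times(\dom Q)^\perp$ differs from $(S_\st)_{\rm F}=\cker Q\times(\ker Q)^\perp$ unless $\cdom Q=\cker Q$.
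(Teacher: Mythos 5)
Your proof is correct and follows essentially the same computational route as the paper's: everything is reduced to adjoints and closures of zero operators on subspaces and to products of relations of the form $\cM\times\cN$. The computations of $(Q_{\rm reg})^*$, $(Q_{\rm reg})^{**}$, of $S_\sr$ and $\wt A_\sr$, and of $Q^*Q=\ker Q\times \mul Q^*=\ker Q\times(\dom Q)^\perp$ (you use $\ran Q\subset\mul Q^{**}=(\dom Q^*)^\perp$ where the paper uses the equivalent $\dom Q^*=\ker Q^*=(\ran Q)^\perp$) all match. Where you go beyond the paper is the second line of \eqref{SArel2}: the paper's proof stops at $Q^*Q=\ker Q\times\mul Q^*$ and simply asserts that \eqref{SArel2} follows, whereas you compute $(S_\st)_{\rm F}$ explicitly from the representing map $\dot{Q}=Q\uphar\cR=\ker Q\times\{0\}$ of $\st(S_\st)$ (and cross-check it against Lemma~\ref{nie}). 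In doing so you correctly observe that the middle expression $Q^*Q^{**}$ displayed in \eqref{SArel2} equals $\cdom Q\times(\dom Q)^\perp=\wt A_\st$ rather than $\cker Q\times(\ker Q)^\perp$, so that line of the statement is consistent only if one reads $\dot{Q}^*\dot{Q}^{**}$ in place of $Q^*Q^{**}$ (in line with the formula $(S_\st)_{\rm F}=c+\dot{Q}^*\dot{Q}^{**}$ established just before the proposition). This is a genuine and worthwhile correction of a misprint that the paper's own proof glosses over; the final identities $(S_\st)_{\rm F}=\cker Q\times(\ker Q)^\perp$ and the criterion $\cdom Q=\cker Q$ for essential selfadjointness of $S_\st$ are exactly as intended.
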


\begin{proof}
Since $\st$ is singular one has $\sr=\st_{\rm reg}=0$.
Thus, equivalently,  $Q=Q_{\rm sing}$ or $Q_{\rm reg}=0$. Therefore
it is clear that
 \[
Q_{\rm reg}=\dom \st \times \{0\}, \quad
(Q_{\rm reg})^{*}=\sK \times (\dom \st)^\perp,
 \quad (Q_{\rm reg})^{**}= \cdom \st \times \{0\},
\]
from which the assertions in \eqref{SArel1} follow.

Since $Q$ is singular, recall that $\dom Q^*=\ker Q^*$.
 Let $\{f,f'\} \in Q^*Q$, so that  $\{f,\varphi\} \in Q$ and $\{\varphi,f'\} \in Q^*$
for some $\varphi \in \sK$.  In fact, $\varphi \in \ran Q \cap \ker Q^*=\{0\}$.
Thus $f \in \ker Q$ and $f'\in \mul Q^*$, which gives $\{f,f'\} \in \ker Q \times \mul Q^*$.
Therefore $Q^*Q \subset \ker Q \times \mul Q^*$. Since the reverse inclusion is obvious,
one obtains the identity $Q^*Q=\ker Q \times \mul Q^*$. Thus \eqref{SArel2} follows.
\end{proof}

As an application a Lebesgue decomposition of a form
involving nonzero regular and singular parts is given in the following example.

\begin{example}
Let $\st \in \bF(\sH)$ be the singular form from  Proposition \ref{ccorr}
and assume without loss of generality that the singular representing map
$Q \in \bL(\sH,\sK)$ for $\st$ is minimal; see \cite{HS2023b}).
 Since $\st$ is singular its lower bound satisfies $\gamma=0$, see \cite{HS2023b}.
Observe that $\st_{\rm reg}$ has representing map $Q_{\rm reg}=\dom \st \times \{0\}$,
while $\st_{\rm sing}$ has $Q$ as a representing map.
Now consider the shifted form $\st-c \in \bF(\sH)$ with $c < \gamma=0$.
Observe that
\[
 (\st-c)_{\rm reg}=\st_{\rm reg}-c =-c
  \quad \mbox{and} \quad  (\st-c)_{\rm sing}=\st_{\rm sing}=\st,
\]
see \cite[Corollary~2.3]{S3} or \cite{HS2023b}. This leads to
 the Lebesgue decomposition of the form $\st-c$:
\begin{equation}\label{A1}
 \st-c=(\st-c)_{\rm reg}+(\st-c)_{\rm sing}=-c + \st_{\rm sing}=-c + \st,
\end{equation}
see \cite{HSS2018,HS2023a,HS2023b}.
In order to determine a representing map for $\st-c$,
first define for $c< \gamma$
\begin{equation}\label{qc}
 q_c:\sH \to \sH =\cdom \st \oplus  (\dom \st)^\perp, \quad
 \varphi \to \begin{pmatrix} \sqrt{|c|} \varphi \\ 0 \end{pmatrix}, \quad \varphi \in \dom \st,
\end{equation}
so that $q_c$ is a representing map for the nonnegative form
$-c(\varphi,\psi)$, $\varphi, \psi \in \dom \st$.
It is clear that the column operator $Q_c$ defined by
\begin{equation}\label{SArell}
 Q_c=\begin{pmatrix}  q_c \\ Q \end{pmatrix}: \dom\st \to \sH \times \sK,
\end{equation}
is a representing map for $\st-c$, which in general is not minimal.
Consider the Lebesgue decomposition of $Q$: $Q=Q_{\rm reg}+Q_{\rm sing}$.
Since $Q$ is singular and $q_c$ is a bounded operator on $\dom \st$
one sees that
\[
\mul Q_c^{**}=\{0\} \times \mul Q^{**} = \{0\} \times \sK \subset \sH \times \sK,
\]
where the second identity holds by minimality of $Q$.
Hence, with $P_\sK$ the orthogonal projection from
$\sH \times \sK$ onto $\sK$, it follows that
\begin{equation}\label{A2}
Q_{c,{\rm reg}}=(I_{\sH\times\sK}-P_\sK)Q_c=q_c
 \quad \mbox{and} \quad Q_{c,{\rm sing}}=P_\sK Q_c=Q
\end{equation}
are representing maps for $(\st-c)_{\rm reg}$ and $(\st-c)_{\rm sing}$.

Next the interest is in the semibounded relations generated
by the form $\st-c$, given in terms of the
corresponding representing map $Q_c$.
It follows from the definition in \eqref{qc} that
 \begin{equation}\label{qc1}
 (Q_{c,{\rm reg}})^*=q_c^*
 =\big\{ \{\psi, \psi'\} \in \sH \times \sH :\,
 \psi' -\sqrt{|c|} \,\psi \in (\dom \st)^\perp \big\}.
\end{equation}
Therefore it is clear from \eqref{qc1} that
\begin{equation}\label{Qreg00-}
 (Q_{c,{\rm reg}})^* Q_{c,{\rm reg}}
 =q_c^*q_c=\big\{ \{\varphi, -c \varphi+\chi\} :\,
 \varphi \in \dom \st, \,\, \chi \in (\dom \st)^\perp \big\},
\end{equation}
and
\begin{equation}\label{Qreg00}
 (Q_{c,{\rm reg}})^* (Q_{c,{\rm reg}})^{**}
 =q_c^*q_c^{**}
 =\big\{ \{\varphi, -c \varphi+\chi\} :\, \varphi \in \cdom \st, \,\, \chi \in (\dom \st)^\perp \big\}.
\end{equation}
On the other hand, using \eqref{SArell} and Proposition~\ref{ccorr} one concludes that
\[
 Q_{c}^*Q_{c}=Q^*Q+q_c^*q_c=\big\{ \{\varphi, -c \varphi+\chi\} :\,
 \varphi \in \ker \st, \,\, \chi \in (\dom \st)^\perp \big\},
\]
and
\[
 Q_{c}^*Q_{c}^{**}=Q^*Q^{**}+q_c^*q_c^{**}=\big\{ \{\varphi, -c \varphi+\chi\} :\,
 \varphi \in \cker \st, \,\, \chi \in (\dom \st)^\perp \big\}.
\]
Thus one has
\[
Q_{c}^*Q_{c}=Q^*Q-c \quad \mbox{and} \quad Q_{c}^*Q_{c}^{**}=Q^*Q^{**}-c
\]
Therefore the relations $S_{\st-c}$ and $\wt A_{\st-c}$
associated with the shifted form $\st-c$ in \eqref{fortt1} are given by
\[
 S_{\st-c}=Q_{c}^*Q_{c}=Q^*Q-c, \quad  \wt A_{\st-c}=Q_{c}^*Q_{c}^{**}=Q^*Q^{**}-c,
\]
with the explicit formulas for $S_\st=Q^*Q$ and $\wt A_\st=Q^*Q^{**}$ as given in Proposition~\ref{ccorr}.
In particular, $(S_{\st-c})^{**}\subset  \wt A_{\st-c}$,
but $(S_{\st-c})^{**} \neq \wt A_{\st-c}$ whenever the form $\st$ is not a zero form on $\dom \st$.
Finally, from \eqref{SArel1}, \eqref{Qreg00-} and \eqref{Qreg00}, it is directly seen that
\[
 (Q_{c,{\rm reg}})^* (Q_{c,{\rm reg}})^{**}=q_c^*q_c^{**}
 =Q^*Q^{**}-c=\wt A_{\st-c}=(S_{\sr-c})^{**}=(S_\sr)^{**}-c.
\]   
The connection between Lebesgue decompositions of linear relations in \eqref{A2} and 
Lebesgue decompositions of semibounded forms in \eqref{A1}
is a general fact, which relies on the existence of representing maps for forms;
for details see \cite{HS2023seq, HS2023a, HS2023b}.
\end{example}


\begin{thebibliography}{33}

\bibitem{AN}
T.~Ando and K.~Nishio,
''Positive selfadjoint extensions of positive symmetric operators'',
Toh\'oku Math. J., 22 (1970), 65--75.

\bibitem{AtE1}
W. Arendt and T. ter Elst,
``Sectorial forms and degenerate differential operators'',
J. Operator Theory, 67 (2012), 33--72.

\bibitem{A88}
Yu.M. Arlinski\u{\i},
``Positive spaces of of boundary values and sectorial extensions
of nonnegative symmetric operators”,
Ukrainian Math. J., 40 (1988), 8–15.

\bibitem{Ar3}
Yu.M.~Arlinski\u{\i},
''Extremal extensions of sectorial  linear relations'',
Math. Studii, 7 (1997), 81--96.

\bibitem{AHSS}
Yu.M.~Arlinski\u{\i}, S.~Hassi, H.S.V.~de~Snoo, and Z.~Sebesty\'en,
``On the class of extremal extensions of a nonnegative operator'',
Oper. Theory: Adv. Appl. (B.~Sz.-Nagy memorial volume), 127 (2001), 41--81.
 
\bibitem{ArTs88}
Yu.M.~Arlinski\u{\i} and E.R.~Tsekanovski\u{\i}, ``Quasi selfadjoint contractive extensions of
Hermitian contractions'', Teor. Funkts., Funkts. Anal. Prilozhen, 50 (1988), 9--16.

\bibitem{BHS}
J.~Behrndt, S.~Hassi, and H.S.V.~de~Snoo,
 \textit{Boundary value problems, Weyl functions, and differential operators},
 Monographs in Mathematics, Vol. 108, Birkh\"auser, 2020.

\bibitem{C}
E.A.~Coddington,
Extension theory of formally normal and symmetric subspaces,
Mem. Amer. Math. Soc., 134, 1973.

\bibitem{CS}
E.A.~Coddington and H.S.V.~de~Snoo,
``Positive selfadjoint extensions of positive symmetric subspaces'',
Math. Z., {159} (1978), 203--214.
 
\bibitem{DS2}
N.~Dunford and J.T.~Schwartz,
\textit{Linear Operators, Part 2: Spectral Theory, Self Adjoint Operators in Hilbert Space},
John Wiley \& Sons, 1988.
 
\bibitem{F}
H. Freudenthal, 
``\"Uber die Friedrichssche Fortsetzung halbbeschr\"ankter Hermitescher Operatoren'', 
Proc. Acad. Wet. Amsterdam, 39 (1936), 832--833.
 
\bibitem{H}
S.~Hassi,
``On the Friedrichs and the Kre\u{\i}n-von Neumann extension
of nonnegative relations'',
Acta Wasaensia 122 (2004), 37--54.

\bibitem{HLS}
S.~Hassi, J.-Ph.~Labrousse, and H.S.V.~de~Snoo,
``Selfadjoint extensions of relations whose domain and range are orthogonal'',
Methods of Functional Analysis and Topology, 26 (2020),  39–62.

\bibitem{HMS}
S.~Hassi, M.M.~Malamud, and H.S.V. de~Snoo,
``On Kre\u{\i}n's extension theory of nonnegative operators'',
Math. Nachr., 274/275 (2004), 40--73.

\bibitem{HSSW2007}
S.~Hassi, A.~Sandovici, H.S.V. de~Snoo, and H.~ Winkler,
``A general factorization approach to the extension theory of nonnegative
operators and relations'',
J. Operator Theory, 58 (2007),  351--386.

\bibitem{HSSW2017}
S.~Hassi, A.~Sandovici, H.S.V. de~Snoo, and H.~ Winkler,
``Extremal maximal sectorial extensions of sectorial relations",
Indag. Math., 28 (2017), 1019--1055.

\bibitem{HSS2010}
S.~Hassi, Z.~Sebesty\'en, and H.S.V. de~Snoo,
``Domain and range descriptions for adjoint relations,
and parallel sums and differences of forms",
Oper. Theory: Adv. Appl. 198, 211--227 (2010).
 
\bibitem{HSS2018}
S.~Hassi, Z.~Sebesty\'en, and H.S.V. de~Snoo,
``Lebesgue type decompositions for linear relations and Ando's uniqueness criterion",
Acta Sci. Math. (Szeged), 84 (2018), 465--507.

\bibitem{HS2023seq}
S.~Hassi  and H.S.V. de~Snoo,
``Sequences of operators, monotone in the sense of contractive domination",
Complex Anal. Oper. Theory (to appear).

\bibitem{HS2023a}
S.~Hassi  and H.S.V. de~Snoo,
``Complementation and Lebesgue type decompositions of linear operators and relations",
J. Lond. Math. Soc. (to appear).

\bibitem{HS2023b}
S.~Hassi and H.S.V. de~Snoo,
``Representing maps for semibounded forms
and their Lebesgue type decompositions",
submitted for publication.

\bibitem{HdSSz09}
S.~Hassi, H.S.V. de~Snoo, and F.H.~Szafraniec,
``Componentwise and canonical decompositions of linear relations",
Dissertationes  Mathematicae, 465, 2009 (59 pages).
 
\bibitem{Kr1}
M.G.~Kre\u{\i}n,
“The theory of selfadjoint extensions of semibounded Hermitian operators and its applications, I” (Russian),
Mat. Sbornik, 20 (1947), 431–495.

\bibitem{Kr2}
M.G.~Kre\u{\i}n,
“The theory of selfadjoint extensions of semibounded Hermitian operators and its applications, II’ (Russian),
Mat. Sbornik, 21 (1947), 365–404.

\bibitem{Neu}
J. van Neumann,
``Allgemeine eigenwerttheorie Hermitescher funktionaloperatoren'',
Math. Ann. 102 (1929), 49--131.

\bibitem{PS}
V. Prokaj and Z. Sebesty\'en,
``On Friedrichs extensions of operators”,
Acta Sci. Math. (Szeged), 62 (1996), 243–246.
 
\bibitem{RC}
J.I.~Rios-Cangas,
``The Krein transform and semi-bounded extensions of semi-bounded linear relations'',
Advances in Operator Theory (2024) 9:9.

\bibitem{San}
A.~Sandovici,
\textit{Contributions to the extension theory in Hilbert spaces},
Dissertation, University of Groningen, 2006.

\bibitem{Seb83}
Z.~Sebesty\'en, ``On ranges of adjoint operators in Hilbert space'',
Acta Sci. Math. (Szeged) 46 (1983), 295--298.

\bibitem{SS}
Z.~Sebesty\'en and J.~Stochel,
``Restrictions of positive self-adjoint operators'',
Acta Sci. Math. (Szeged), 55 (1991), 149--154.

\bibitem{SSi}
Z.~Sebestyén and E.~Sikolya,
``On Krein-von Neumann and Friedrichs extensions'',
Acta Sci. Math., 69 (2003), 323--336.

\bibitem{ST2021}
Z.~Sebesty\'en and Z.~Tarcsay,
``On the Kre\u{\i}–von Neumann and Friedrichs extension of positive operators'',
Acta Wasaensia, 462 (2021), 165--178.

\bibitem{ST2022}
Z.~Sebesty\'en and Z.~Tarcsay,
``Extensions of positive symmetric operators and Krein's uniqueness criteria'',
arXiv, 2022.
 
\bibitem{S3}
B.~Simon,
``A canonical decomposition for quadratic forms with applications
to monotone convergence theorems'',
J. Funct. Anal., 28 (1978), 377--385.
 
\bibitem{Szy87}
W. Szyma\'nski,
``Positive forms and dilations",
Trans. Amer. Math. Soc., 301 (1987),  761--780.

\end{thebibliography}
\end{document}